\newcommand{\define}{\textbf}
\renewcommand{\setminus}{\smallsetminus}
\renewcommand{\phi}{\varphi}
\renewcommand{\tilde}{\widetilde}
\renewcommand{\hat}{\widehat}
\renewcommand{\bar}{\overline}
\renewcommand{\L}{\mathbb{L}}
\newcommand{\D}{\mathbb{D}}
\newcommand{\C}{\mathbb{C}}
\newcommand{\Q}{\mathbb{Q}}
\newcommand{\R}{\mathbb{R}}
\newcommand{\N}{\mathbb{N}}
\newcommand{\Z}{\mathbb{Z}}
\renewcommand{\P}{\mathbb{P}}
\newcommand{\A}{\mathbb{A}}
\newcommand{\K}{\mathbb{K}}
\newcommand{\cS}{\mathcal{S}}
\newcommand{\lc}{l^*}
\newcommand{\Gr}{Gr}
\newcommand{\bSigma}{\mbox{\boldmath$\Sigma$}}
\DeclareMathOperator{\Hom}{Hom}
\DeclareMathOperator{\Spec}{Spec}
\DeclareMathOperator{\ord}{ord}
\DeclareMathOperator{\bx}{Box}
\DeclareMathOperator{\pt}{pt}
\DeclareMathOperator{\lk}{lk}
\DeclareMathOperator{\Int}{Int}
\DeclareMathOperator{\Vol}{Vol}
\DeclareMathOperator{\Trop}{Trop}
\DeclareMathOperator{\prim}{prim}
\DeclareMathOperator{\Var}{Var}
\DeclareMathOperator{\init}{in}
\DeclareMathOperator{\gen}{gen}
\DeclareMathOperator{\inter}{int}
\DeclareMathOperator{\Lef}{Lef}
\DeclareMathOperator{\NP}{NP}
\DeclareMathOperator{\pr}{pr}
\DeclareMathOperator{\orb}{orb}
\DeclareMathOperator{\Sp}{sp}
\newtheorem{theorem}{Theorem}[section]
\newtheorem{lemma}[theorem]{Lemma}
\newtheorem{corollary}[theorem]{Corollary}
\newtheorem*{ntheorem}{Theorem}
\theoremstyle{definition}
\newtheorem{definition}[theorem]{Definition}
\newtheorem{remark}[theorem]{Remark}
\newtheorem{example}[theorem]{Example}
\newcommand{\excise}[1]{}
\begin{document}

\title[]{
Formulas for monodromy
}

\author{Alan Stapledon}
\email{astapldn@gmail.com}

\keywords{tropical geometry, monodromy, motivic nearby fiber,  Hodge theory, polytopes, Ehrhart theory, monodromy at infinity, Milnor fiber}
\date{}
\thanks{}

\begin{abstract}

Given a family $X$  of complex varieties degenerating over a punctured disc, one is interested in computing related invariants called the motivic nearby fiber and the refined limit mixed Hodge numbers,
both of which contain information about the induced action of monodromy on the cohomology of a fiber of $X$. Our first main result is that the motivic nearby fiber of $X$ can be computed
by first stratifying $X$ into locally closed subvarieties that are  non-degenerate in the sense of Tevelev, and then applying an explicit formula on each piece of the stratification that involves tropical geometry.
Our second main result is an explicit combinatorial formula for the refined limit mixed Hodge numbers in the case when $X$ is  a  family of non-degenerate hypersurfaces.
As an application, given a complex polynomial, then, under appropriate conditions, we give a combinatorial formula for the Jordan block structure of the action of monodromy on the cohomology of the Milnor fiber,  generalizing a famous formula of Varchenko for the associated eigenvalues. In addition, we give a formula for the Jordan block structure of the action of monodromy at infinity.

\end{abstract}

\maketitle
\tableofcontents

\section{Introduction}

Before introducing the main results of this paper, we first present an application.
Let $f(x_1, \ldots, x_n) \in \C[x_1, \ldots, x_n]$ be a complex polynomial with no constant term, and consider the induced map $f: \C^n \rightarrow \C$.
One would like to understand the singularity of $f^{-1}(0) \subseteq \C^n$ at the origin (we assume this is an isolated singularity). A classical invariant used to distinguish the singularities arising
from different polynomials is the action of monodromy on the cohomology of the associated \define{Milnor fiber}. In simple terms, this means associating to $f$ a finite, square complex matrix $M_f$ that is well-defined up to change of basis. 
We assume that $f$ is `general' in the sense that it is non-degenerate with respect to its \define{Newton polyhedron} $\Gamma_+(f)$, and also that  it is convenient (see Section~\ref{s:Milnor}).
The most famous result was proved by Varchenko in 1976 \cite{VarZeta}, who gave an explicit combinatorial formula for the eigenvalues (with multiplicity) of $M_f$, involving alternating signs of
volumes of polytopes associated to 
$\Gamma_+(f)$. Unfortunately, since $M_f$ is rarely diagonalizable, it is not, in general, determined by its eigenvalues. We are left with the problem of
giving combinatorial formulas for the numbers $J_{k,\alpha}$  of Jordan blocks of $M_f$ of size $k$ with eigenvalue $\alpha$.
Using \define{tropical geometry}, together with 
\define{weighted Ehrhart theory}, 
which involves the enumerative combinatorics of lattice points in dilates of polytopes, we settle this question:

\begin{ntheorem}[Corollary~\ref{c:jordanMilnor}]
Let  $f \in \C[x_1,\ldots, x_n]$  be a complex polynomial such that $f^{-1}(0)$ admits an isolated singularity at the origin. Assume further that $f$ is convenient and non-degenerate with respect to its Newton polyhedron
$\Gamma_+(f)$.
Then there are explicit, non-negative combinatorial formulas for the numbers $J_{k,\alpha}$, involving the weighted Ehrhart theory of polytopes associated to $\Gamma_+(f)$.
\end{ntheorem}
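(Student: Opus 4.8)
The plan is to combine three inputs: the reduction, carried out in Section~\ref{s:Milnor}, of the Milnor fibre monodromy to the limit monodromy of an auxiliary family of non-degenerate hypersurfaces; the paper's second main result, which computes the refined limit mixed Hodge numbers of such a family via weighted Ehrhart theory; and the standard linear algebra recovering Jordan-block data from a mixed Hodge structure carrying a monodromy weight filtration. I begin with the last input. Since $f^{-1}(0)$ has an isolated singularity at the origin, the reduced cohomology $\tilde H^\bullet(F)$ of the Milnor fibre $F$ is concentrated in degree $n-1$, where by Steenbrink's theory of the vanishing cohomology it carries a mixed Hodge structure together with the monodromy operator $M_f$. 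Write $M_f = M_s M_u$ for its Jordan decomposition, $N = \log M_u$, and for a root of unity $\alpha$ let $H_\alpha \subseteq \tilde H^{n-1}(F)$ be the generalised $\alpha$-eigenspace of $M_s$; the weight filtration $W_\bullet$ on $H_\alpha$ is then the monodromy weight filtration of $N$ centred at $c_\alpha$, where $c_\alpha = n-1$ if $\alpha \neq 1$ and $c_\alpha = n$ if $\alpha = 1$. The $\mathfrak{sl}_2$-decomposition of $(H_\alpha, N, W)$ identifies $J_{k,\alpha}$, the number of size-$k$ Jordan blocks with eigenvalue $\alpha$, with the dimension of the primitive part in weight $c_\alpha+k-1$, so that
\[
J_{k,\alpha} \;=\; \dim \Gr^W_{c_\alpha+k-1}H_\alpha \;-\; \dim \Gr^W_{c_\alpha+k+1}H_\alpha \;=\; \rk\bigl(N^{k-1}|_{H_\alpha}\bigr) - 2\,\rk\bigl(N^{k}|_{H_\alpha}\bigr) + \rk\bigl(N^{k+1}|_{H_\alpha}\bigr).
\]
In particular $J_{k,\alpha}$ is an explicit $\{0,\pm1\}$-combination of the refined Hodge--Deligne numbers $h^{p,q}(H_\alpha)$ of $\tilde H^{n-1}(F)$, indexed by weight and by the eigenvalue of $M_s$, and summing over $k$ returns $\dim H_\alpha$, i.e. Varchenko's eigenvalue multiplicities.

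Next I use the reduction. Under the convenience and non-degeneracy hypotheses one builds, from $\Gamma_+(f)$ inside a suitable toric variety, a one-parameter degeneration $X \to \D^\times$ of non-degenerate hypersurfaces whose nearby fibre computes $\tilde H^{n-1}(F)$ together with its monodromy; this is the Hodge-theoretic enhancement of the classical passage, going back to Varchenko and A'Campo, from $f$ near the origin to an adapted toric modification. Convenience is what makes this model proper enough that cohomology lies in the expected degree, and non-degeneracy is what makes the relevant strata non-degenerate in Tevelev's sense, so that both main results apply; the refined limit mixed Hodge numbers of $X$ then agree, up to a fixed Tate twist and the reindexing by $c_\alpha$ recorded above, with the numbers $h^{p,q}(H_\alpha)$. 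Feeding this family into the second main result expresses each such number as an explicit expression in the weighted Ehrhart polynomials and local weighted Ehrhart data of the faces of $\Gamma_+(f)$. Substituting into the displayed identity yields the asserted explicit combinatorial formula for $J_{k,\alpha}$; it is non-negative because, as the displayed identity makes clear, its value is simultaneously a dimension of a primitive subspace and the manifestly non-negative quantity $\rk N^{k-1} - 2\rk N^k + \rk N^{k+1}$, i.e. an honest count of Jordan blocks.

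The main obstacle lies entirely in the middle step, and is really the content of Section~\ref{s:Milnor} rather than of the corollary itself: one must pin down precisely which non-degenerate degeneration has nearby fibre the Milnor fibre — including the subtlety that, because one invokes convenience, a global toric model is being compared to the local Milnor fibre — verify that its strata genuinely meet the non-degeneracy hypotheses of the two main theorems, and carefully bookkeep the several normalisations, namely the weight centering $c_\alpha$ according to whether $\alpha = 1$, the Tate twist relating "limit" and "Milnor fibre" Hodge numbers, and the reduced-versus-unreduced correction. Once that identification is established, the corollary follows by transporting the second main result's formula through the elementary linear algebra recalled in the first paragraph, and the positivity of weighted Ehrhart invariants (together with the intrinsic non-negativity of a Jordan-block count) gives the "non-negative" clause.
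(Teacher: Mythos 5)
Your opening linear-algebra paragraph is correct and matches the paper: the Steenbrink weight filtration on $H^{n-1}(F_0)$ is the $N$-weight filtration centred at $c_\alpha$ ($= n$ for $\alpha = 1$, $n-1$ otherwise), and the identity $J_{k,\alpha} = \dim\Gr^W_{c_\alpha+k-1} - \dim\Gr^W_{c_\alpha+k+1}$ is exactly Definition~\ref{d:weight} after applying the symmetry of the filtration. Summing over $k$ does recover the eigenvalue multiplicities, i.e.\ Varchenko's formula, as you say. The problems are in the other two steps.

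The middle step, which you flag as "the main obstacle," is in fact where your route diverges from the paper and is left unexecuted. You propose to produce a one-parameter family $X \to \D^*$ of sch\"on torus hypersurfaces whose nearby fiber, with its monodromy, computes $\tilde H^{n-1}(F_0)$, and then to apply the second main result (Theorem~\ref{t:mainhyper}) to that family. No such family is constructed in the paper, and neither is one constructed in your proposal: the Milnor fiber is a local object and is not literally a nearby fiber of a global family of torus hypersurfaces. What the paper actually does is invoke the Matsui--Takeuchi formula \cite[Theorem~4.3]{MTMilnor} for the Denef--Loeser \emph{motivic Milnor fiber}, which already writes it as a $\Z[\L]$-linear combination of classes $[V^\circ_{\Delta_Q}\circlearrowleft\hat\mu]$ of sch\"on complex torus hypersurfaces with finite-order torus action indexed by bounded faces $Q$ of $\Gamma_+(f)$; one then applies Theorem~\ref{t:DKformula} (the single-hypersurface special case of the second main result, where the torus action is semisimple) term by term to obtain Theorem~\ref{t:milnorformula}. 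The paper only \emph{remarks} that a genuine family-of-hypersurfaces reduction in the spirit of Steenbrink \cite{SteMotivic} would give analogous formulas; it does not carry this out, and asserting it without argument is a real gap. Moreover, your appeal to ``monodromy at $0$'' as a model is misleading: the formulas in Example~\ref{e:0} range over \emph{all} faces of $\mathrm{NP}(f)$ not at infinity, whereas the Milnor-fiber formulas range only over \emph{bounded} faces of $\Gamma_+(f)$, so the two are not the same family and one cannot simply reuse that degeneration.

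Second, you do not actually produce a ``non-negative'' formula in the sense the introduction uses the word, namely a sum of non-negative integers. Writing $J_{k,\alpha}$ as $\dim\Gr^W_{c_\alpha+k-1} - \dim\Gr^W_{c_\alpha+k+1}$, and then substituting the combinatorial expression for each graded piece, gives an alternating combination of weighted Ehrhart quantities; observing that the result must be non-negative because it counts Jordan blocks is true but is not a non-negative \emph{formula}. What Corollary~\ref{c:jordanMilnor} actually does is one further combinatorial step: since $l_{\Delta^0}(\cS_0,Q_0;t)$ is symmetric and unimodal (Remark~\ref{r:non-negative}), it decomposes as $\sum_i \tilde l_{Q,i}\,t^i(1+t+\cdots+t^{n-1-\dim Q-2i})$ with $\tilde l_{Q,i}\ge 0$, and the Jordan-block generating function is then a convolution against the manifestly non-negative $\tilde l_{\Delta^0}(\cS_0,Q_0;u^2)$ and $\lc(\Delta_Q,\nu_0|_{\Delta_Q};1)$ (resp.\ $\lc(Q;1)$). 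That rewriting — telescoping difference into positive-coefficient expansion via unimodality — is precisely the content of the corollary and is missing from your argument.
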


We refer the reader to Corollary~\ref{c:jordanMilnor} for the specific formulas.
By a `non-negative' formula, we mean that each $J_{k,\alpha}$ is expressed as a sum of non-negative integers. By summing over $k$, we obtain a non-negative formula for the multiplicities of the eigenvalues of
$M_f$, which is equivalent to Varchenko's formula (Remark~\ref{r:var}).
Using different techniques,
special cases of formulas for  the $J_{k,\alpha}$ have previously been proved by
Matsui and Takeuchi \cite{MTMilnor} and van Doorn and Steenbrink \cite{DSSupplement}, and an algorithm to compute the numbers $J_{k,\alpha}$  involving simplicial resolutions of toric varieties was proved by
 Matsui and
Takeuchi in  \cite{MTMilnor}, extending work of Danilov  \cite{DanNewton} and Tanab\'e \cite{TanCombinatorial}.
We note that  algorithms in a  more general setting are given by Schulze in \cite{SchAlgorithms}.

 The theorem above is a corollary of more general results, 
 that we will outline in the remainder of the introduction.
Given a family of varieties $f:X\rightarrow\D^*$ over a complex disc, Denef and Loeser  \cite{DLGeometry} associate an invariant $\psi_X$ called the \define{motivic nearby fiber}, which contains information about the
variation of Hodge structures associated to the degeneration. In particular, if we fix a non-zero fiber $X_{\gen} := f^{-1}(t)$ for some sufficiently small $t \in \D^*$, then $\psi_X$ encodes information on the induced monodromy map $T: H^m_c(X_{\gen}) \rightarrow  H^m_c(X_{\gen})$, which is a linear operator on the complex cohomology with compact supports of $X_{\gen}$ and is quasi-unipotent i.e. $T = T_sT_u$, where
$T_s$ and $T_u$ commute, $T_s$ is semi-simple and has finite order, and $T_u$ is unipotent.
In \cite{Geometry}, Eric Katz and the author showed that one can use tropical geometry to concretely compute
a specialization of $\psi_X$ that is invariant  under base change, and hence encodes information about $T_u$ but not $T_s$.
Using new combinatorics developed in \cite{Combinatorics}, in the case of families of sch\"{o}n complex hypersurfaces of tori, explicit combinatorial formulas were then deduced for
 the \define{refined limit mixed Hodge numbers}, which, in particular, determine
 the Jordan block structure of
the action of $T_u$ on the graded pieces (with respect to the Deligne weight filtration) of  $H^m_c(X_{\gen})$. Here \define{sch\"on} is a `generic' condition introduced by
Tevelev \cite{TevComp}, generalizing the notion of  non-degeneracy of a hypersurface of a complex torus \cite{KhoNewton}.
Equivalent formulas were also given for the action of $T_u$ on the intersection cohomology groups of a family of sch\"on hypersurfaces of a projective toric variety.

The  goal of this note is to extend the above results to the full generality of the motivic nearby fiber $\psi_X$ and monodromy operator $T$. Our first main result is to show that
the motivic nearby fiber may be computed using tropical geometry (see Section~\ref{s:trop}). A key point is that  since there is a range of available software implementing the main algorithms in tropical geometry \cite{GFan,tropicallib}, our main formula
Theorem~\ref{t:comp} can be computed in practice.

Weighted Ehrhart theory was introduced by the author in  \cite{Weighted} both in order to  extend and reprove many results in
Ehrhart theory, the study of enumeration of lattice points in polytopes, and in order to give explicit computations of  motivic integrals on toric stacks \cite{Stacks}. Roughly speaking, one associates
to every lattice point $v$ a `weight' $w(v)$ in $\Q/\Z$, and then attempts to enumerate lattice points in polytopes keeping track of the associated weights.
Extending work in \cite{Combinatorics}, in Section~\ref{s:mixed}, we associate new 
Ehrhart-theoretic invariants to a pair $(P,\nu)$, where $P$ is a lattice polytope and $\nu$ is the convex graph of an integral height function on $P$.
In particular, we introduce the \define{weighted refined limit mixed $h^*$-polynomial} $h^*(P,\nu;u,v,w) \in \Z[\Q/\Z][u,v,w]$. Roughly speaking, our second main result states that if $X^\circ$ is a family of
sch\"on complex hypersurfaces of tori, then the associated \define{equivariant refined limit mixed Hodge numbers} are precisely
the coefficients of  $h^*(P,\nu;u,v,w)$ (Theorem~\ref{t:mainhyper}, Corollary~\ref{c:explicit}), where $(P,\nu)$ is the Newton polytope and associated convex graph associated to $X^\circ$.
In particular, this gives combinatorial formulas for the Jordan  block structure of
the action of $T$ on the graded pieces (with respect to the Deligne weight filtration) of  $H^m_c(X^\circ_{\gen})$.
Equivalent formulas are also given for the equivariant refined limit mixed Hodge numbers associated to the intersection cohomology groups of a  family of sch\"on hypersurfaces of a projective toric variety (Theorem~\ref{t:mainintersection}, Corollary~\ref{c:explicit}). In Example~\ref{e:orbifold}, we express these invariants in a special case in terms of dimensions of orbifold cohomology  \cite{CROrbifold, CRNew} of a
toric stack \cite{BCSOrbifold}. It would be interesting to have an explanation of this fact involving `mirror symmetry'.

As a special case of the above result, in Theorem~\ref{t:DKformula}, we give a formula for the equivariant Hodge-Deligne polynomial of a sch\"on complex hypersurface of a torus invariant under the action of an element of the torus of finite order.
An algorithm to determine this polynomial had previously been given by Matsui and Takeuchi in \cite{MTMonodromy}, generalizing an algorithm of Danilov and Khovanski{\u\i} in the non-equivariant setting \cite{DKAlgorithm}. Moreover, the above formula reduces to a formula of  Borisov and Mavlyutov \cite{BMString} in the non-equivariant setting, which itself is a simplification of a formula of Batyrev and Borisov
\cite{BBMirror}.

We next present some applications to the monodromy of complex polynomials. Let $f(x_1, \ldots, x_n) \in \C[x_1, \ldots, x_n]$ be a complex polynomial. It is well-known that there exists a minimal finite subset $B_f \subseteq \C$,
such
that $f: \C^n \rightarrow \C$ is a locally trivial fibration away from $B_f$. Then monodromy defines an action of the fundamental group $\pi_1(\C \smallsetminus B_f)$  on the cohomology of a fixed generic fiber of $f$. 
On the one hand, we are interested in the monodromy action induced by moving anti-clockwise around a small loop around an element of $b \in B_f$. After translating $f$ by a constant, we may assume that $b = 0$, and then
the resulting action is called \define{monodromy at 0}.
On the other hand, choose $R > 0$ sufficiently large such that $B_f$ is strictly contained in $\{ z \in \C \mid |z| = R \}$. Then the  monodromy action on cohomology induced by moving clockwise around such a loop is called the \define{monodromy at infinity} of $f$, and
a fundamental result of
Dimca and N\'emethi \cite{DNMonodromy} states that monodromy at infinity essentially determines the monodromy action of $\pi_1(\C \smallsetminus B_f)$.
We assume that $f$ is convenient in the sense that its Newton polytope has non-zero intersection with each  ray through a coordinate vector \cite{Kou}.
Then under certain sch\"onness (also called `non-degeneracy') conditions, we give explicit combinatorial formulas that essentially completely describe
both monodromy at $0$ (Example~\ref{e:0}) and
monodromy at infinity (Example~\ref{e:infty}). In particular, we deduce `non-negative' combinatorial formulas for both the equivariant limit mixed Hodge numbers associated to monodromy at infinity
(Corollary~\ref{c:formula}), and the Jordan block structure of monodromy at infinity (Corollary~\ref{c:jordaninfty}). Algorithms to compute the latter invariants were given by Matsui and
Takeuchi in  \cite{MTMonodromy}. Also, these formulas specialize to known formulas for the spectrum at infinity of $f$ \cite[Theorem~5.11]{MTMonodromy} and zeta function at infinity of $f$
\cite{LSZeta}. We note that some of the above results may be extended without the assumption that $f$ is convenient (see Remark~\ref{r:notconvenient}).
We also present analogous `local' results for the Milnor fiber of a polynomial $f$ with an isolated singularity at the origin, including combinatorial formulas for both the corresponding the equivariant limit mixed Hodge numbers,
and, as outlined at the beginning of this introduction, the Jordan block structure of the action of monodromy on the cohomology of the Milnor fiber.


Finally, we mention some possible generalizations of the above results that we expect will follow by similar methods.
Firstly,
using the `Cayley trick' of Danilov and Khovanski{\u\i} \cite[Section~6]{DKAlgorithm}, one may extend our results on hypersurfaces to the case of complete intersections.
In particular, in the case of the monodromy at infinity and monodromy of the cohomology of the Milnor fiber, one may obtain explicit formulas that  extend algorithms given by Esterov and Takeuchi in \cite{ETMotivic}.
Secondly, in the case when $f$ is not convenient, one may use the results above to obtain formulas extending algorithms of  Takeuchi and Tibar in \cite{TTMonodromies}. Lastly, analogous formulas to those in Section~\ref{s:Milnor}
 for families of hypersurfaces of an affine toric variety (rather than simply affine space) may
be obtained using the setup and results of Steenbrink in  \cite{SteMotivic}.

 \subsection{Organization of the paper}

     This paper is structured as follows. In Section~\ref{s:degenerations}, we review the necessary geometric background. In Section~\ref{s:trop}, we show how to compute the motivic nearby fiber using tropical
     geometry. In Section~\ref{s:weighted}, we review the necessary combinatorial background before introducing our main combinatorial invariants. In Section~\ref{s:hyper}, we prove our combinatorial formulas
     for geometric invariants associated to degenerations of hypersurfaces. Finally, in Section~\ref{s:sing}, we apply our results to deduce formulas for invariants associated with the monodromy of complex polynomials.


\medskip
\noindent
{\it Notation and conventions.}
All cohomology has complex coefficients.
If $N$ is a lattice, then $N_\R = N \otimes_\R \R$. We identify the group  $\Q/\Z$ with the group $\mathbb{S}^1_\Q$ of rational points on the circle $\{ z \in \C \mid |z| = 1 \}$, sending $[k] \in \Q/\Z$ to $\alpha = e^{2 \pi \sqrt{-1}k} \in \mathbb{S}^1_\Q$. We fix $\K = \C(t)$.
A variety $X$ over $\K$ is naturally interpreted as a complex variety with a morphism $f:X\rightarrow \D^*$, for some sufficiently small punctured complex disc $\D^*$ around the origin,
and  $X_{\gen} := f^{-1}(t)$ denotes a fixed fiber for some choice of $t \in \D^*$.


{\it Acknowledgements.} 
The author would like to thank Anthony Henderson for some useful comments.

\section{Geometry of degenerations}\label{s:degenerations}

In this section we briefly recall the notion of the motivic nearby fiber and its associated geometry. We refer the reader to  \cite{BitMotivic}, \cite{Peters}  and \cite{PSMixed} for details.
A similar exposition  is given in the non-equivariant setting in \cite[Section~3]{Geometry}.


\subsection{The equivariant Grothendieck ring}

We follow the treatment in \cite{BitMotivic} below.
Fix a field $k$.
The \define{Grothendieck ring} $K_0(\Var_k)$ of algebraic varieties over $k$ is the free abelian group generated by isomorphism classes $[V]$ of varieties $V$ over $k$, modulo  the relation
\begin{equation}\label{e:cut}
[V] = [U] + [V \setminus U],
\end{equation}
whenever $U$ is an open subvariety of $V$.  Multiplication is given by tensor product of varieties.

 Suppose that $G$ is a finite group. An (algebraic) action of $G$ on a variety over $k$ is \define{good} if every orbit lies in an open affine subvariety.
 Then we may similarly define $\tilde{K}_0^{G}(\Var_k)$ to be the
 free abelian group generated by isomorphism classes $[V]$ of varieties $V$ over $k$ together with a good action of $G$, modulo  the relation
$[V] = [U] + [V \setminus U]$,
whenever $U$ is a $G$-invariant open subvariety of $V$.  The \define{equivariant Grothendieck ring}  $K_0^{G}(\Var_k)$ is the quotient of
 $\tilde{K}_0^{G}(\Var_k)$ by the relation $[\P(V) \circlearrowleft G] = [\P^n \times (X \circlearrowleft G)]$, where $V \rightarrow X$ is a vector bundle of rank $n + 1$ with a good $G$-action that
 is linear over a good $G$-action on $X$. Here $\P(V)$ denotes the projectivization of $V$ with induced $G$-action, and $\P^n$ admits a trivial $G$-action.
 A group homomorphism $G \rightarrow H$ induces a ring homomorphism $K_0^{H}(\Var_k) \rightarrow K_0^{G}(\Var_k)$ by restriction. In particular,  $K_0^{G}(\Var_k)$ is a module over
 $K_0(\Var_k)$, and
 forgetting the action of $G$ gives a ring homomorphism $K_0^{G}(\Var_k) \rightarrow K_0(\Var_k)$.
  We will follow the convention that $\L := [\A^1]$ is
  the class of the affine line with trivial $G$-action.
A \define{motivic invariant} is a ring homomorphism $K_0^{G}(\Var_k) \rightarrow R$, for some ring $R$.

\begin{remark}\label{r:representation}
Suppose that a finite group $G$ acts linearly on a $k$-vector space $V$ of dimension $n + 1$ i.e.  $V$ is a representation of $G$. Then taking $X = \pt$ in the condition above, we see that
$[\P(V)] = [\P^n] = \L^n + \L^{n - 1} + \cdots + 1 \in K_0^{G}(\Var_k)$.
\end{remark}

Now assume that $k = \C$ and identify $\mu_n \cong \Z/n\Z$ with the group of $n^{\textrm{th}}$ roots of unity in $\C$
generated by $\exp(2 \pi \sqrt{-1} i/n)$. The epimorphisms $\mu_{nd} \rightarrow \mu_n, \zeta \mapsto \zeta^d$
give rise to a projective limit $\hat{\mu} = \varprojlim \mu_n$. 
If $g$ is an element of a group $G$ of finite order, then we have a well-defined
group homomorphism $\hat{\mu} \rightarrow \mu_n \rightarrow G$ sending $\exp(2 \pi \sqrt{-1} i/n) \mapsto g$ whenever $g^n = 1$. In this way, 
group actions of elements of finite order give rise
to 
group actions of $\hat{\mu}$.

A \emph{good $\hat{\mu}$-action} on a complex variety is an action of $\hat{\mu}$ induced by a good $\mu_n$-action for some $n > 0$.
The equivariant Grothendieck ring $K_0^{\hat{\mu} }(\Var_\C)$ is defined similarly as above,  and
 coincides with the direct limit of the restriction maps $K_0^{\mu_{n}}(\Var_\C) \rightarrow K_0^{\mu_{nd}}(\Var_\C)$ induced by
 the epimorphisms $\mu_{nd} \rightarrow \mu_n, \zeta \mapsto \zeta^d$.

\begin{example}\label{e:torus}
Let $v = (v_1, \ldots, v_n) \in \Q^n$.
 Then multiplication by $(e^{2 \pi \sqrt{-1}v_1}, \ldots, e^{2 \pi \sqrt{-1}v_n})$ gives a good  $\hat{\mu}$-action
on $(\C^*)^n$. This action extends to $\P^n$, which admits a natural stratification into tori each invariant under $\hat{\mu}$, and it follows from
Remark~\ref{r:representation} and induction on dimension that $[(\C^*)^n \circlearrowleft \hat{\mu}] = (\L - 1)^n \in K_0^{\hat{\mu} }(\Var_\C)$.
\end{example}

\subsection{The motivic nearby fiber}\label{s:mnf}
Fix $\K = \C(t)$.
Then the motivic nearby fiber is a ring homomorphism
\[
\psi : K_0(\Var_\K) \rightarrow K_0^{\hat{\mu} }(\Var_\C),
\]
\[
[X] \mapsto \psi_X.
\]
A result of Bittner \cite{BitUniv} implies that $K_0(\Var_\K)$ is generated by the classes of smooth, proper varieties. In particular, the description below determines $\psi$.

We follow the description of the motivic nearby fiber in  \cite{SteMotivic}.
A variety $X$ over $\K$ is naturally interpreted as a complex variety with a morphism $f:X\rightarrow \C^* \setminus \{ b_1, \ldots , b_r \}$, for some points $b_1,\ldots, b_r$.
Assume that $X$ is smooth and extend $X$ to a variety $X'$ with flat morphism $f: X' \rightarrow \A^1 \setminus \{ b_1, \ldots , b_r \}$. After resolving singularities, we may assume
that $X'$ is smooth and
that the central fiber $f^{-1}(0) = \sum_i m_i D_i$ is a simple normal crossings divisor with irreducible components $D_1,\ldots,D_r$, with multiplicities $m_1,\ldots, m_r$ respectively.
For each non-empty subset  $I \subseteq \{ 1, \ldots ,  r \}$, let  $D_I^\circ = \cap_{i \in I} D_i \setminus \cup_{j \notin I} D_j$ and  $m_I= \gcd(m_i \mid i \in I)$.
Restrict $f$ to a small complex disc $\D$ about the origin
 and let $m$ be a common multiple of $m_1,\ldots, m_r$.
 Pullback $X'$ via the map
$\D \rightarrow \D$, $t \mapsto t^m$ and normalize to obtain a commutative diagram
\[\xymatrix{
\tilde{X'}  \ar[d]^{\tilde{f}} \ar[r]^{\rho}  & X'  \ar[d]^f \\
\D \ar[r]^{t \mapsto t^m}  & \D. \\
}\]
The action of $\mu_m$ on $\D$, with $\zeta \in \mu_m$ acting via $t \mapsto \zeta t$, extends to an action on $\tilde{X'}$, and hence a good $\hat{\mu}$-action on the central fiber.
Let $\tilde{D_I^\circ}$ be proper transform of $D_I^\circ$ via $\rho$. Then  $\rho:  \tilde{D_I^\circ} \rightarrow D_I^\circ$ is a $\mu_{m_I}$-covering.
 The motivic nearby fiber $\psi_X \in K_0^{\hat{\mu} }(\Var_\C)$ of $X$ is given by
\begin{equation}\label{e:dmotivic}
\psi_X = \sum_{\emptyset \ne I \subseteq \{ 1, \ldots ,  r \}} [\tilde{D}_I^\circ  \circlearrowleft \hat{\mu}] (1 - \mathbb{L})^{|I| - 1}.
\end{equation}

 \subsection{Motivic invariants}\label{s:invariants} The importance of the motivic nearby fiber comes from its specializations. In this section, we will explain the following commutative diagram,
 which will be crucial in the rest of the paper, where the first vertical map is the motivic nearby fiber:
 \[\xymatrix{
K_0(\Var_\K) \ar[r]^{\bar{E}_{\hat{\mu}}} \ar[d]^\psi  &\Z[\Q/\Z][u,v,w]  \ar[r]^{\substack{u \mapsto uw^{-1} \\ v \mapsto 1}}   \ar[d]^{w \mapsto 1}  & \Z[\Q/\Z][u,w] \ar[d]^{w \mapsto 1}  &\\
K_0(\Var_\C)^{\hat{\mu}} \ar[r]^{E_{\hat{\mu}}} & \Z[\Q/\Z][u,v] \ar[r]^{v \mapsto 1} & \Z[\Q/\Z][u]  \ar[r]^{u \mapsto 1}  & \Z[\Q/\Z].
}\]

Throughout the paper, we will identify the group  $\Q/\Z$ with the group $\mathbb{S}^1_\Q$ of rational points on the circle $\{ z \in \C \mid |z| = 1 \}$, sending $[k] \in \Q/\Z$ to $\alpha = e^{2 \pi \sqrt{-1}k} \in \mathbb{S}^1_\Q$. We will consider the group algebra $\Z[\Q/\Z]$.

\begin{remark}\label{r:conjugate}
 Consider the
 $\Z$-algebra involution on
 $\Z[\Q/\Z]$ defined by setting $\bar{[k]} = [-k]$, corresponding to complex conjugation on the circle. This extends coefficient-wise to an involution on a polynomial ring with coefficients in  $\Z[\Q/\Z]$, that we will
 refer to as \define{conjugation}.
\end{remark}

\begin{remark}\label{r:forget}
Consider the natural $\Z$-algebra homorphism $\Z[\Q/\Z] \rightarrow \Z, [k] \mapsto 1$. One may apply this homomorphism below to get invariants with coefficients in $\Z$ rather than $\Z[\Q/\Z]$.
The corresponding diagram of invariants above over $\Z$ is explained in detail in \cite[Section~3]{Geometry}.
\end{remark}



Consider a complex vector space $B$ that admits a mixed Hodge structure \cite{PSMixed} with corresponding vector space decomposition
\[
B \cong \bigoplus_{p,q} H^{p,q}(B).
\]
Suppose the $\mu_n$ acts linearly on $B$, preserving the mixed Hodge structure. 
With the convention above, for  $\alpha \in \Q/\Z$, 
we write $h^{p,q}(B)_\alpha$ for the dimension of the $\alpha$-eigenspace of $H^{p,q}(B)$,
and write $h^{p,q}(B,\hat{\mu}) := \sum_{\alpha \in \Q/\Z} 
h^{p,q}(B)_\alpha \alpha  \in \Z[\Q/\Z]$. For a sequence of such representations $B_\bullet = \{ B_m \mid m \ge 0 \}$, set
$e^{p,q}(B_\bullet,\hat{\mu} ) = \sum_{m} (-1)^m h^{p,q}(B_m, \hat{\mu}) \in \Z[\Q/\Z]$. Then the \define{equivariant Hodge polynomial} of $B_\bullet$ is defined by
\[
E(B_\bullet) = E(B_\bullet; u,v) = \sum_{p,q} e^{p,q}(B_\bullet,\hat{\mu})    u^p v^q \in \Z[\Q/\Z][u,v].
\]


In  \cite{DelTheory}, Deligne proved that the  $m^{\textrm{th}}$ cohomology group  with compact supports $H^m_c  (V)$ of a complex variety $V$
admits a canonical mixed Hodge structure with decreasing filtration $F^\bullet$ called the \define{Hodge filtration} and increasing filtration $W_\bullet$ called the
\define{Deligne weight filtration}. Given a good action of  $\hat{\mu}$ on $V$, we may set $B_m = H^{p,q}(H^m_c  (V))$ above. The corresponding equivariant Hodge polynomial
is denoted
$E_{\hat{\mu}}(V;u,v) \in \Z[\Q/\Z][u,v]$, and  is called the \define{equivariant Hodge-Deligne polynomial} of $V$. The invariants $h^{p,q}(H^m_c  (V))_\alpha$ are called
the \define{equivariant mixed Hodge numbers} of $V$.
We have a ring homomorphism called the
\define{equivariant Hodge-Deligne map}:
\[
E_{\hat{\mu}}: K_0^{\hat{\mu}}(\Var_k) \rightarrow  \Z[\Q/\Z][u,v], [V] \mapsto E_{\hat{\mu}}(V;u,v).
\]

As in Remark~\ref{r:forget}, we will also consider the corresponding invariants over $\Z$. In this case, we have the usual mixed Hodge numbers  $h^{p,q}(H^m_c  (V)) = \sum_{\alpha} h^{p,q}(H^m_c  (V))_\alpha$
and corresponding Hodge-Deligne polynomial $E(V;u,v) \in \Z[u,v]$.

\begin{remark}
More generally, for any finite group $G$ acting algebraically on  a complex variety $V$, we obtain a linear action of $G$ on $H^{p,q}(H^m_c  (V))$, and one can form the
equivariant Hodge-Deligne polynomial
\[
E_G(V;u,v) = \sum_{p,q} \sum_m (-1)^m H^{p,q}(H^m_c  (V)) u^p v^q \in R(G)[u,v],
\]
where $R(G)$ is the complex representation ring of $G$. This induces a ring homomorphism $\tilde{K}_0^{G}(\Var_\C) \rightarrow R(G)[u,v]$. This invariant was introduced and studied by the author in
\cite{Representations},  generalizing the notion of weight polynomial due to Dimca and Lehrer \cite{DLPurity}, and  the notion of   equivariant $\chi_y$-genus due to Cappell, Maxim and Shaneson \cite{CMSEquivariant}. In our case, $\Z[\Q/\Z]$ may be viewed as the direct limit associated to the 
restriction maps $R(\mu_{n}) \rightarrow R(\mu_{nd})$ induced by
 the epimorphisms $\mu_{nd} \rightarrow \mu_n, \zeta \mapsto \zeta^d$.

\end{remark}

Recall that a variety $X$ over $\K$ is naturally interpreted as a complex variety with a morphism $f:X\rightarrow \C^* \setminus \{ b_1, \ldots , b_r \}$, for some points $b_1,\ldots, b_r$.
Restricting $X$ to a family over a sufficiently small punctured complex disc centered at the origin, and fixing a fiber $X_{\gen} := f^{-1}(t)$ and counter-clockwise orientation around the disc, there exists a quasi-unipotent monodromy map $T = T_sT_u: H^m_c(X_{\gen}; \C) \rightarrow  H^m_c(X_{\gen}; \C)$,  where $T_s$ is semi-simple and $T_u$ is unipotent. Then the cohomology
groups $H_c^m(X_{\gen})$ admit a weight filtration $M_\bullet$ called the \define{monodromy weight filtration},
and $T_s$ acts preserving the filtrations $(F^\bullet,M_\bullet, W_\bullet)$.
 We will write $H_c^m(X_{\infty})$ to denote $H_c^m(X_{\gen})$ with the filtrations $(F^\bullet,M_\bullet,W_\bullet)$.
 The filtrations $(F^\bullet,M_\bullet)$ induce a mixed Hodge structure on $H_c^m (X_{\infty})$, and  the nilpotent operator $N = \log T_u$ is a morphism of mixed Hodge structures of type $(-1,-1)$.
 The corresponding  invariants $h^{p,q}(H_c^m (X_{\infty}))_\alpha$ are called
the \define{equivariant limit mixed Hodge numbers}, and the corresponding equivariant Hodge polynomial $E(X_\infty, \hat{\mu};u,v) \in \Z[\Q/\Z][u,v]$ is called \define{equivariant limit Hodge-Deligne polynomial}.
A deep result of Denef and Loeser states that this is a specialization of the motivic nearby fiber in the
 sense that  $E(X_\infty,\hat{\mu};u,v) = E_{\hat{\mu}}(\psi_X)$. As in Remark~\ref{r:forget},  we may consider the corresponding invariants over $\Z$ i.e. the limit mixed Hodge numbers and
  limit Hodge-Deligne polynomial.

Before proceeding, we recall the following standard linear algebra construction.

\begin{definition}\label{d:weight}
Let $A$ be a nilpotent linear operator on a finite dimensional vector space $V$ such that $A^{r + 1} = 0$. Then the \define{$A$-weight filtration centered at $r$} is the increasing filtration $\{ V_\bullet \}$ of $V$ by subspaces
\[
0 \subseteq V_0 \subseteq V_1 \subseteq \cdots \subseteq V_{2r} = V,
\]
uniquely determined
by
\begin{enumerate}
\item $A( V_k ) \subseteq V_{k - 2}$,
\item\label{e:hard} the induced map $A^k: \Gr_{r + k} V \rightarrow \Gr_{r - k} V$ is an isomorphism,
\end{enumerate}
for any non-negative integer $k$. Here we set $V_k = 0$ for $k < 0$. The filtration encodes the Jordan block structure of $A$. Explicitly,
for $1 \le k \le r + 1$, the number of Jordan blocks of size $k$ equals
\[
\dim \Gr_{r + 1 - k} V - \dim \Gr_{r -  1 - k} V.
\]
\end{definition}

 The monodromy weight filtration $M_\bullet$ has the following important geometric property:  for every non-negative integer $r$,  the induced filtration $M(r)_\bullet$ on $\Gr_r^W H_c^m (X_{\infty})$
 coincides with the $N(r)$-weight filtration centered at $r$, where $N(r)$ is the nilpotent operator  induced by $N = \log T_u$.
It follows from Definition~\ref{d:weight} that the monodromy weight filtration, together with the action of $T_s$, encodes the Jordan block structure of the monodromy operator $T$  acting on  $\Gr_r^W H_c^m (X_{\infty})$.
 Moreover,  $(F^\bullet,M_\bullet)$ induces
 a mixed Hodge structure on  $\Gr_r^W H_c^m (X_{\infty})$ that is invariant  under $T_s$, and  $N(r)$ is a morphism of mixed Hodge structures of type $(-1,-1)$. The associated equivariant mixed Hodge numbers
 \[
 h^{p,q}(\Gr_r^W H_c^m (X_{\infty}))_\alpha =  ( \Gr_F^p \Gr_{p + q}^M \Gr_r^W H_c^m (X_{\infty}))_\alpha
 \]
 are called the \define{equivariant refined limit mixed Hodge numbers},
 and the associated equivariant Hodge polynomial is the coefficient of $w^r$ in a polynomial $E(X_\infty, \hat{\mu};u,v,w) \in \Z[\Q/\Z][u,v,w]$ called the
 \define{equivariant refined limit Hodge-Deligne polynomial}. As in Remark~\ref{r:forget},  we have corresponding invariants over $\Z$, that were introduced in \cite[Section~3]{Geometry}, to where
 we refer the reader for further details. We have a ring homomorphism
 \[
 \bar{E}_{\hat{\mu}}: K_0(\Var_\K) \rightarrow \Z[\Q/\Z][u,v,w],
 \]
 \[
 [X] \mapsto E(X_\infty,\hat{\mu};u,v,w).
 \]
 It follows from the definitions that we recover the equivariant limit mixed Hodge numbers and the usual mixed Hodge numbers of $X_{\gen}$ via the specializations:
 \begin{equation*}\label{e:limitnumber}
 h^{p,q}(H_c^m(X_\infty))_\alpha =  \sum_r h^{p,q,r}(H_c^m(X_{\infty}))_\alpha,
\end{equation*}
\begin{equation}\label{e:number}
h^{p,r-p}(H_c^m(X_{\gen})) = \sum_q \sum_{\alpha \in \Q/\Z} h^{p,q,r}(H_c^m(X_{\infty}))_\alpha.
\end{equation}
Correspondingly, we have $E(X_\infty, \hat{\mu};u,v,1) = E(X_\infty, \hat{\mu};u,v)$, and $E(X_\infty, \hat{\mu};uw^{-1},1,w)$ restricts to the Hodge-Deligne polynomial $E(X_{\gen}; u,w) \in \Z[u,w]$.
The further specializations $E(X_{\gen}; u,1)$ and $E(X_{\gen}; 1,1)$ are the $\chi_y$-characteristic and Euler characteristic of $X_{\gen}$ respectively.

\begin{remark}\label{r:unimodal}
With the notation above, since $N(r)$ is a morphism of mixed Hodge structures of type $(-1,-1)$, the isomorphisms \eqref{e:hard} in Definition~\ref{d:weight} imply that
for $0 \le j \le r$, the sequence
$\{ h^{i+ j,i,r}(H_c^m(X_{\infty}))_\alpha \mid 0 \le i \le r -j \}$ is symmetric and unimodal.
\end{remark}

 \begin{remark}\label{r:symmetriesrefined}
 Since $H^{p,q}(H_c^m(X_\infty))$ and $H^{q,p}(H_c^m(X_\infty))$ are conjugate, and by  Remark~\ref{r:unimodal}, the
  refined limit mixed Hodge numbers satisfy the following symmetries:
\[
h^{p,q,r}(H_c^m(X_{\infty}))_\alpha = h^{r-q,r-p,r}(H_c^m(X_{\infty}))_\alpha  = h^{q,p,r}(H_c^m(X_{\infty}))_{\alpha^{-1}}  = h^{r-p,r-q,r}(H_c^m(X_{\infty}))_{\alpha^{-1}} .
\]
Hence the equivariant refined limit Hodge-Deligne polynomial satisfies the following symmetries with respect to the conjugation action on $\Z[\Q/\Z][u,v,w]$ described in Remark~\ref{r:conjugate}:
\[
E(X_{\infty}, \hat{\mu};u,v,w) = \bar{E(X_{\infty}, \hat{\mu};v,u,w)},
\]
\[
E(X_{\infty}, \hat{\mu};u,v,w) = \bar{E(X_{\infty}, \hat{\mu};u^{-1},v^{-1},uvw)}.
\]
\end{remark}

\begin{example}\label{e:trivial2}
If $V$ is a complex variety and $X = V \times_\C \K$, then $X$
may be regarded as a trivial family over $\D^*$. In this case, $N$ is identically zero, $M_\bullet$ coincides with the Deligne filtration $W_\bullet$, and 
$E(X_\infty,\hat{\mu};u,v,w)  = E(V;uw,vw)$.
\end{example}

\begin{example}
If $X$ is smooth and proper, then $\Gr_r^W H^m (X_{\gen}) = 0$ unless $m = r$. In this case, the monodromy weight filtration, together with the action of $T_s$, encodes the Jordan block decomposition of $T$.
In this case,
\[
E(X_\infty,\hat{\mu};u,v,w) =   \sum_{p,q,m} (-1)^m h^{p,q}(H^m(X_{\infty}))_\alpha u^p v^q w^m \alpha.
\]
\end{example}

\section{Tropical geometry}\label{s:trop}

In this section we prove our formula for the motivic nearby fiber. The non-equivariant version of this result is \cite[Theorem~1.2]{Geometry}.
We continue with the notation of the previous section. In particular, $\K = \C(t)$.

Let $w= (w_1, \ldots, w_n) \in \R^n$ and let $f \in \K[x_{1}^{\pm 1},\ldots, x_n^{\pm 1}]$. Then the  \define{initial degeneration}  $\init_w(f) \in \C[x_{1}^{\pm 1},\ldots, x_n^{\pm 1}]$ of $f$ with respect to $w$
is defined as follows: write $f = \sum_{u \in \Z^n} 
 \lambda_{u}  t^{\omega_f(u)} g_u(t) x^{\alpha}$
for some  $\lambda_{u} \in \C$,  $\omega_f(u) \in \Z$ and
$g_u(t) \in \C(t)$ such that $g(1) = 1$, and  let $m_f = \min \{ \omega_f(u) + w \cdot u \mid \lambda_{u} \ne 0 \}$. Then
\[
\init_w(f) = \sum_{ \substack{ u \in \Z^n \\  \omega_f(u) + w \cdot u = m_f  }}  \lambda_{u} x^{u}.
\]
Assume that $w$ is rational and consider the element
\[
\exp( 2\pi \sqrt{-1} w) := (e^{2\pi \sqrt{-1} w_1}, \ldots,e^{2\pi \sqrt{-1} w_n}) \in (\C^*)^n. \] Then multiplication by $\exp( 2\pi \sqrt{-1} w)$ induces a finite order automorphism $\beta_{w}$ of $ (\C^*)^n$
and hence a good action of $\hat{\mu}$ on $(\C^*)^n$. Note that if $w \ne 0$, then $\beta_{w}$ acts freely on $ (\C^*)^n$.   We compute
\begin{equation}\label{e:pullback}
(\beta_{w})^*(\init_w(f) ) = \init_w(f)(e^{2\pi \sqrt{-1} w_1}x_1, \ldots, e^{2\pi \sqrt{-1} w_1}x_n) = e^{2\pi \sqrt{-1} m_f} \init_w(f).
\end{equation}

Let $X^\circ \subseteq (\K^*)^n$ be a closed subvariety defined by an  ideal $I \subseteq \K[x_{1}^{\pm 1},\ldots, x_n^{\pm 1}]$. Then the initial degeneration $\init_w X^\circ$ is the closed
subscheme of $(\C^*)^n$ defined by the ideal  $\init_w I := ( \init_w(f) \mid f \in I) \subseteq \C[x_{1}^{\pm 1},\ldots, x_n^{\pm 1}]$. By \eqref{e:pullback}, $\init_w I$ is invariant under $\beta_{w}$,
and hence we have an induced action on   $\init_w X^\circ$.


A variety $X$ over $\K$ is \define{very affine} if it admits a closed embedding $X \subseteq (\K^*)^n$ for some $n$.
Tevelev introduced the notion
of a \define{sch\"on}, very affine variety. 
A closed subvariety $X^\circ \subseteq (\K^*)^n$ is sch\"on if $\init_w X^\circ \subseteq (\C^*)^n$ is a smooth (reduced) subvariety for every $w \in \R^n$ \cite[Prop 3.9]{HelmKatz}.
A very affine variety $X$ over $\K$ is sch\"on if it admits a sch\"on closed embedding $X \subseteq (\K^*)^n$ for some $n$ (cf. \cite[Lemma~2.11]{LuxtonQu}).
Note that a complex variety $V$ may be viewed via base change as a variety over $\K$, and hence we may consider the same notions. The definition of sch\"on for a hypersurface of a complex torus
agrees with the notion of   \emph{non-degeneracy} with respect to the Newton polytope  \cite{KhoNewton}.

The \define{tropical variety} $\Trop(X^\circ)$ associated to $X^\circ \subseteq (\K^*)^n$  is the set of points 
\[\{w\in\R^n \mid \init_w X^\circ\neq \emptyset\}.\]
The tropical variety $\Trop(X^\circ)$ can be given a rational polyhedral structure $\Sigma$ such that initial degeneration at $w \in \Trop(X^\circ)$ only depends on the cell containing $w$ in its relative interior (this follows from
\cite[Theorem~1.5]{LuxtonQu}). Hence
for every cell $\gamma$ of $\Sigma$, we may define $\init_{\gamma} X^\circ := \init_w X^\circ$ for any
$w \in \R^n$ in the relative interior 
of $\gamma$. Moreover, by fixing such a $w \in \Q^n$, we obtain a good action of $\hat{\mu}$ on  $\init_{\gamma} X^\circ$ via multiplication by $\exp( 2\pi \sqrt{-1} w)$.

\begin{example}\label{e:simple}
Consider the variety $X^\circ = \{ \sum_{i  = 1}^n 
x_i^{m_i} = t^e \} \subseteq (\K^*)^n$, for some  
$m_i \in \Z_{>0}$ and $e \in \Z$. Let $e_1, \ldots, e_n$ denote
the standard basis vectors of $\R^n$. Then $X^\circ \subseteq (\K^*)^n$ is sch\"on and $\Trop(X^\circ) \subseteq \R^n$  is the affine translation  by $(e/m_1, \ldots, e/m_n)$ of the $(n-1)$-dimensional fan with
cones spanned by subsets of size at most $n - 1$ of $\{e_1, \ldots, e_n, -e_1/m_1 - \cdots -e_n/m_n \}$. Let $w = (e/m_1, \ldots, e/m_n)$. Then $\hat{\mu}$ acts on
$\init_w X^\circ =  \{ \sum_{i  = 1}^n 
x_i^{m_i} = 1 \} \subseteq (\C^*)^n$ via multiplication by
$(e^{2\pi \sqrt{-1} e/m_1}, \ldots,e^{2\pi \sqrt{-1} e/m_n})$. Considering $X^\circ$ as a family $f: X^\circ \rightarrow \A^1$, for $0 \le a \le 1$, multiplication by $(e^{2\pi \sqrt{-1} ae/m_1}, \ldots,e^{2\pi \sqrt{-1} ae/m_n})$
induces an isomorphism between  $f^{-1}(1)$ and $f^{-1}(e^{2\pi \sqrt{-1} a})$. We deduce that the action of $\hat{\mu}$ coincides with the action of monodromy
induced by moving anti-clockwise
around the unit circle
(more generally, see Example~\ref{e:simpleaffine}).

\end{example}

Recall that our goal is to compute the motivic nearby fiber defined in Section~\ref{s:mnf}:
\[
\psi : K_0(\Var_\K) \rightarrow K_0^{\hat{\mu} }(\Var_\C),
\]
\[
[X] \mapsto \psi_X.
\]
A result of Luxton and Qu  \cite[Theorem~6.11]{LuxtonQu} that was conjectured by Tevelev in \cite{TevComp},  implies that every variety over $\K$ admits
a stratification into locally closed, very affine, sch\"on subvarieties. By the additivity property \eqref{e:cut} of $K_0(\Var_\K)$, it follows that we may reduce the computation
of $\psi$ to the case when   $X^\circ \subseteq (\K^*)^n$ is a sch\"on subvariety. In this case, we have the following equivariant  generalization of \cite[Theorem~1.2]{Geometry}.


\begin{theorem} \label{t:comp} Let $X^\circ \subseteq (\K^*)^n$ be a sch\"{o}n closed subvariety and let $\Sigma$ be a rational polyhedral structure on $\Trop(X^\circ)$.
Then the motivic nearby fiber $\psi_{X^\circ} \in K_0^{\hat{\mu} }(\Var_\C)$ is given by
\[
\psi_{X^\circ} =   \sum_{\substack{\gamma\in\Sigma\\\gamma \operatorname{bounded}}}\, (-1)^{\dim \gamma} [\init_\gamma X^\circ \circlearrowleft \hat{\mu}],
\]
where the action of $\hat{\mu}$ on $\init_\gamma X^\circ \subseteq (\C^*)^n$ is induced by multiplication by $\exp( 2\pi \sqrt{-1} w) =
(e^{2\pi \sqrt{-1} w_1}, \ldots,e^{2\pi \sqrt{-1} w_n})$ for a fixed choice of $w \in \Q^n$ in the relative interior of $\gamma$.
\end{theorem}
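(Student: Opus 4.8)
The strategy is to reduce to the non-equivariant formula \cite[Theorem~1.2]{Geometry} by remembering that the equivariant Grothendieck ring $K_0^{\hat\mu}(\Var_\C)$ is built from $\mu_n$-equivariant data for varying $n$, and that the $\hat\mu$-action attached to a cone $\gamma$ comes from multiplication by a torus element $\exp(2\pi\sqrt{-1}w)$ of finite order. So first I would fix a smooth compactification: extend $X^\circ$ to a variety $X'$ over $\C[t]$ with $X' \to \A^1$ flat, resolve so that the special fibre $f^{-1}(0) = \sum m_i D_i$ is a simple normal crossings divisor, and recall from Section~\ref{s:mnf} that $\psi_{X^\circ} = \sum_{\emptyset\ne I}[\tilde D_I^\circ \circlearrowleft \hat\mu](1-\L)^{|I|-1}$, where $\tilde D_I^\circ \to D_I^\circ$ is the $\mu_{m_I}$-cover obtained by pulling back along $t\mapsto t^m$ and normalizing. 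The key compatibility to establish is that this description is compatible with the tropical one on the level of strata.

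**Carrying it out.** The main work is the local analysis. Using sch\"onness, one chooses a suitable toric compactification of the ambient torus $(\K^*)^n$ — more precisely, a tropical compactification in the sense of Tevelev adapted to the polyhedral structure $\Sigma$ on $\Trop(X^\circ)$, so that the closure $X$ of $X^\circ$ meets the toric boundary transversally and the strata of $X$ are indexed by (unbounded) cells of $\Sigma$. After a further toric resolution making everything snc, each component $D_i$ of the special fibre, and more generally each stratum $D_I^\circ$, corresponds to a cell $\gamma$ of $\Sigma$, and the stratum $\tilde D_I^\circ$ (after the base change $t\mapsto t^m$ and normalization) is, up to a factor $(\C^*)^{\dim\gamma}$-worth of torus and boundary combinatorics, identified with $\init_\gamma X^\circ$. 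This identification of strata is exactly what is proved in the non-equivariant setting in \cite{Geometry}; the new content here is to check that under this identification the $\hat\mu$-action induced on $\tilde D_I^\circ$ by $\zeta\colon t\mapsto \zeta t$ on $\tilde{X'}$ agrees with the action on $\init_\gamma X^\circ$ by multiplication by $\exp(2\pi\sqrt{-1}w)$, $w$ in the relative interior of $\gamma$. This is the content of equation \eqref{e:pullback}: $\beta_w^*\init_w(f) = e^{2\pi\sqrt{-1}m_f}\init_w(f)$, which says precisely that on the chart where $t$ is a local coordinate to the appropriate power, rescaling $t$ by a root of unity is absorbed, up to the torus rescaling $\beta_w$, by the monomial change of variables that defines the initial degeneration. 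Running this through the resolution and the normalization, and tracking the Tate twists $(1-\L)$ contributed by the torus directions and by inclusion–exclusion over faces (which carry trivial $\hat\mu$-action, cf.\ Example~\ref{e:torus}), the expression $\sum_{\emptyset\ne I}[\tilde D_I^\circ\circlearrowleft\hat\mu](1-\L)^{|I|-1}$ collapses, by the same combinatorial identity (an Euler-characteristic/Möbius computation on the face poset of $\Sigma$) used in \cite{Geometry}, to $\sum_{\gamma\ \mathrm{bounded}}(-1)^{\dim\gamma}[\init_\gamma X^\circ\circlearrowleft\hat\mu]$.

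**Where the difficulty lies.** The routine part is the combinatorial bookkeeping — it is literally the non-equivariant proof of \cite[Theorem~1.2]{Geometry} rerun with classes in $K_0^{\hat\mu}(\Var_\C)$ instead of $K_0(\Var_\C)$, and the additivity and scissor relations hold $\hat\mu$-equivariantly by construction. The genuinely new obstacle is the \emph{equivariance bookkeeping}: one must check that all the maps appearing in the Tevelev/tropical-compactification and snc-resolution argument can be carried out $\mu_n$-equivariantly for a common $n$ (e.g.\ the resolution of singularities can be taken equivariant, the normalization of the base change is automatically equivariant, and the $\mu_{m_I}$-cover structure on $\tilde D_I^\circ$ is compatible with the $\mu_n$-action inherited from the torus element $\exp(2\pi\sqrt{-1}w)$), and that the identification of $\tilde D_I^\circ$ with $\init_\gamma X^\circ$ intertwines the two $\hat\mu$-actions. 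The crucial local computation making this work is \eqref{e:pullback}; once that is in hand, together with the observation (Example~\ref{e:torus}) that the torus directions contributing the $(1-\L)$ factors carry a \emph{free} $\hat\mu$-action and hence class $(\L-1)^{\dim}$ regardless of the rescaling, the equivariant refinement follows. I would also remark that independence of the choice of polyhedral structure $\Sigma$ and of the representative $w\in\Q^n$ in each relative interior follows because a different choice only subdivides cells, and both sides of the formula are invariant under such subdivision by the same inclusion–exclusion identity (a degenerate cell of positive dimension contributing $\init_\gamma X^\circ\times(\C^*)$-type pieces that cancel).
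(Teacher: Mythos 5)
Your proposal is correct and follows essentially the same route as the paper's proof: reduce via a tropical compactification and snc refinement to the definition \eqref{e:dmotivic} of the motivic nearby fiber, identify each equivariant stratum $\tilde{D}_I^\circ$ (up to a torus factor, cf.\ \eqref{e:different}) with $\init_\gamma X^\circ$, verify the compatibility of $\hat\mu$-actions using the local computation \eqref{e:pullback} and Example~\ref{e:torus}, and then collapse the inclusion--exclusion by the combinatorial identity establishing independence of the polyhedral structure. One small imprecision worth flagging: the $(\L-1)^{\dim\gamma}$ contribution from the torus direction $T_\gamma$ does not come from the action being free (indeed for $w=0$ it is trivial), but from the toric stratification argument in Example~\ref{e:torus} via Remark~\ref{r:representation}, which holds for any finite-order torus element.
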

\begin{proof}
We first recall the relevant geometric setting, as described in \cite[Sections 1,2]{HelmKatz}. We refer the reader to \cite{FulIntroduction} for the appropriate background on toric geometry.

Let $N = \Z^n$,  and 
for each cell $\gamma \in \Sigma$, let $C_\gamma$ denote the cone generated by $\gamma \times \{ 1\}$ in $N_\R \times \R$.
The cones $C_\gamma$ form a fan $\tilde{\Sigma}$ in $N_\R \times \R$, and the recession fan $\Delta$ is defined to be the  intersection of $\tilde{\Sigma}$ with $N_\R \times \{ 0\}$. Projection
$\pr: N_\R \times \R \rightarrow \R$ onto the second coordinate induces a morphism of fans $\tilde{\Sigma} \rightarrow \R_{\ge 0}$.  Fix a positive integer $m$ such that $m\Sigma$ has a lattice polyhedral structure, and let $\tilde{\P}$ and $\P$ denote the complex toric varieties corresponding to
the fan $\tilde{\Sigma}$ with respect to the lattices $N \times m\Z$ and $N \times \Z$ respectively. 
We have an induced commutative diagram corresponding to pulling back and normalizing $\P$:
\[\xymatrix{
\tilde{\P}  \ar[d] \ar[r]^{\pi}  & \P  \ar[d] \\
\A^1 \ar[r]^{t \mapsto t^m}  & \A^1, \\
}\]
where the vertical arrows are flat. After removing the closed torus invariant divisors corresponding to non-zero cones in $\Delta$, the non-zero fibers of  $\tilde{\P}$ and $\P$ are tori and the central fibers
have decompositions into disjoint unions of tori:
\[
\tilde{\P}^\circ = \bigcup_{\substack{\gamma\in\Sigma\\\gamma \operatorname{bounded}}} \tilde{U}_\gamma, \; \; \; \P^\circ = \bigcup_{\substack{\gamma\in\Sigma\\\gamma \operatorname{bounded}}} U_\gamma.
\]
Let $\pi_\gamma: \tilde{U}_\gamma \rightarrow U_\gamma$ denote the morphism of tori induced by $\pi$. This has the following explicit description.
Let $L_\gamma'$ denote the linear span of $C_\gamma$, and let $L_\gamma = L_\gamma' \cap (N_\R \times \{ 0 \}) \subseteq N_\R$. That is, $L_\gamma$ is the translate of the affine span of $\gamma$ to the origin
in $N_\R$. Fix $m_\gamma \in \Z_{>0}$  such that
 $\pr( (N \times \Z) \cap  L_\gamma') = m_\gamma \Z$. Let $N_\gamma = N / N \cap L_\gamma$ with distinguished rational point $[\gamma] \in (N_\gamma)_\Q$. Then projection along
 $L_\gamma'$ induces an isomorphism $N_\Q \times \Q / (N_\Q \times \Q  ) \cap L_\gamma' \rightarrow (N_\gamma)_\Q$ that sends $(0,1) = ([-\gamma],0) + ([\gamma],1) \mapsto [-\gamma]$.
 This isomorphism induces the vertical maps in the following  commutative diagrams:
\[\xymatrix{
0 \ar[r] & N \times m\Z / (N \times m\Z ) \cap L_\gamma'   \ar[d]^{\cong} \ar[r]  & N \times \Z / (N \times \Z ) \cap L_\gamma'   \ar[d]^{\cong} \ar[r]  & \Z/m_\gamma \Z   \ar[d]^{=} \ar[r] & 0\\
0 \ar[r] &  N_\gamma      \ar[r]    & N_\gamma + \Z \cdot [-\gamma] \ar[r]  &\Z/m_\gamma \Z \ar[r]  &  0  \\
}\]
 Applying the functors $\Hom(\underline{\hspace{3mm}}, \Z)$ and then $\Hom(\underline{\hspace{3mm}}, \C^*)$ induces
 \[
 1 \rightarrow \mu_{m_\gamma} \rightarrow \tilde{U}_\gamma \xrightarrow{\pi_\gamma} U_\gamma \rightarrow 1.
 \]
 Letting $M = \Hom(N, \Z)$ and $M_\gamma = \Hom(N_\gamma,\Z)$, we identify $\tilde{U}_\gamma = \Hom(M_\gamma, \C^*)$. Then the above sequence says that the rational point $[-\gamma] \in (N_\gamma)_\Q$ corresponds to
 an element  $\exp(2\pi \sqrt{-1} [-\gamma]) \in \tilde{U}_\gamma$ of order $m_\gamma$
 such that   $U_\gamma$ is the quotient of $\tilde{U}_\gamma$ by the action of  multiplication by $\exp(2\pi \sqrt{-1} [-\gamma])$.
 In what follows, we let $T = \Hom(M, \C^*)$ and consider
the multiplication map $T \times \tilde{U}_\gamma \rightarrow \tilde{U}_\gamma$. We also fix the distinguished element $p \in \tilde{U}_\gamma= \Hom(M_\gamma, \C^*)$ corresponding to the constant map $1$.

Recall that $X^\circ$ may  viewed as a family of complex varieties over $\C^* \setminus \{ b_1, \ldots , b_r \}$, for some points $b_1,\ldots, b_r$. Let $\D$ be a sufficiently small complex disc centered at the origin, and
restrict $X^\circ$ to a family over the punctured disc $\D^*$, and $\tilde{\P}$ and $\P$ to families over $\D$. Taking the closure $\mathcal{X}$ of $X^\circ$ in $\P$ induces a commutative diagram:
\[\xymatrix{
\tilde{\mathcal{X}} \subseteq \tilde{\P}  \ar[d] \ar[r]^{\pi}  & \mathcal{X} \subseteq \P  \ar[d] \\
\D \ar[r]^{t \mapsto t^m}  & \D, \\
}\]
where for each bounded cell $\gamma \in \Sigma$, we have a corresponding $\mu_{m_\gamma}$-covering
 $\tilde{V}^\circ_\gamma := \tilde{\mathcal{X}} \cap \tilde{U}_\gamma \rightarrow V^\circ_\gamma := \mathcal{X} \cap U_\gamma$ corresponding to the action of multiplication by $\exp(2\pi \sqrt{-1} [-\gamma])$.
As above, consider the induced multiplication map $m: T \times \tilde{V}^\circ_\gamma \rightarrow  \tilde{U}_\gamma$ and consider the fiber $m^{-1}(p)$. By \cite[p. 8]{HelmKatz}, projection onto
$T$ identifies $m^{-1}(p)$ with $\init_\gamma X^\circ$, while projection onto $\tilde{V}^\circ_\gamma$ identifies $m^{-1}(p)$ with $T_\gamma \times  \tilde{V}^\circ_\gamma$, where $T_\gamma$ is
the subtorus that fixes $U_\gamma$ pointwise. Fix a rational point $w \in N_\Q$ in the relative interior of $\gamma$, and consider the corresponding point $\exp(2\pi \sqrt{-1} w) \in T$ of finite order. Then
multiplication by $(\exp( 2\pi \sqrt{-1} w), \exp(2\pi \sqrt{-1} [-\gamma])$ preserves  $m^{-1}(p)$, and, using Example~\ref{e:torus}, we conclude
that
\begin{equation}\label{e:different}
[\init_\gamma X^\circ \circlearrowleft \hat{\mu}] = [ \tilde{V}^\circ_\gamma  \circlearrowleft \hat{\mu}] (\L - 1)^{\dim \gamma},
\end{equation}
where $\hat{\mu}$ acts via multiplication by $\exp(2\pi \sqrt{-1} w)$ and $\exp(2\pi \sqrt{-1} [-\gamma])$ respectively. In particular, this shows that the left hand side is independent of the choice of $w$ in the relative
interior of $\gamma$.

We now complete the proof. We first verify that the right hand side of the statement in the theorem is independent of the choice of polyhedral structure $\Sigma$.
Indeed, if $\Sigma'$ is a polyhedral structure refining $\Sigma$, then, using the independence of the choice of $w$ above
\[
\sum_{\substack{\gamma' \in\Sigma' \\\gamma \operatorname{bounded}}}\, (-1)^{\dim \gamma'} [\init_{\gamma'} X^\circ \circlearrowleft \hat{\mu}] =
\sum_{\gamma \in \Sigma} (-1)^{\dim \gamma} [\init_{\gamma} X^\circ \circlearrowleft \hat{\mu}]  \left(\sum_{\substack{\gamma' \in \Sigma', \gamma' \operatorname{bounded}\\ \Int(\gamma') \subseteq \Int(\gamma) }} (-1)^{\dim \gamma - \dim \gamma'}\right).
\]
Hence, it is enough to show that
\[
 \sum_{\substack{\gamma' \in \Sigma', \gamma' \operatorname{bounded}\\ \Int(\gamma') \subseteq \Int(\gamma) }} (-1)^{\dim \gamma'}  =  \left\{\begin{array}{cl} (-1)^{\dim \gamma} & \text{if } \gamma \text{ is bounded } \\ 0 & \text{otherwise}. \end{array}\right.
\]
Topological and combinatorial proofs of the above fact are given in the proofs of \cite[Lemma~2.2]{Geometry} and \cite[Theorem~3.4]{KatzStapledon} respectively.

After possibly replacing $\tilde{\Sigma}$ with a smooth fan refinement, and replacing $\Sigma$ with $\tilde{\Sigma} \cap (N_\R \times \{ 1 \})$, we may assume that $\P$ is smooth and the central fiber $\mathcal{X}_0$
is a simple normal crossings divisor. 
Let $\{v_1, \ldots, v_s \}$ denote the vertices of $\gamma$. Then $\mathcal{X}_0$ has irreducible components $\{ D_1, \ldots , D_s \}$ with multiplicities $\{ m_1, \ldots, m_s \}$ respectively, where
$m_i$ is the smallest positive integer $m$ such that $mv_i \in N$. For every non-empty subset $I \subseteq \{ 1,\ldots s \}$, let $\gamma_I$ be the bounded cell of $\Sigma$ given by the
convex hull of $\{ v_i \mid i \in I \}$. Observe that every bounded cell appears in this way. As above, $D_I^\circ = \cap_{i \in I} D_i \setminus \cup_{j \notin I} D_j = V^\circ_\gamma$ admits an $\mu_{m_\gamma}$-covering
$\tilde{V}^\circ_\gamma  \rightarrow V^\circ_\gamma$ corresponding to the action of multiplication by $\exp(2\pi \sqrt{-1} [-\gamma])$. It follows from the fact that $C_\gamma$ is a smooth cone in $N \times \Z$ that $m_\gamma = \gcd(m_i \mid i \in I )$. The result now follows from \eqref{e:different} above together with the definition of the motivic nearby fiber given in \eqref{e:dmotivic}.
\end{proof}

\begin{remark}
It is a corollary of the proof above that in the statement of Theorem~\ref{t:comp}, one may consider  the action on $\init_\gamma X^\circ \subseteq (\C^*)^n$ to be induced by multiplication by $\exp(2\pi \sqrt{-1} w)$ for any fixed choice of $w \in \Q^n$ in the affine span of $\gamma$.
\end{remark}

We will see interesting examples of the above theorem in Section~\ref{s:hyper} and Section~\ref{s:sing}.
For the moment, we have the following simple example.

\begin{example}\label{e:simple2}
Following on with Example~\ref{e:simple}, let $X^\circ = \{ \sum_{i  = 1}^n 
x_i^{m_i} = t^e \} \subseteq (\K^*)^n$ for some  
$m_i \in \Z_{>0}$ and $e \in \Z$. Then $\psi_{X^\circ} = [\{ \sum_{i  = 1}^n 
x_i^{m_i}  = 1 \} \subseteq (\C^*)^n \circlearrowleft \hat{\mu}]$, with $\hat{\mu}$-action
induced  by multiplication by $(e^{2\pi \sqrt{-1} e/m_1}, \ldots,e^{2\pi \sqrt{-1} e/m_n})$. If $X=  \{ \sum_{i  = 1}^n 
x_i^{m_i} = t^e \} \subseteq \K^n$  denotes the closure of $X^\circ$ in $\K^n$,
then  $\psi_{X} = [\{ \sum_{i  = 1}^n 
x_i^{m_i}  = 1 \} \subseteq \C^n \circlearrowleft \hat{\mu}]$, with action again given by multiplication by $(e^{2\pi \sqrt{-1} e/m_1}, \ldots,e^{2\pi \sqrt{-1} e/m_n})$.
As in  Example~\ref{e:simple}, the above action coincides with the action induced by monodromy.

\end{example}


\section{The combinatorics of height functions
}\label{s:weighted}

In this section, we present the relevant combinatorics of this paper. Weighted Ehrhart theory was introduced by the author in  \cite{Weighted} both in order to  extend and reprove many results in
Ehrhart theory, the study of enumeration of lattice points in polytopes, and in order to give explicit computations of  motivic integrals on toric stacks \cite{Stacks}. We present a generalization of this theory below using
an extension of Stanley's theory of subdivisions \cite{StaSubdivisions} presented in \cite{Combinatorics}. For brevity, we will only present the relevant results and refer the reader to \cite{Combinatorics}  and  \cite{Weighted} for details of proofs.




\subsection{Eulerian posets}

We first recall some basic notions on Eulerian posets.
Consider a finite poset  $B$  containing a minimal element $\hat{0}$ and a maximal element $\hat{1}$. For any pair $z \leq x$ in $B$, we can consider the interval $[z,x] = \{ y \in B \mid z \leq y \leq x \}$.
Assume that $B$ is \emph{graded} in the sense that for every $x \in B$, every maximal chain in
 the interval $[\hat{0},x]$ has the same length $\rho(x)$. We call $\rho: B \rightarrow \N$ the \emph{rank function} of $B$, and call $\rho(\hat{1})$ the \emph{rank} of $B$. Then $B$ is \define{Eulerian} if every interval $[z,x]$ with $z < x$ has as many elements of odd rank as even rank.

\begin{example}\label{e:polytope}
The poset of faces of a polytope $P$ (including the empty face) is an Eulerian poset under inclusion,
with $\rho(Q) = \dim Q + 1$, for any face $Q$ of $P$. \emph{Throughout this paper, the empty polytope has dimension $-1$. }
\end{example}

\begin{example}
If $B$ is a poset, then $B^*$ is the poset with the same elements as $B$ and all orderings reversed. In particular, $B$ is Eulerian if and only if $B^*$ is Eulerian.
\end{example}

The \define{$g$-polynomial} of an Eulerian poset is defined recursively and was introduced by Stanley \cite[Corollary~6.7]{StaSubdivisions}.

\begin{definition}\label{d:g}
Let $B$ be an Eulerian poset of rank $n$. If $n = 0$, then $g(B;t) = 1$. If $n > 0$, then $g(B;t)$ is the unique polynomial
of degree strictly less the $n/2$ satisfying
\[
t^{n}g(B;t^{-1}) = \sum_{x \in B} (t - 1)^{n - \rho(x)} g([\hat{0},x];t).
\]
\end{definition}

\begin{example}\label{e:linear}
By considering the leading terms above, we see that $g(B;t)$ has constant term $1$, and  linear term $\# \{ x \in B \mid \rho(x) = 1 \} - n$.
\end{example}

\begin{example}\label{e:boolean}
One can verify that $g(B;t) = 1$ if and only if $B$ is the Boolean algebra on $r$ elements for some $r$ \cite[Remark~4.2]{BNCombinatorial}.
\end{example}

We will need  the following theorem of Stanley.

\begin{theorem}   \label{t:Eulerianinverse}\cite[Corollary~8.3]{StaSubdivisions}
If $B$ is Eulerian and has positive rank, then
\[
\sum_{x \in B} (-1)^{\rho(x)} g([\hat{0},x];t)g([x,\hat{1}]^*;t) = \sum_{x \in B} (-1)^{\rho(x)} g([\hat{0},x]^*;t)g([x,\hat{1}];t)  =  0.
\]
\end{theorem}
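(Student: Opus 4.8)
The plan is to derive the identity from the defining recursion for the $g$-polynomial together with the Eulerian property, following Stanley's original argument for \cite[Corollary~8.3]{StaSubdivisions}. The two stated equalities are related by passing to the dual poset $B^*$: replacing $B$ by $B^*$ interchanges $[\hat0,x]$ with $[x,\hat1]^*$ and $[x,\hat1]^*$ with $[\hat0,x]$ (up to the order reversal implicit in the interval), and sends $\rho(x)$ to $n-\rho(x)$, which only changes the overall sign of each summand by $(-1)^n$ — a global constant. So it suffices to prove that one of the two sums vanishes, say $\sum_{x\in B}(-1)^{\rho(x)}g([\hat0,x];t)g([x,\hat1]^*;t)=0$, and the other follows formally. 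Here one must be slightly careful that $g$ is defined for posets of positive rank via Definition~\ref{d:g} and equals $1$ in rank $0$; the rank-$0$ intervals $[\hat1,\hat1]$ and $[\hat0,\hat0]$ contribute the terms $x=\hat1$ and $x=\hat0$ respectively.

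First I would set up the generating-function bookkeeping. For an Eulerian poset $C$ of rank $m$, introduce the "$h$-polynomial" style object $f(C;t) := \sum_{x\in C}(t-1)^{m-\rho(x)}g([\hat0,x];t)$, so that Definition~\ref{d:g} reads $f(C;t)=t^m g(C;t^{-1})$. The key structural input is Stanley's reciprocity / inversion: for a graded poset, one can invert the relation expressing $f$ in terms of $g$ over all lower intervals, and the Eulerian condition forces the Möbius function of $B$ to satisfy $\mu(z,x)=(-1)^{\rho(x)-\rho(z)}$. Concretely, I would consider the double sum
\[
S(t) := \sum_{x\in B}(-1)^{\rho(x)} g([\hat0,x];t)\, g([x,\hat1]^*;t)
\]
and expand $g([x,\hat1]^*;t)$ using its own defining recursion applied in the interval $[x,\hat1]$ (whose rank is $n-\rho(x)$): writing $g([x,\hat1]^*;t)$ via the relation $t^{n-\rho(x)}g([x,\hat1]^*;t^{-1}) = \sum_{y\ge x}(t-1)^{n-\rho(y)}g([x,y];t)$ and then substituting $t\mapsto t^{-1}$, I get an expression for $g([x,\hat1]^*;t)$ as a signed sum over $y\ge x$ of products $g([x,y];t)$ times powers of $(1-t)$. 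Plugging this in and swapping the order of summation, $S(t)$ becomes $\sum_{y\in B}(\text{power of }1-t)\,\big[\sum_{x\le y}(-1)^{\rho(x)}g([\hat0,x];t)g([x,y];t)\big]\cdot t^{-(\dots)}$ or similar; the inner bracket should be recognized — again by the $g$-recursion applied to $[\hat0,y]$ combined with the Eulerian sign cancellation — as vanishing for $y\ne\hat0$ and producing the required total.

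The main obstacle is organizing the interchange of summations cleanly so that the inner sums collapse: one needs the identity that for any Eulerian interval $[\hat0,y]$ of positive rank, $\sum_{x\le y}(-1)^{\rho(x)} g([\hat0,x];t) g([x,y];t) = 0$, which is itself essentially an instance of the statement being proved (or a known lemma of Stanley about the $g$-polynomial behaving like an "acceptable function" under the Eulerian Möbius relation). Rather than rederiving that from scratch, I would cite the relevant inversion lemma from \cite{StaSubdivisions} (or \cite{BNCombinatorial}) and feed it into the double-sum manipulation; the bulk of the work is then the routine but fiddly reindexing and checking the boundary/rank-zero cases. An alternative, if the direct combinatorial manipulation gets unwieldy, is to induct on the rank $n$: assume the identity for all Eulerian posets of rank $<n$, peel off the top and bottom elements $x=\hat0,\hat1$ from $S(t)$, use the recursion for $g(B;t)$ and $g(B^*;t)$ to rewrite those two boundary terms, and invoke the inductive hypothesis on each proper interval $[\hat0,x]$ and $[x,\hat1]$ for $\hat0<x<\hat1$ to cancel the interior terms. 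Either way the Eulerian hypothesis enters exactly once, through the sign $(-1)^{\rho(x)}$ matching the Möbius function, and that is the crux.
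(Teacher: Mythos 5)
The paper does not prove this theorem --- it is imported verbatim from Stanley \cite[Corollary~8.3]{StaSubdivisions} --- so there is no in-paper proof to compare against. Your proposal, however, has a genuine gap, and the root cause is losing track of which factor carries the duality star. First, the opening reduction is wrong: passing from $B$ to $B^*$ sends $[\hat0,x]_{B^*}$ to $([x,\hat1]_B)^*$ and $([x,\hat1]_{B^*})^*$ to $[\hat0,x]_B$, so the product $g([\hat0,x];t)\,g([x,\hat1]^*;t)$ is carried (after commuting the two factors and a global sign $(-1)^n$) back to itself; dualization relates each of the two sums to itself, not to the other, so the second identity does not follow formally from the first. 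Second, the recursion you write down for $g([x,\hat1]^*;t)$ is in fact the defining recursion for $g([x,\hat1];t)$; the correct recursion for the dual interval $[x,\hat1]^*$ expands $g([x,\hat1]^*;t)$ in terms of $g([y,\hat1]^*;t)$ for $y\ge x$, not in terms of $g([x,y];t)$.

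Third, and fatally, the unstarred inner identity you lean on,
\[
\sum_{x\le y}(-1)^{\rho(x)}g([\hat0,x];t)\,g([x,y];t) = 0,
\]
is false. Take $B$ to be the face lattice of the triangular prism, of rank $4$: here $g(B;t)=1+2t$, the three quadrilateral $2$-faces $F$ have $g([\hat0,F];t)=1+t$, all other proper lower and upper interval $g$-polynomials equal $1$, and one computes
\[
\sum_{x\in B}(-1)^{\rho(x)}g([\hat0,x];t)\,g([x,\hat1];t) = (1+2t)-6+9-2-3(1+t)+(1+2t) = t \ne 0,
\]
whereas the starred sum of the theorem does vanish, because $g(B^*;t)=1+t$ (the dual $B^*$ being the face lattice of the triangular bipyramid). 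The star on exactly one factor is precisely what makes the alternating sum collapse; a correct argument must expand the starred factor via the recursion taken in the dual poset, or invoke Stanley's incidence-algebra convolution-inverse formulation directly, and must keep the star in place throughout --- it cannot be dropped in the inner sums as your manipulation does.
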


\subsection{Polyhedral subdivisions}\label{subdivisions}

A \define{polyhedral subdivision} of a polytope $P\subseteq\R^n$ is a subdivision of $P$ into a finite number of polytopes such that the intersection of any two polytopes is a (possibly empty) face of both.
A polyhedral subdivision is a rational  (respectively lattice)  subdivision if each polytope has rational (respectively integral) vertices.
A natural class of lattice polyhedral subdivisions are the \define{regular} lattice subdivisions.   They are induced by a function $\omega: S \rightarrow \Z$, called an \define{integral height function} , for some finite subset $S \subseteq P \cap \Z^n$ containing the vertices of $P$.
The cells of the subdivision are the projections
of the bounded faces of the convex hull $\operatorname{UH}$
of
$\{ (u, \lambda) \mid u \in S,  \lambda \geq\omega(u)  \} \subseteq \R^n \times \R$.  The bounded faces of $\operatorname{UH}$ form the graph
of a function
$\nu: P \rightarrow \R$, called the  \define{convex graph} of $\omega$.  Note that $\nu|_S$ is not necessarily the same as $\omega|_S$.
Let $\cS(\nu)$ denote the regular lattice polyhedral subdivision induced by $\omega$. Then $\nu$ is convex and piecewise $\Q$-affine with respect to $\cS(\nu)$, and takes integer values on the vertices of $\cS(\nu)$.
Conversely, if $\nu: P \rightarrow \R$ is a  convex function that is  piecewise $\Q$-affine with respect to a lattice polyhedral subdivision  $\cS$, such that $\nu$ takes integer values on the vertices of $\cS$, then
the restriction of $\nu$ to the vertices of $\cS$  is an integral height function such that $\nu$ is the corresponding convex graph and $\cS$ is a refinement of $\cS(\nu)$.
 For more details, including the general definition of a regular polyhedral subdivision,  see \cite{Triangulations,GKZ}.

By abuse of notation, we write $\cS$ for the poset of faces (including the empty face) of a 
polyhedral subdivision $\cS$ of $P$. Let $[\emptyset, P]$ denote the face poset of $P$. In what follows, we will consider the
function
\[
\sigma: \cS \rightarrow [\emptyset, P],
\]
where $\sigma(\emptyset) = \emptyset$, and for a non-empty cell $F \in \cS$,  $\sigma(F)$ is the smallest face of $P$ containing $F$.
As a poset, the \define{link} $\lk_\cS(F)$  of $F$ in $\cS$ consists of all
cells $F'$ of $\cS$ that contain $F$. 

We will need the following two closely related polynomials.
In the case $F = \emptyset$,  the 
local $h$-polynomial defined below  was   introduced by Stanley in \cite[Corollary~7.7]{StaSubdivisions}.
In the case of a triangulation of a simplex, it was introduced by Athanasiadis \cite{AthFlag, AthSymmetric}, and the general case of a polyhedral subdivision of a polytope was first considered by Nill and Schepers  \cite{NSCombinatorial}, where it was called the `relative local $h$-polynomial'.
A more general definition of the 
local $h$-polynomial appears in \cite[Definition~4.1]{Combinatorics}.

\begin{definition}
Let $\cS$ be a 
polyhedral subdivision of a 
polytope $P$. If $F$ is a  (possibly empty) cell of $\cS$, then the \define{$h$-polynomial} of $\lk_\cS(F)$ is defined by
\[
t^{\dim P - \dim F}h(\lk_\cS(F);t^{-1}) = \sum_{\substack{F' \in \cS \\\ F \subseteq F'}} g([F,F'];t)(t - 1)^{\dim P - \dim F'}.
\]
The  \define{
local $h$-polynomial} of $F$ in $\cS$ is the polynomial
\[
l_P(\cS,F;t)
= \sum_{\sigma(F) \subseteq Q \subseteq P} (-1)^{\dim P - \dim Q} h(\lk_{\cS|_Q}(F);t) g([Q,P]^*;t).
\]
When $F = \emptyset$, we write $h(\cS;t)  = h(\lk_\cS(F);t)$ and $l_P(\cS;t) = l_P(\cS,F;t)$.
\end{definition}

\begin{example}\label{e:trivialdecomp}
 If $\cS$ is the trivial subdivision of $P$, with cells of $\cS$ given by the faces of $P$, then one can verify that
 $h(\lk_{\cS}(Q);t) = g([Q,P];t)$ and 
 $l_P(\cS,Q;t) = 0$ whenever $Q$ is a (possibly empty) proper face of $P$.
\end{example}

We state some properties of these polynomials below. 
The non-negativity of the $g$-polynomial and $h$-polynomial and the 
local $h$-polynomial when $F = \emptyset$  is due to Stanley \cite[Theorem~5.2,Theorem~7.9]{StaSubdivisions}.

\begin{remark}\label{r:non-negative}\cite[Theorem~6.1]{Combinatorics}
Let $\cS$ be a rational polyhedral subdivision of a lattice polytope $P$. For any cell $F \in \cS$ and face $Q$ of $P$, the polynomials $g([Q,P]^*;t)$,
$g([Q,P];t)$, $h(\lk_\cS(F);t)$ and $l_P(\cS,F;t)$ have non-negative coefficients. We have the symmetry
\[
l_P(\cS,F;t) = t^{\dim P - \dim F} l_P(\cS,F;t^{-1}).
\]
If $\cS$ is a regular subdivision, then the coefficients of $l_P(\cS,F;t)$ are symmetric and unimodal.
\end{remark}

In order that one may compute examples, we list some of the coefficients below (see \cite[Example~3.13,Remark~3.32,Example~4.9,Example~4.10]{Combinatorics}).

\begin{example}\label{e:relative}
Let $\cS$ be a 
polyhedral subdivision of a 
polytope $P$. Let $F$ be a cell of $\cS$. Let
\[
\lambda =  \# \{ F \subseteq F' \mid \dim F' = \dim F + 1 \},
\]
\[
\lambda' =  \# \{ F \subseteq F' \mid \dim F' = \dim F + 1, \sigma(F') = P \}.
\]
The constant term of $h(\lk_\cS(F);t)$ is $1$, and the linear coefficient is $\lambda - (\dim P - \dim F)$.
 If $\sigma(F) = P$, then $h(\lk_\cS(F);t) = l_P(\cS,F;t)$
is symmetric. Assume that $\sigma(F) \ne P$. Then $h(\lk_\cS(F);t)$ has degree at most $\dim P - \dim F - 1$,
and the coefficient of $t^{\dim P - \dim F - 1}$ is  $\lambda'$. Moreover, $l_P(\cS,F;t)$ is symmetric with constant term $0$ and
linear term
\[
   \left\{\begin{array}{cl} \lambda' - 1 & \text{if } \dim P - \dim \sigma(F) = 1 \\ \lambda' & \text{if } \dim P - \dim \sigma(F) > 1 . \end{array}\right.
\]
\end{example}

\subsection{Weighted Ehrhart theory}

Fix a lattice polytope $P$ with respect to a lattice $M$. After possibly replacing $M$ with a sublattice, we may assume that $\dim P = \dim M_\R$.
Under an isomorphism $M \cong \Z^n$, the \define{normalized volume} $\Vol(P) \in \Z_{> 0}$ of $P$ with respect to $M$ is $n!$ times the Euclidean volume of $P$.
Let $\nu: P \rightarrow \R$ be the convex graph of an integral height function on $P$, with corresponding regular lattice polyhedral subdivision $\cS(\nu)$ of $P$.

\begin{remark}\label{r:old}
The setup above is slightly different than the `weight $0$' setup in \cite{Weighted}, although the same arguments hold.  We explain the relationship below.
In  \cite{Weighted}, we consider a stacky fan $(N,\Sigma,\{ b_i \})$, where $\Sigma$ is a full-dimensional simplicial fan with respect to a lattice $N$, and $\{ b_i \}$ is a choice of a non-zero lattice
point on each ray of $\Sigma$. We let  $\psi$ be the piecewise $\Q$-linear function with respect to $\Sigma$ satisfying $\psi(b_i) = 1$,   and let $Q = \{ v \in N_\R \mid \psi(v) \le 1 \}$.
If $Q$ is a convex polytope, then we may set $M = N$, $P = Q$ and $\nu = \psi$ above to recover this setup.
Here $\Sigma$ induces a triangulation $\cS$ of $P$ refining $\cS(\nu)$ such that  $\nu$ takes integer values on the vertices of $\cS$. We also note that the definition of the weighted $h^*$-polynomial below is slightly different (although equivalent) to that in \cite{Weighted} (see
Example~\ref{e:orbifold}).
\end{remark}




Let $C$ be the cone over $P \times \{ 1\}$
in $M_\R \times \R$,  and
extend $\nu$ by linearity to a piecewise $\Q$-linear function on $C$.  For a non-negative integer $m$, we will often identify $mP$ with $C \cap (M_\R \times \{ m \})$ and $M_\R$ with $M_\R \times \{ m \}$.
We define a \define{weight function}
\[
w: C \cap (M \times \Z) \rightarrow \Q/\Z,
\]
\[
w(v) = [\nu(v)].
\]
Note that adding a global affine function with respect to $M$ to $\nu$ does not change  $w$.
Recall from Remark~\ref{r:conjugate} that the group algebra $\Z[\Q/\Z]$ admits a conjugation action such that $\bar{[k]} = [-k]$, and a `forgetful' $\Z$-algebra homorphism $\Z[\Q/\Z] \rightarrow \Z, [k] \mapsto 1$.
The \define{weighted Ehrhart polynomial} is defined by
\[
f(P,\nu;m) = \sum_{v \in mP \cap M } 
w(v) \in \Z[\Q/\Z],
\]
for  every non-negative integer $m$. 
As in Remark~\ref{r:forget}, the corresponding polynomial with coefficients in $\Z$ is the usual \define{Ehrhart polynomial} of $P$  \cite{Ehr1,Ehr2}, defined by
$f(P;m) = \#(mP \cap M )$  for every non-negative integer $m$. By  \cite[Theorem~3.7]{Weighted}, $f(P,\nu;m)$ is a polynomial in $m$ of degree $\dim P$, and  for every positive integer $m$,
\[
(-1)^{\dim P} f(P,\nu;-m) = \sum_{v \in \Int(mP) \cap M} \bar{w(v)} \in \Z[\Q/\Z].
\]
 The latter result is known as \define{weighted Ehrhart reciprocity}, since it reduces to the classical Ehrhart reciprocity: $f(P;-m) = \#(\Int(mP) \cap M )$ for every positive integer $m$ \cite{EhrReciprocity}.
 In fact, it is shown in \cite[Remark~3.10]{Weighted} that weighted Ehrhart reciprocity generalizes Ehrhart reciprocity for rational polytopes. Let $\cS$ be a lattice polyhedral decomposition refining
 $\cS(\nu)$  such that $\nu$ takes integer values on the vertices of $\cS$.  
 For each maximal cell $F \in \cS$, let $m_F$ be the minimal positive integer such that  $m_F \nu|_{F}$ is an affine 
 function with respect to $M$. Then one verifies that the
 leading coefficient of $f(P,\nu;m)$ as a polynomial in $m$ is given by:
 \begin{equation}\label{e:leading}
\frac{1}{(\dim P)!} \sum_{ \substack{F \in \cS \\ \dim F = \dim P} }  \frac{\Vol(F)}{m_F} \sum_{i = 0}^{m_F - 1} [i/m_F] \in \Z[\Q/\Z],
 \end{equation}
 where $\Vol(F)$ is the normalized volume of $F$ and $\Vol(F)/m_F$ is a positive integer.
Define the \define{weighted $h^*$-polynomial} $ h^*(P,\nu;u) \in \Z[\Q/\Z][u]$ by
\[
\sum_{m \ge 0} f(P,\nu;m) u^m = \frac{ h^*(P,\nu;u)  }{(1 - u)^{\dim P + 1}}.
\]
When $P$ is the empty polytope, we set $h^*(P,\nu;u) = 1$.
As in Remark~\ref{r:forget}, the corresponding  polynomial with coefficients in $\Z$ is the usual \define{$h^*$-polynomial} $h^*(P;u) \in \Z[u]$ of $P$ introduced by Stanley in   \cite{StaDecompositions}.
We define the \define{local  weighted $h^*$-polynomial} $\lc(P,\nu;u) \in \Z[\Q/\Z][u]$ by
\[
\lc(P,\nu;u) = \sum_{Q \subseteq P} (-1)^{\dim P - \dim Q}h^*(Q,\nu|_Q;u)g([Q,P]^*;u).
\]
Here the sum runs over all faces of $P$ including the empty face. When $P$ is the empty polytope, we set $l^*(P,\nu;u) = 1$.
As in Remark~\ref{r:forget}, the corresponding polynomial with coefficients in $\Z$ is the usual \define{local $h^*$-polynomial} $\lc(P;u) \in \Z[u]$ of $P$,
 introduced by Stanley in \cite[Example~7.13]{StaSubdivisions}, and later independently by Borisov and Mavlyutov in \cite{BMString}.
By Theorem~\ref{t:Eulerianinverse} and Remark~\ref{r:non-negative},
the  weighted $h^*$-polynomial can be recovered as a `non-negative' combination of local weighted  $h^*$-polynomials:
\[
h^*(P,\nu;u) = \sum_{Q \subseteq P} \lc(Q,\nu|_Q;u)g([Q,P];u).
\]

We present some properties and examples below. For the corresponding invariants over $\Z$, proofs of the properties below can be found in   \cite{BNCombinatorial} and \cite{Combinatorics}.

\begin{example}\label{e:volume}
One may verify that \eqref{e:leading} translates into the following:
\[
h^*(P,\nu;1) = \sum_{ \substack{F \in \cS \\ \dim F = \dim P} }  \frac{\Vol(F)}{m_F} \sum_{i = 0}^{m_F - 1} [i/m_F] \in \Z[\Q/\Z],
\]
where $m_F$ is the minimal positive integer such that  $m_F \nu|_{F}$ is an affine 
function with respect to $M$.
\end{example}

\begin{example}\label{e:simplex}
Suppose that $P$ is a simplex and $\nu$ is $\Q$-affine, and let $v_0, \ldots, v_n$ denote the primitive integer vectors on the rays of $C$ corresponding to the vertices of $P$.
Let $\bx = \{ v \in C \cap (M \times \Z) \mid v = \sum_{i = 0}^n a_i v_i, 0 \le a_i < 1 \}$.
If
$\pr: N_\R \times \R \rightarrow \R$ denotes projection onto the second coordinate,
then
\[
h^*(P,\nu;u) = \sum_{v \in \bx} w(v) u^{\pr(v)} \in \Z[\Q/\Z][u],
\]
\[
\lc(P,\nu;u) = \sum_{v \in \bx \cap \Int(C)} w(v) u^{\pr(v)} \in \Z[\Q/\Z][u].
\]
\end{example}

\begin{remark}\label{r:symmetry}
One may verify that weighted Ehrhart reciprocity is equivalent to the following statements about the weighted $h^*$-polynomial and local weighted $h^*$-polynomial respectively
(cf. \cite[Lemma~7.11]{Combinatorics}).
Firstly, the weighted $h^*$-polynomial is \define{acceptable} in the sense of Stanley \cite{StaSubdivisions}. That is,
\[
u^{\dim P + 1} \bar{h^*(P,\nu;u^{-1})}   
= \sum_{Q \subseteq P} h^*(Q,\nu|_Q;u)(u - 1)^{\dim P - \dim Q},
\]
where the above sum ranges over all (possibly empty) faces $Q$ of $P$. Secondly, the local weighted $h^*$-polynomial is symmetric in the sense that
\[
\lc(P,\nu;u) = u^{\dim P + 1} \bar{\lc(P,\nu;u^{-1})}.
\]
\end{remark}

\begin{example}\label{e:weightedlocal}
If  $P$ is non-empty, then
$h^*(P,\nu;u)$ and $\lc(P,\nu;u)$ have degree in $u$ at most $\dim P$, with the coefficient of $u^{\dim P}$ in both cases equal to
$\sum_{ v \in \Int(P) \cap M } \bar{w(v)}$.
By the symmetry in Remark~\ref{r:symmetry} above, this implies that $\lc(P,\nu;u)$ has constant term $0$ and linear coefficient $\sum_{ v \in \Int(P) \cap M } w(v)$.
The polynomial $h^*(P,\nu;u)$ has constant term $1$ and linear coefficient $\sum_{v \in P \cap M} w(v) - \dim P - 1$.

\end{example}

\begin{remark}\label{r:decompose}
Let $\cS$ be a lattice polyhedral subdivision of $P$ that refines $\cS(\nu)$ such that $\nu$ takes integer values on the vertices of $\cS$. 
Then it follows from the definitions that for every positive integer $m$,
\[
f(P,\nu;m) = \sum_{ F \in \cS } \sum_{v \in \Int(mF) \cap M} w(v).
\]
Using weighted Ehrhart reciprocity and following the proof of \cite[Lemma~7.12]{Combinatorics}, the above statement is equivalent to any of the three statements below:
\[
h^*(P,\nu;u) = \sum_{ \substack{ F \in \cS \\ \sigma(F) = P }}  h^*(F,\nu|_F;u) (u - 1)^{\dim P - \dim F},
\]
\[
h^*(P,\nu;u) = \sum_{ F \in \cS }  \lc(F,\nu|_F;u)  h(\lk_\cS(F);u),
\]
\[
\lc(P,\nu;u) = \sum_{ F \in \cS }  \lc(F,\nu|_F;u) l_P(\cS,F;u).
\]


\end{remark}

\begin{remark}\label{r:non-negative2}
One may always choose a lattice triangulation $\cS$ of $P$ that refines $\cS(\nu)$ such that $\nu$ takes integer values on the vertices of $\cS$. In fact, there exists a choice with the same vertices as $\cS(\nu)$ (see, for example, \cite{OPLinear}).
Then Remark~\ref{r:decompose} together with Example~\ref{e:simplex} give  explicit formulas that one may use to compute $h^*(P,\nu;u)$ and $\lc(P,\nu;u)$ in practice. In particular, using
Remark~\ref{r:non-negative},
it follows that all integer coefficients in  $h^*(P,\nu;u)$ and $\lc(P,\nu;u)$ are non-negative.
\end{remark}

\begin{remark}\label{r:lower}
The following lower bound theorem generalizes \cite[Theorem~7.20]{Combinatorics} in the case when $\nu \equiv 0$: if we write $\lc(P,\nu;u) = \sum_{k \in \Q/\Z} \lc(P,\nu;u)_{[k]} [k]$ and
$\lc(P,\nu;u)_{[0]} = \lc_{1}(P,\nu)_{[0]}u + \cdots + l_{\dim P}^*(P,\nu)_{[0]} u^{\dim P}$, then
\[
\lc_{1}(P,\nu)_{[0]} =  \# \{ v \in \Int(P) \cap \Z^d \mid w(v) = 0 \} \le \lc_{i}(P,\nu)_{[0]}
\]
for $1 \le i \le \dim P$. This follows by the proof of \cite[Theorem~7.20]{Combinatorics}. Explicitly,
consider a regular, lattice polyhedral subdivision $\cS$ of $P$ that refines $\cS(\nu)$ such that the lattice points of weight $0$ in $P$ are precisely the vertices of $\cS$. Then $\nu$ takes integer values on the vertices of $\cS$, and every positive-dimensional  cell in $\cS$ contains no interior lattice points of weight $0$.
By Remark~\ref{r:decompose} and Example~\ref{e:weightedlocal},
\[
\lc(P,\nu;u)_{[0]} = l_P(\cS;u) + \beta(u)u^2,
\]
where $l_P(\cS;u)$ has non-negative, symmetric, unimodal coefficients and the polynomial $\beta(u)$ has non-negative integer coefficients. The result follows.
\end{remark}

\begin{example}\label{e:old}
The following example appeared in \cite[Example~1.1]{Weighted}. Let  $M = \mathbb{Z}^{2}$ and let $P$ be the lattice polytope with vertices $(1,0)$,$(0,2)$,
$(-1,2)$, $(-2,1)$, $(-2,0)$ and $(0,-1)$. Let $\nu: P \rightarrow \R$ be the piecewise $\Q$-linear function satisfying $\nu(0) = 0$ and $\nu(v)  = 1$ for every vertex $v$ of $P$.
We write $f(P,\nu; m) = \sum_{[k] \in \Q/\Z}  f_{[k]}(P,\nu; m)[k]$ below.
Then the invariants above may be computed to be:


\[ h^*(P,\nu; u)  = 1+ 4u + u^{2}    + 2u(1 + u)[1/2] + u[2/3]  + u^2[1/3]      \]
\[ h^*(P; u)  =  1+ 7u + 4u^{2}   \]

\[ l^*(P,\nu; u)  = u(1 + u)(1 + 2[1/2]) + u[2/3] + u^2[1/3]    \]
\[ l^*(P; u)  = 4u(1 + u)  \]

\setlength{\unitlength}{1cm}
\begin{picture}(12,6.5)

\put(7,5){$f_{[0]}(P,\nu; m) = 3m^{2} + 3m + 1$}
\put(7,4){$f_{[1/2]}(P,\nu; m) = 2m^{2}$}
\put(7,3){$f_{[1/3]}(P,\nu; m) = m(m - 1)/2$}
\put(7,2){$f_{[2/3]}(P,\nu; m) = m(m + 1)/2$}
\put(7,1){$f(P; m) = 6m^{2} + 3m + 1$}

\put(0,0){\circle*{0.1}}
\put(0,1){\circle*{0.1}}
\put(0,2){\circle*{0.1}}
\put(0,3){\circle*{0.1}}
\put(0,4){\circle*{0.1}}
\put(0,5){\circle*{0.1}}
\put(0,6){\circle*{0.1}}
\put(1,0){\circle*{0.1}}
\put(1,1){\circle*{0.1}}
\put(1,2){\circle*{0.1}}
\put(1,3){\circle*{0.1}}
\put(1,4){\circle*{0.1}}
\put(1,5){\circle*{0.1}}
\put(1,6){\circle*{0.1}}
\put(2,0){\circle*{0.1}}
\put(2,1){\circle*{0.1}}
\put(2,2){\circle*{0.1}}
\put(2,3){\circle*{0.1}}
\put(2,4){\circle*{0.1}}
\put(2,5){\circle*{0.1}}
\put(2,6){\circle*{0.1}}
\put(3,0){\circle*{0.1}}
\put(3,1){\circle*{0.1}}
\put(3,2){\circle*{0.1}}
\put(3,3){\circle*{0.1}}
\put(3,4){\circle*{0.1}}
\put(3,5){\circle*{0.1}}
\put(3,6){\circle*{0.1}}
\put(4,0){\circle*{0.1}}
\put(4,1){\circle*{0.1}}
\put(4,2){\circle*{0.1}}
\put(4,3){\circle*{0.1}}
\put(4,4){\circle*{0.1}}
\put(4,5){\circle*{0.1}}
\put(4,6){\circle*{0.1}}
\put(5,0){\circle*{0.1}}
\put(5,1){\circle*{0.1}}
\put(5,2){\circle*{0.1}}
\put(5,3){\circle*{0.1}}
\put(5,4){\circle*{0.1}}
\put(5,5){\circle*{0.1}}
\put(5,6){\circle*{0.1}}
\put(6,0){\circle*{0.1}}
\put(6,1){\circle*{0.1}}
\put(6,2){\circle*{0.1}}
\put(6,3){\circle*{0.1}}
\put(6,4){\circle*{0.1}}
\put(6,5){\circle*{0.1}}
\put(6,6){\circle*{0.1}}

\put(3.8,2.6){$[1/2]$}
\put(3.8,4.6){$[1/2]$}
\put(4.8,2.6){$[1/2]$}
\put(2.8,1.6){$[1/2]$}
\put(0.8,1.6){$[1/2]$}
\put(0.8,2.6){$[1/2]$}
\put(0.8,3.6){$[2/3]$}
\put(1.8,3.6){$[1/3]$}
\put(1.8,4.6){$[2/3]$}
\put(2.8,2.6){$[2/3]$}
\put(2.8,4.6){$[1/2]$}
\put(2.8,0.6){$[1/2]$}


\linethickness{0.075mm}
\put(4,0){\line(0,1){6}}
\put(0,2){\line(1,0){6}}
\put(4,2){\line(-1,2){2}}
\put(4,2){\line(-2,1){4}}

\linethickness{0.2mm}
\put(3,4){\line(1,0){1}}
\put(2,2){\line(0,1){1}}
\put(2,6){\line(1,0){2}}
\put(0,2){\line(0,1){2}}
\put(4,4){\line(1,-2){1}}
\put(5,2){\line(-1,-1){1}}
\put(2,2){\line(2,-1){2}}
\put(2,3){\line(1,1){1}}
\put(4,6){\line(1,-2){2}}
\put(6,2){\line(-1,-1){2}}
\put(0,2){\line(2,-1){4}}
\put(0,4){\line(1,1){2}}

\end{picture}

\end{example}

\subsection{Mixed invariants}\label{s:mixed} We introduce our main combinatorial invariants below.
As in Remark~\ref{r:forget}, the corresponding invariants with coefficients in $\Z$ first appeared in \cite{Combinatorics}.
A geometric description of these invariants is provided in Corollary~\ref{c:explicit}.

\begin{definition}\label{d:new}
Let $P$ be a lattice polytope and let $\nu: P \rightarrow \R$ be the convex graph of an integral height function on $P$,
with corresponding regular lattice polyhedral subdivision $\cS(\nu)$ of $P$.
Then the \define{weighted limit mixed $h^*$-polynomial} $h^*(P,\nu;u,v) \in \Z[\Q/\Z][u,v]$ of $(P,\nu)$ is
\[
h^*(P,\nu;u,v) := \sum_{F \in \cS(\nu)} v^{\dim F + 1}\lc(F, \nu|_F; uv^{-1})  h(\lk_{\cS(\nu)}(F);uv).
\]
The \define{local weighted  limit mixed $h^*$-polynomial} $\lc(P,\nu;u,v) \in \Z[\Q/\Z][u,v]$ of $(P,\nu)$ is
\[
\lc(P, \nu; u,v) :=   \sum_{F \in \cS(\nu)} v^{\dim F + 1}\lc(F, \nu|_F; uv^{-1})  l_P(\cS(\nu),F;uv). 
\]
The \define{weighted refined limit mixed $h^*$-polynomial} $h^*(P,\nu;u,v,w) \in \Z[\Q/\Z][u,v,w]$ of $(P,\nu)$ is
\[
h^*(P,\nu;u,v,w) = \sum_{Q \subseteq P} w^{\dim Q + 1}\lc(Q, \nu|_Q;u,v) g([Q,P];uvw^2).
\]
If $P$ is empty, then $h^*(P,\nu;u,v,w) = h^*(P,\nu;u,v) =  \lc(P, \nu;u,v)  = 1$.  We write
\[
h^*(P,\nu;u,v,w) = 1 + uvw^2 \cdot \sum_{0 \le p,q,r \le \dim P  - 1} h^*_{p,q,r}(P,\nu) u^{p}v^{q}w^r,
\]
where $h^*_{p,q,r}(P,\nu) = \sum_{[k] \in \Q/\Z} h^*_{p,q,r}(P,\nu)_{[k]} [k] \in \Z[\Q/\Z]$, for some $h^*_{p,q,r}(P,\nu)_{[k]} \in \Z$.
\end{definition}






We summarize the main properties of these invariants in the following theorem. 
They may all be deduced from the definitions and the properties in the previous sections, and hence we omit the proof.
We refer the reader to \cite[Theorem~9.2, Theorem~9.3,Theorem~9.4]{Combinatorics} for a similar statement for the corresponding  invariants with coefficients in $\Z$.

\begin{theorem}\label{t:combinatorics}
Let $P$ be a lattice polytope and let $\nu: P \rightarrow \R$ be the convex graph of an integral height function on $P$,
with corresponding regular lattice polyhedral subdivision $\cS(\nu)$ of $P$. Then the following holds:

\begin{enumerate}

\item\label{prop'}  The weighted refined limit mixed $h^*$-polynomial satisfies the following symmetries:
\[
h^*(P,\nu;u,v,w) = \bar{h^*(P,\nu;v,u,w)},
\]
\[
h^*(P,\nu;u,v,w) = \bar{h^*(P,\nu;u^{-1},v^{-1},uvw)}.
\]

\item\label{prop1} The  weighted refined limit mixed $h^*$-polynomial specializes to the weighted  limit mixed $h^*$-polynomial:
\[
h^*(P,\nu;u,v,1) = h^*(P,\nu;u,v).
\]


\item\label{prop3} The weighted refined limit mixed $h^*$-polynomial specializes to the weighted $h^*$-polynomial:
\[
h^*(P,\nu;u,1,1) = h^*(P,\nu;u,1) = h^*(P,\nu;u).
\]

\item\label{prop3'} The local weighted limit mixed $h^*$-polynomial specializes to the local weighted $h^*$-polynomial:
\[
\lc(P,\nu;u,1) = \lc(P,\nu;u).
\]

\item\label{prop4} The degree of $h^*(P,\nu;u,v,w)$ as a polynomial in $w$ is at most $\dim P + 1$.  Moreover, the coefficient of
$w^{\dim P + 1}$ is the local weighted limit mixed $h^*$-polynomial
$\lc(P, \nu; u,v)$. We have symmetries:
\[
\lc(P,\nu;u,v) = \bar{\lc(P,\nu;v,u)} = (uv)^{\dim P + 1} \bar{\lc(P,\nu;u^{-1},v^{-1})}.
\]

\item\label{prop111} We have the following:
\[
h^*(P, \nu;u,v,w) =  \sum_{Q \subseteq P}w^{\dim Q + 1} \lc(Q, \nu|_Q; u,v)   g([Q,P]; uvw^2),
\]
\[
w^{\dim P + 1} \lc(P, \nu; u,v) =  \sum_{Q \subseteq P} (-1)^{\dim P - \dim Q}h^*(Q, \nu|_Q;u,v,w) g([Q,P]^*;uvw^2).
\]

\item\label{prop6} 
We have the following:
\[
h^*(P, \nu;u,v) = \sum_{ \substack{ F \in \cS(\nu) \\ \sigma(F)= P } } h^*(F,\nu|_F;u,v) (uv - 1)^{\dim P - \dim F}.
\]

\item\label{prop7}
If $\nu$ is $\Q$-affine, then $\lc(P, \nu; u,v) =   v^{\dim P + 1}\lc(P, \nu; uv^{-1})$ and
\[
h^*(P,\nu;u,v,w)  = h^*(P,\nu;uw,vw).
\]

\item\label{prop8}
The  coefficients $h^*_{p,q,r}(P,\nu)_{[k]}$ are  non-negative integers.
For $0 \le j \le r$ and $[k] \in \Q/\Z$, the sequence
$\{ h^{i+ j,i,r}(P,\nu)_{[k]} \mid 0 \le i \le r -j \}$ is symmetric and unimodal.
Moreover, $h^*_{j,0,r}(P,\nu)_{[0]} \le h^*_{j-i,i,r}(P,\nu)_{[0]}$ for $0 \le i \le j$ and  $h^*_{r,j,r}(P,\nu)_{[0]} \le h^*_{r-i, j+i,r}(P,\nu)_{[0]}$
for $0 \le i \le r -j$.

\end{enumerate}

\end{theorem}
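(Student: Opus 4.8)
The plan is to deduce all nine assertions by purely formal manipulation of the three sums in Definition~\ref{d:new}, using as inputs only Stanley's inversion identity for Eulerian posets (Theorem~\ref{t:Eulerianinverse}) applied to the face poset $[\emptyset,P]$ and to the link posets, the reciprocity relations of Remark~\ref{r:symmetry} and Remark~\ref{r:non-negative}, and the three decomposition formulas of Remark~\ref{r:decompose}. Since the $\Z$-coefficient analogues of the whole package are \cite[Theorems~9.2--9.4]{Combinatorics}, and the grading of $\Z[\Q/\Z]$ by $[k]\in\Q/\Z$ is respected by every operation involved, the arguments of loc.\ cit.\ transpose to the present setting; I indicate the main steps.

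The hinge is the identity
\[
h^*(P,\nu;u,v) = \sum_{Q \subseteq P} \lc(Q,\nu|_Q;u,v)\, g([Q,P];uv).
\]
To get it, first invert the defining relation of $l_P(\cS(\nu),F;t)$ over the Eulerian interval $[\sigma(F),P]$ using Theorem~\ref{t:Eulerianinverse}, which yields $h(\lk_{\cS(\nu)}(F);t)=\sum_{\sigma(F)\subseteq Q\subseteq P}l_Q(\cS(\nu)|_Q,F;t)\,g([Q,P];t)$; substitute this into the definition of $h^*(P,\nu;u,v)$ and interchange the sums over $F$ and $Q$, noting that the cells $F$ with $\sigma(F)\subseteq Q$ are exactly the cells of $\cS(\nu)|_Q$, so that the inner sum is $\lc(Q,\nu|_Q;u,v)$. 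Setting $w=1$ in Definition~\ref{d:new} expresses $h^*(P,\nu;u,v,1)$ by the same right-hand side, so the hinge identity is part~\ref{prop1}; the second formula of part~\ref{prop111} is then the Möbius inverse of the defining formula of $h^*(P,\nu;u,v,w)$ over $[\emptyset,P]$, again by Theorem~\ref{t:Eulerianinverse}. Specializing $v=1$ directly in the two sums of Definition~\ref{d:new} that define $\lc(P,\nu;u,v)$ and $h^*(P,\nu;u,v)$, and invoking the last two decomposition formulas of Remark~\ref{r:decompose} with $\cS=\cS(\nu)$, gives $\lc(P,\nu;u,1)=\lc(P,\nu;u)$ and $h^*(P,\nu;u,1)=h^*(P,\nu;u)$, which are part~\ref{prop3'} and (together with part~\ref{prop1}) part~\ref{prop3}.

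For part~\ref{prop4}: since $g([Q,P];t)$ has $t$-degree strictly below $(\dim P-\dim Q)/2$, the $Q$-summand of Definition~\ref{d:new} has $w$-degree at most $\dim P$ unless $Q=P$, where it equals $w^{\dim P+1}\lc(P,\nu;u,v)$; this gives the degree bound and the top coefficient. The symmetries of $\lc(P,\nu;u,v)$ follow by substituting $\lc(F,\nu|_F;t)=t^{\dim F+1}\bar{\lc(F,\nu|_F;t^{-1})}$ (Remark~\ref{r:symmetry}) and $l_P(\cS(\nu),F;t)=t^{\dim P-\dim F}l_P(\cS(\nu),F;t^{-1})$ (Remark~\ref{r:non-negative}) into each summand of $\lc(P,\nu;u,v)$ and checking that the powers of $u$ and $v$ recombine so that the $F$-term is carried to the $F$-term of $\bar{\lc(P,\nu;v,u)}$, respectively of $(uv)^{\dim P+1}\bar{\lc(P,\nu;u^{-1},v^{-1})}$. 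Feeding these symmetries of each $\lc(Q,\nu|_Q;u,v)$ into the defining sum for $h^*(P,\nu;u,v,w)$, and using that $g([Q,P];uvw^2)$ has integer coefficients and depends on $u,v,w$ only through $uvw^2$ — which is fixed by $u\leftrightarrow v$ and by $(u,v,w)\mapsto(u^{-1},v^{-1},uvw)$ — yields part~\ref{prop'}. For part~\ref{prop7}, when $\nu$ is $\Q$-affine $\cS(\nu)$ is the trivial subdivision, so by Example~\ref{e:trivialdecomp} only $F=P$ survives in the definition of $\lc(P,\nu;u,v)$ and only the genuine faces $Q$ of $P$ (on each of which $\nu|_Q$ is again $\Q$-affine) appear in the hinge identity, and the two claimed formulas drop out. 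Part~\ref{prop6} is the mixed lift of the first decomposition in Remark~\ref{r:decompose}: substitute into the definition of $h^*(P,\nu;u,v)$ the link form of that identity, $h(\lk_{\cS(\nu)}(G);t)=\sum_{\sigma(F)=P,\,G\subseteq F}(t-1)^{\dim P-\dim F}h(\lk_{\cS(\nu)|_F}(G);t)$, and reorganize the resulting double sum.

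The remaining part is~\ref{prop8}. Non-negativity is bookkeeping: in a summand $v^{\dim F+1}\lc(F,\nu|_F;uv^{-1})\,l_Q(\cS(\nu)|_Q,F;uv)$ of $\lc(Q,\nu|_Q;u,v)$ the exponent of $v$ stays $\ge 1$, because $\lc(F,\nu|_F;t)$ has $t$-degree at most $\dim F$ (Example~\ref{e:weightedlocal}), and the integer coefficients of $\lc(F,\nu|_F;t)$, of $l_Q(\cS(\nu)|_Q,F;t)$, and of $g([Q,P];t)$ are all non-negative (Remark~\ref{r:non-negative}, Remark~\ref{r:non-negative2}); hence $h^*(P,\nu;u,v,w)$, and so each $h^*_{p,q,r}(P,\nu)_{[k]}$, is non-negative. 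For the symmetry and unimodality of $\{h^*_{i+j,i,r}(P,\nu)_{[k]}\mid 0\le i\le r-j\}$ and the lower bounds on the $[0]$-components, I would run the argument of \cite[Theorem~9.4]{Combinatorics} one weight $[k]$ at a time — which is legitimate precisely because the whole construction is graded by $[k]$ — taking the required symmetric unimodality of the local ingredients from Remark~\ref{r:non-negative} and the relevant lower bound from Remark~\ref{r:lower}. This transfer of the unimodality statements is the one step that needs more than formal rearrangement, and is the part I expect to be the main obstacle; failing a direct argument, it can be obtained a posteriori from the geometric identification in Corollary~\ref{c:explicit} together with Remark~\ref{r:unimodal}.
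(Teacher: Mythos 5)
Your proposal is correct and follows essentially the same route the paper indicates: the paper omits the proof, stating only that everything follows from the definitions and the properties established in Section~4, and citing \cite[Theorems~9.2--9.4]{Combinatorics} for the $\Z$-coefficient analogues. Your outline merely fills in the formal manipulations — the M\"obius inversion over $[\emptyset,P]$ and $[\sigma(F),P]$ via Theorem~\ref{t:Eulerianinverse}, the term-by-term use of the symmetry and unimodality inputs from Remarks~\ref{r:non-negative}, \ref{r:symmetry}, \ref{r:lower}, and the specializations via Remark~\ref{r:decompose} — together with the (correct) observation that all of this is compatible with the $\Q/\Z$-grading, so the cited $\Z$-arguments transfer verbatim; your fallback for part~\eqref{prop8} via Corollary~\ref{c:explicit} and Remark~\ref{r:unimodal} is also non-circular, since those results do not depend on part~\eqref{prop8}.
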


\begin{remark}
As in \cite[Section~9]{Combinatorics}, after fixing $r$, one may view the coefficients $h^*_{p,q,r}(P,\nu)$ in a diamond, for example,
by placing $h^*_{p,q,r}(P,\nu)$ at point $(q - p, p + q)$ in $\Z^2$.
The resulting  diamond of coefficients  is symmetric  with respect to its symmetry about the horizontal axis and symmetric up to conjugation with respect to its symmetry about the vertical axis.
Fixing $[k] \in \Q/\Z$, the coefficients of $[k]$ in each vertical strip of the diamond form a symmetric, unimodal sequence. The coefficients of $[0]$ in each horizontal strip of the diamond satisfy
the following lower bound theorem: the first entry  is a lower bound for the other entries.
\end{remark}

\begin{example}\label{e:simplex2}
As in Example~\ref{e:simplex}, suppose that $P$ is a simplex and $\nu$ is $\Q$-affine. 
For every face $Q$ of $P$, let $C_Q$ denote the cone over $Q \times \{1 \}$  in $M_\R \times \R$, and set $C = C_P$. Note that if $Q$ is empty, then $C_Q$ is the origin.
Let
$v_0, \ldots, v_n$ denote the primitive integer vectors on the rays of $C$ corresponding to the vertices of $P$.
Let $\bx = \{ v \in C \cap (M \times \Z) \mid v = \sum_{i = 0}^n a_i v_i, 0 \le a_i < 1 \}$.
If
$\pr: N_\R \times \R \rightarrow \R$ denotes projection onto the second coordinate,
then
\[
h^*(P,\nu;u,v,w) = \sum_{Q \subseteq P} \sum_{v \in \bx \cap \Int(C_Q)} w(v) u^{\pr(v)} v^{\dim Q + 1 - \pr(v)} w^{\dim Q + 1} \in \Z[\Q/\Z][u,v,w].
\]
\[
\lc(P,\nu;u,v) =  \sum_{v \in \bx \cap \Int(C)} w(v) u^{\pr(v)} v^{\dim P + 1 - \pr(v)}  \in \Z[\Q/\Z][u,v].
\]
\end{example}



We present explicit descriptions of some of the coefficients below.

\begin{example}\label{e:smallterms}
Let $P$ be a lattice polytope and let $\nu: P \rightarrow \R$ be the convex graph of an integral height function on $P$,
with corresponding regular lattice polyhedral subdivision $\cS(\nu)$ of $P$. 
Then for $q,r > 0$, 
\[
h^*_{0,q,r}(P,\nu) = h^*_{r - q,r,r}(P,\nu) = \bar{h^*_{q,0,r}(P,\nu)} =  \bar{h^*_{r,r - q,r}(P,\nu)}  =   \sum_{\substack{F \in \cS(\nu) \\\ \dim F = q + 1 \\\ \dim \sigma(F) = r + 1}} \sum_{v \in \Int(F) \cap M} w(v),
\]
\[
h^*_{0,0,r}(P,\nu) = h^*_{r,r,r}(P,\nu) =  \sum_{\substack{F \in \cS(\nu) \\\ \dim F \le 1 \\\ \dim \sigma(F) = r + 1}} \sum_{v \in \Int(F) \cap M} w(v),
\]
\[
\dim P + 1 + h^*_{0,0,0}(P,\nu) =  \sum_{\substack{Q \subseteq P \\\ \dim Q \le 1}} \sum_{v \in \Int(Q) \cap M} w(v).
\]
When $\dim P = 2$, this gives an explicit description of $h^*(P,\nu;u,v,w)$:
\[
h^*(P;\nu;u,v,w) = 1 + uvw^2  \big[ h^*_{0,0,0}(P,\nu) + w\big[(1 + uv)h^*_{0,0,1}(P,\nu) + vh^*_{0,1,1}(P,\nu) + u\bar{h^*_{0,1,1}(P,\nu)}\big]\big].
\]
When $\dim P = 3$, this gives an explicit description of all terms in $h^*(P,\nu;u,v,w)$ except $h^*_{1,1,2}$. The remaining term can be deduced from
the expression for $h^*(P,\nu;1,1,1) = h^*(P,\nu;1)$ in Example~\ref{e:volume}.
\end{example}

\begin{example}
In the case of Example~\ref{e:old}, one computes:
\[
h^*(P;\nu;u,v,w) = 1 + uvw^2  \big[ 3 + w\big[(1 + uv)(1 + 2[1/2]) + v[2/3] + u[1/3]\big]\big],
\]
\[
h^*(P;\nu;u,v) = 1 + uv  \big[ 4 + uv  + 2[1/2](1 + uv) + v[2/3] + u[1/3] \big],
\]
\[
\lc(P;\nu;u,v) =  uv  \big[ (1 + uv)(1 + 2[1/2]) + v[2/3] + u[1/3] \big].
\]
\end{example}



\section{Degenerations of  hypersurfaces}\label{s:hyper}

In this section, we use Theorem~\ref{t:comp} together with the combinatorics of Section~\ref{s:weighted} to prove a formula for the equivariant refined limit Hodge-Deligne polynomial of a sch\"on hypersurface
of a torus. We also give an equivalent formula for the equivariant refined limit Hodge-Deligne polynomial associated to the intersection cohomology of a canonical compactification of such a hypersurface.
The corresponding results for invariants with coefficients in $\Z$ appear in \cite{Geometry}. We will continue with the notation of Section~\ref{s:degenerations} and Section~\ref{s:weighted}.

Let $M$ be a lattice and let $N = \Hom(M,\Z)$.
Let $X^\circ =  \{ f = 0 \} \subseteq \Spec \K[M] \cong (\K^*)^n$ be a sch\"on hypersurface, where $f = \sum_{u \in M} \gamma_u(t) x^u$ for some $\gamma_u(t) \in \C(t)$.
The \define{Newton polytope} $P$ of $X^\circ$ is the convex hull of $S = \{ u \in M \mid \gamma_u(t) \ne 0 \}$ in $M_\R$.
 Note that  $P$ may be viewed as
 a full-dimensional lattice polytope in its affine span 
 with induced lattice structure, and $X^\circ \cong X' \times (\K^*)^k$,
 for some $k$, where $X'$ is a sch\"on hypersurface in a torus of dimension $\dim P$ with Newton polytope $P$.
 Hence we may and will assume that $\dim P = n$.

 Consider the induced integral height function  $\omega_f: S \rightarrow \Z$ defined by $\omega_f(u) = \ord_t \gamma_u(t)$.
Recall from Section~\ref{subdivisions} that we may consider the convex graph $\nu_f$ associated to $\omega_f$, with corresponding
regular lattice polyhedral subdivision $\cS(\nu_f)$.

\begin{remark}\label{r:existence}
Conversely, assume that $\nu: P \rightarrow \R$ is the convex graph of an integral height function on $P$. Since sch\"onness is a generic condition, it follows that there exists a
sch\"on hypersurface $X^\circ =  \{ f = 0 \} \subseteq \Spec \K[M]$ such that $\nu_f = \nu$ \cite[Remark~5.1]{Geometry}.
\end{remark}

Tropical geometry of hypersurfaces reduces to the study of Newton polytopes and polyhedral subdivisions \cite{GKZ,RSTFirst}.
In particular, $\Trop(X^\circ)$ admits a polyhedral structure $\Sigma$ with cells $\gamma$ in inclusion-reversing correspondence with positive-dimensional cells $F$ of $\cS(\nu_f)$, such that the bounded
cells of $\Sigma$ correspond to the cells of $\cS(\nu_f)$ not contained in the boundary of $P$. For example, if $F$ is a maximal dimensional cell in $\cS(\nu_f)$, then $\nu_f|_F$ is a $\Q$-affine function
and the corresponding $\Q$-linear function is an rational point  in $N_\Q$, also denoted $\nu_f|_F$. Then $\gamma = -\nu_f|_F$ is a vertex in $\Sigma$.

\begin{example}\label{e:simple3}
Following on with Example~\ref{e:simple} and Example~\ref{e:simple2}, let $X^\circ = \{ \sum_{i  = 1}^n 
x_i^{m_i} = t^e \} \subseteq (\K^*)^n$, for some  
$m_i \in \Z_{>0}$ and $e \in \Z$.
Let $f_1, \ldots, f_n$ denote
the standard basis vectors of $M_R \cong \R^n$. Then $P$ is the convex hull of the origin and $\{ m_i f_i \mid 1 \le i \le n \}$, and $\nu_f$ is the $\Q$-affine function corresponding to the $\Q$-linear function
$(-e/m_1, \ldots, -e/m_n) \in N_\Q$. As we have seen, $\Trop(X^\circ)$ has the structure of  a fan with
vertex $(e/m_1, \ldots, e/m_n) \in N_\Q$.
\end{example}


Write $f = \sum_{u \in M}  \lambda_{u}  t^{\omega_f(u)} g_u(t) x^{u}$ for some  $\lambda_{u} \in \C$ and
$g_u(t) \in \C(t)$ such that $g(1) = 1$.
Let $F$ be a face of $\cS(\nu_f)$ corresponding to a cell  $\gamma$ in $\cS(\nu_f)$. Then
\begin{equation}\label{e:init}
\init_\gamma X^\circ =  \{ \sum_{ \substack{ u \in F \cap M \\ \omega_f(u) = \nu_f(u)} } \lambda_u x^u = 0 \} \subseteq (\C^*)^n.
\end{equation}
Let $M_F$ denote the intersection of $M$ with the translation of
the affine span of $F$ to the origin. Then 
equation in the right hand side of \eqref{e:init} determines a sch\"on complex hypersurface $V_F^\circ \in \Spec \C[M_F]$ and
$\nu_f|_F$ determines a $\Q$-linear function on $(M_F)_\Q$. Multiplication by $\exp(-2\pi \sqrt{-1} \nu_f|_F)$ determines a good $\hat{\mu}$-action on
$V_F^\circ$. In this case, Theorem~\ref{t:comp} translates into the following corollary.

\begin{corollary} \label{c:hyper}
Let $X^\circ \subseteq (\K^*)^n$ be a sch\"{o}n hypersurface, with associated Newton polytope and convex graph of an integral height function $(P,\nu)$ and $\dim P = n$.
Then the  motivic nearby fiber of $X^\circ$ is given by
\[
\psi_{X^\circ} =  \sum_{\substack{F \in \cS(\nu) \\\ \sigma(F) = P}}  [V_{F}^\circ \circlearrowleft \hat{\mu}](1 - \L)^{\dim P - \dim F}.
\]
Here $\sigma(F)$ denotes the smallest face of $P$ containing $F$, and $V_{F}^\circ$ is a sch\"on complex hypersurface of a torus with  good $\hat{\mu}$-action
induced by multiplication by $\exp(-2\pi \sqrt{-1} \nu|_F)$.
\end{corollary}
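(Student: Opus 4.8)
\medskip

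The plan is to deduce the corollary directly from Theorem~\ref{t:comp} via the classical dictionary between the tropical geometry of a hypersurface and the regular subdivision of its Newton polytope recalled just above the statement. First I would fix the polyhedral structure $\Sigma$ on $\Trop(X^\circ)$ described there: its bounded cells $\gamma$ are in inclusion-reversing bijection with the positive-dimensional cells $F$ of $\cS(\nu)$ satisfying $\sigma(F) = P$, and under this bijection $\dim \gamma = \dim P - \dim F$, since a top-dimensional cell of $\cS(\nu)$ is dual to a vertex of $\Sigma$ and the two dimensions always sum to $\dim P$. Theorem~\ref{t:comp} then reads
\[
\psi_{X^\circ} = \sum_{\substack{\gamma\in\Sigma\\\gamma\text{ bounded}}} (-1)^{\dim\gamma}\,[\init_\gamma X^\circ \circlearrowleft \hat{\mu}],
\]
and the remaining work is to rewrite $[\init_\gamma X^\circ \circlearrowleft \hat{\mu}]$, for $\gamma$ dual to $F$, in terms of $V_F^\circ$.

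Next I would unwind \eqref{e:init}. The defining equation of $\init_\gamma X^\circ \subseteq (\C^*)^n$ is a Laurent polynomial all of whose monomials $x^u$ have exponents in $F\cap M$; after factoring out a single monomial (a unit in the Laurent ring, which does not change the zero set), all exponents lie in the linear span $M_F$ of $F$ translated to the origin. Picking a splitting $M \cong M_F \oplus M''$ then identifies $\init_\gamma X^\circ \cong V_F^\circ \times \Spec\C[M'']$, where $V_F^\circ \subseteq \Spec\C[M_F]$ is the schön hypersurface of \eqref{e:init} and $\Spec\C[M''] \cong (\C^*)^{\dim P - \dim F}$. The substantive point is tracking the $\hat{\mu}$-action. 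By the remark following the proof of Theorem~\ref{t:comp}, the action may be taken to be multiplication by $\exp(2\pi\sqrt{-1}w)$ for any $w$ in the affine span of $\gamma$; since $\gamma$ is the cell dual to $F$, that affine span is precisely $\{\, w \mid w|_{M_F} = -(\nu|_F)\,\}$, forcing $w|_{M_F}$ to equal minus the $\Q$-linear part of $\nu|_F$ while leaving $w|_{M''}$ arbitrary. Choosing $w$ with $w|_{M''} = 0$, the action on the factor $\Spec\C[M'']$ becomes trivial and the action on $V_F^\circ$ is multiplication by $\exp(-2\pi\sqrt{-1}\,\nu|_F)$, exactly as in the statement. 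Applying Example~\ref{e:torus} to the trivial action on the torus factor then gives $[\init_\gamma X^\circ \circlearrowleft \hat{\mu}] = [V_F^\circ \circlearrowleft \hat{\mu}]\,(\L-1)^{\dim P - \dim F}$.

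Substituting this into the formula above and absorbing $(-1)^{\dim P-\dim F}$ into $(\L-1)^{\dim P-\dim F}$ to produce $(1-\L)^{\dim P-\dim F}$ yields the asserted expression, after noting that the sum may be extended over \emph{all} cells $F$ with $\sigma(F) = P$: for a $0$-dimensional such cell $F = \{v\}$ the equation \eqref{e:init} reduces to a single nonzero monomial, so $V_F^\circ = \emptyset$ and the corresponding term vanishes. I expect the only real obstacle to be the equivariant identification in the middle step — checking that the freedom in $w$ within the affine span of $\gamma$, together with the choice of complement $M''$, really does allow one to trivialize the $\hat{\mu}$-action on the extra torus directions so that Example~\ref{e:torus} applies and the action on $V_F^\circ$ is the intended one. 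The combinatorial dictionary, the dimension bookkeeping, the sign, and the vanishing of the boundary-vertex terms are all routine, and the argument parallels the non-equivariant case treated in \cite{Geometry}.
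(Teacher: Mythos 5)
Your proof is correct and coincides with the argument the paper intends. The paper itself does not write out a proof of Corollary~\ref{c:hyper}: immediately before the statement it recalls the duality between cells of $\cS(\nu)$ and cells of the tropical hypersurface, records \eqref{e:init}, explains how $V_F^\circ$ and its $\hat\mu$-action arise, and then simply asserts that ``Theorem~\ref{t:comp} translates into the following corollary.'' You have made that translation explicit, and each of your steps is what the paper is implicitly invoking: the inclusion-reversing bijection between bounded cells $\gamma$ and positive-dimensional cells $F$ with $\sigma(F)=P$; the dimension count $\dim\gamma = \dim P - \dim F$; the splitting $M = M_F \oplus M''$ identifying $\init_\gamma X^\circ$ with $V_F^\circ \times (\C^*)^{\dim P - \dim F}$; the use of the remark following Theorem~\ref{t:comp} to take $w$ in the affine span of $\gamma$ with $w|_{M''}=0$, so the torus factor carries the trivial action and Example~\ref{e:torus} gives the factor $(\L-1)^{\dim P - \dim F}$; and the observation that vertices $F$ with $\sigma(F)=P$ contribute $V_F^\circ = \emptyset$, so the index set can be extended as in the statement.
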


\begin{remark}
In fact, the sum in Corollary~\ref{c:hyper}  runs only over the positive dimensional faces  in $\cS(\nu)$, since  when $F$ is a vertex of $\cS(\nu)$,  $V_{F}^\circ = \emptyset$.
Here we follow the convention that $[V] = 0 \in  K_0^{\hat{\mu} }(\Var_\C)$ if $V$ is empty.
\end{remark}

\begin{example}\label{e:affine}
Let $X^\circ \subseteq (\K^*)^n$ be a sch\"{o}n hypersurface, with associated Newton polytope and convex graph of an integral height function $(P,\nu)$ and $\dim P = n$.
Assume that $\nu$ is $\Q$-affine. Then the associated polyhedral subdivision  $\cS(\nu)$ of $P$ is trivial and Corollary~\ref{c:hyper} states that $\psi_{X^\circ} = [V^\circ_P \circlearrowleft \hat{\mu}]$.
\end{example}

\begin{example}\label{e:simpleaffine}
Let $V^\circ =  \{ \sum_{u \in P \cap M} \lambda_u x^u = 0 \} \subseteq (\C^*)^n$ be a sch\"on complex hypersurface with Newton polytope $P$ and  $\dim P = n$.  Let $\nu$ be the convex graph of an integral height function on $P$,
and assume that $\nu$ is $\Q$-affine   
and $\nu(u) \in \Z$ whenever $\lambda_u \ne 0$.
Then
$V^\circ$ is invariant under multiplication by the associated element $\exp(-2\pi \sqrt{-1} \nu) \in (\C^*)^n$, and we have an induced good $\hat{\mu}$-action on $V^\circ$. For example,
the hypersurfaces $V_{F}^\circ$ with corresponding good $\hat{\mu}$-action in the statement of Corollary~\ref{c:hyper} all appear in this way. 

In this case, the variety $X^\circ = \{ f = \sum_{u \in P \cap M} \lambda_u t^{\nu(u)} x^u = 0 \} \subseteq (\K^*)^n$ is sch\"on,   and may be viewed as a family $f: X^\circ \rightarrow \C^*$ with
isomorphic fibers. Explicitly, we may identify $f^{-1}(1)$ with $V^\circ$,  and
multiplication by $\exp(-2\pi \sqrt{-1} \lambda \nu) \in (\C^*)^n$, for any $\lambda \in \C$, gives an isomorphism between $f^{-1}(1)$ and $f^{-1}(\exp(2\pi \sqrt{-1} \lambda))$.
Considering  $0 \le \lambda \le 1$, we see that we may identify the good $\hat{\mu}$-action on $V^\circ$ above with the good $\hat{\mu}$-action on $f^{-1}(1)$ induced by monodromy.
Note that since the monodromy action is finite, the corresponding monodromy operator $T$ on cohomology is semi-simple. As in Example~\ref{e:affine},
Corollary~\ref{c:hyper} states that $\psi_{X^\circ} = [V^\circ \circlearrowleft \hat{\mu}]$.
\end{example}

Recall the definition of the weighted refined limit mixed $h^*$-polynomial $h^*(P,\nu;u,v,w) \in \Z[\Q/\Z][u,v,w]$ from Definition~\ref{d:new}.
Our goal is to prove the following formula for the equivariant refined limit Hodge-Deligne polynomial.

\begin{theorem}\label{t:mainhyper}
Let $X^\circ \subseteq (\K^*)^n$ be a sch\"{o}n hypersurface, with associated Newton polytope and convex graph of an integral height function $(P,\nu)$ and $\dim P = n$. Then the equivariant  refined limit Hodge-Deligne polynomial
$E(X_\infty^\circ, \hat{\mu} ;u,v,w) \in \Z[\Q/\Z][u,v,w]$ of $X^\circ$ is given by
\begin{equation*}
uvw^2E(X_\infty^\circ, \hat{\mu};u,v,w) = (uvw^2 - 1)^{\dim P} + (-1)^{\dim P + 1} h^*(P,\nu;u,v,w).
\end{equation*}
\end{theorem}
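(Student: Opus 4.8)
The plan is to feed the formula for the motivic nearby fiber $\psi_{X^\circ}$ from Corollary~\ref{c:hyper} into the equivariant Hodge-Deligne map $\bar E_{\hat\mu}$ and then recognize the output as the right-hand side via the combinatorial identities of Section~\ref{s:mixed}. The first step is to record what $\bar E_{\hat\mu}$ does to the building blocks: we need the equivariant refined limit Hodge-Deligne polynomial of a sch\"on complex hypersurface $V_F^\circ$ of a torus equipped with a finite-order $\hat\mu$-action of the type appearing in Corollary~\ref{c:hyper}, i.e. multiplication by $\exp(-2\pi\sqrt{-1}\,\nu|_F)$. Since $V_F^\circ$ is defined over $\C$ rather than $\K$, by Example~\ref{e:simpleaffine} the monodromy operator $T$ is semisimple, $N=0$, and the monodromy weight filtration agrees with the Deligne weight filtration; hence $\bar E_{\hat\mu}(V_F^\circ;u,v,w)$ is just the equivariant Hodge-Deligne polynomial $E_{\hat\mu}(V_F^\circ;uw,vw)$ with the weight grading recorded by $w$ (as in Example~\ref{e:trivial2}). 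So the real input needed here is the equivariant version of the Danilov--Khovanski\u\i / Borisov--Mavlyutov formula — i.e. $E_{\hat\mu}(V_F^\circ;u,v)$ expressed through the local weighted $h^*$-polynomials $\lc(Q,\nu|_Q;\cdot)$ of the faces of the Newton polytope of $V_F^\circ$. This is exactly the content advertised as Theorem~\ref{t:DKformula} in the introduction, so I would invoke it (or prove it first as a special case). Concretely, for such a hypersurface with Newton polytope $P'$ and $\Q$-affine height $\nu'$, one has
\[
uvw^2\,E(V_{\infty}^\circ,\hat\mu;u,v,w) = (uvw^2-1)^{\dim P'} + (-1)^{\dim P'+1} h^*(P',\nu';u,v,w),
\]
which is precisely the assertion of the theorem in the $\Q$-affine case; this serves both as the base case and as the shape of the formula we are proving.

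Next, apply $\bar E_{\hat\mu}$ to the identity
\[
\psi_{X^\circ} = \sum_{\substack{F\in\cS(\nu)\\ \sigma(F)=P}} [V_F^\circ\circlearrowleft\hat\mu]\,(1-\L)^{\dim P-\dim F}.
\]
Using $\bar E_{\hat\mu}(\L) = uvw^2$ (the class of $\A^1$ with trivial action contributes $H^2_c$ of weight $2$, eigenvalue $1$) and multiplicativity, the factor $(1-\L)^{\dim P-\dim F}$ becomes $(1-uvw^2)^{\dim P-\dim F}$, and $\bar E_{\hat\mu}(V_F^\circ)$ is substituted from the previous paragraph. After multiplying through by $uvw^2$ and using Theorem~\ref{t:combinatorics}\eqref{prop6} (the relation $h^*(P,\nu;u,v) = \sum_{\sigma(F)=P} h^*(F,\nu|_F;u,v)(uv-1)^{\dim P-\dim F}$, together with its refined $w$-version which is the combination of Definition~\ref{d:new} and Theorem~\ref{t:combinatorics}\eqref{prop111}), the sum over maximal-$\sigma$ cells $F$ telescopes into $h^*(P,\nu;u,v,w)$ plus the contribution of the trivial subdivision, which produces the $(uvw^2-1)^{\dim P}$ term. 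The sign $(-1)^{\dim P+1}$ is bookkept through the alternating signs in the definition of the local $h^*$-polynomial and the $(-1)^{\dim P-\dim F}$ coming from $(1-\L)^{\dim P-\dim F} = (-1)^{\dim P-\dim F}(\L-1)^{\dim P-\dim F}$.

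The main obstacle is the equivariant Danilov--Khovanski\u\i-type computation of $E_{\hat\mu}(V_F^\circ;u,v)$ and, more delicately, the identification of the weight grading: one must check that the Deligne weight filtration on $H^m_c(V_F^\circ)$ is tracked by exactly the power of $w$ predicted by the combinatorial formula, and that the $\hat\mu$-action splits off along the group-algebra grading $\Z[\Q/\Z]$ compatibly. This is where the "weighted" refinement of Ehrhart theory does its work: the weight $w(v)=[\nu(v)]\in\Q/\Z$ attached to a lattice point $v$ is precisely the eigenvalue with which $\exp(-2\pi\sqrt{-1}\nu|_F)$ acts on the corresponding cohomology class (via the standard toric description of $H^*_c$ of a hypersurface in terms of lattice points in dilated polytopes, cf. Example~\ref{e:simplex2}). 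I would handle this by reducing, via a resolution/compactification and the weight spectral sequence, to a statement about the cohomology of toric strata and their $\hat\mu$-actions, or alternatively by citing the non-equivariant result of \cite{Geometry} and upgrading it equivariantly using the explicit monodromy identification in Example~\ref{e:simple} / Example~\ref{e:simpleaffine}. The remaining steps — the telescoping over $\cS(\nu)$ and the sign/degree bookkeeping — are then formal consequences of the Eulerian-poset identities (Theorem~\ref{t:Eulerianinverse}) and the relations collected in Theorem~\ref{t:combinatorics}, and I expect them to be routine once the per-cell input is in hand.
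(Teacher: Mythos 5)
Your outline correctly identifies the two main ingredients the paper uses — Corollary~\ref{c:hyper} and the equivariant Danilov--Khovanski{\u\i} formula (Theorem~\ref{t:DKformula}) — and the final telescoping via Theorem~\ref{t:combinatorics}\eqref{prop6}. But there is a real gap in how you propose to get the $w$-variable into the picture, and this is precisely the point the paper has to work around.

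The step ``apply $\bar E_{\hat\mu}$ to the identity $\psi_{X^\circ} = \sum_{\sigma(F)=P} [V_F^\circ\circlearrowleft\hat\mu](1-\L)^{\dim P-\dim F}$'' is not well-posed. That identity lives in $K_0^{\hat\mu}(\Var_\C)$, and the only specialization available there is the two-variable map $E_{\hat\mu}:K_0^{\hat\mu}(\Var_\C)\to\Z[\Q/\Z][u,v]$. The refined map $\bar E_{\hat\mu}$ is defined on $K_0(\Var_\K)$, and — as the commutative diagram in Section~\ref{s:invariants} is careful to record — it does \emph{not} factor through $\psi$: the motivic nearby fiber determines only the $w\mapsto1$ specialization of $\bar E_{\hat\mu}$. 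Concretely, $\psi_{X^\circ}$ knows $\Gr_F^\bullet\Gr_\bullet^M H^\bullet_c(X^\circ_\infty)$ but not the extra Deligne-weight grading $\Gr_\bullet^W$. So you cannot read off the $w$-grading by ``substituting $\bar E_{\hat\mu}(\L)=uvw^2$'' into that identity; and correspondingly the ``refined $w$-version'' of Theorem~\ref{t:combinatorics}\eqref{prop6} that you want to telescope with is neither stated in the paper nor a formal consequence of Definition~\ref{d:new} and Theorem~\ref{t:combinatorics}\eqref{prop111} (those are poset-inversion relations over faces of $P$, not a subdivision-telescoping over cells of $\cS(\nu)$).

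The missing idea is the reduction to the $w=1$ case. In the paper, Lemma~\ref{l:equiv} shows Theorem~\ref{t:mainhyper} is equivalent to Theorem~\ref{t:mainintersection} via the sum-over-strata formula (Theorem~\ref{t:sumoverstrata}), and the primitive intersection cohomology $IH^{\dim P - 1}_{\prim}(X_\infty)$ is \emph{pure} of $W$-weight $\dim P - 1$, so $E_{\inter,\prim}(X_\infty,\hat\mu;u,v,w)=w^{\dim P-1}E_{\inter,\prim}(X_\infty,\hat\mu;u,v,1)$. Together with the weak Lefschetz identities, this makes Theorem~\ref{t:mainhyper} equivalent to its $w=1$ specialization \eqref{e:mainhyper}. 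Only after this reduction does one apply $E_{\hat\mu}$ to Corollary~\ref{c:hyper}, substitute the two-variable Theorem~\ref{t:DKformula}, and telescope via \eqref{prop6} — all in $(u,v)$ only. Without this purity argument you have no way to reconstruct the $w$-grading. (One further small point: the paper proves Theorem~\ref{t:DKformula} not by redoing Danilov--Khovanski{\u\i} equivariantly or upgrading \cite{Geometry}, but by citing the Matsui--Takeuchi result \cite[Theorem~2.7]{MTMonodromy} for the $v=1$ specialization and then bootstrapping to $(u,v)$ using the $\Q$-affine case identity $E(X^\circ_\infty,\hat\mu;u,v,w)=E_{\hat\mu}(V^\circ;uw,vw)$ and the substitution $u\mapsto uw^{-1}$, $v\mapsto1$; your two alternative suggestions would require substantially more work to carry out.)
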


We will also prove an equivalent statement involving intersection cohomology. We will use middle-perversity throughout.   As explained in detail in \cite[Section~6]{Geometry}, given a projective variety $X$ over $\K$, for each of the geometric invariants described in
 Section~\ref{s:invariants}, one may consider the corresponding invariant with respect to intersection cohomology rather than cohomology 
 e.g. the equivariant  refined limit Hodge-Deligne polynomial $E_{\inter}(X_\infty, \hat{\mu};u,v,w)$ associated to the  intersection cohomology of $X$.  The key benefit of using intersection cohomology is the fact that the intersection cohomology groups of
 a complex projective variety admit a pure (rather than just mixed) Hodge structure.

We will need the  following sum-over-strata formula. Let $X^\circ \subseteq (\K^*)^n$ be a sch\"{o}n hypersurface, with associated Newton polytope and convex graph of an integral height function $(P,\nu)$ and $\dim P = n$.
 Let $\Delta_P$ denote the normal fan to $P$ with cones $\sigma$ in inclusion-reserving bijection with the faces $Q_{\sigma}$ of $P$. Let $X$ denote the closure of $X^\circ$ in the projective toric variety
 $\P(\Delta_P)_\K$ over $\K$ corresponding to $\Delta_P$. Then $X$ admits a stratification
 $X = \bigcup_{\sigma\in\Delta_P} X_\sigma^\circ$, where $X_\sigma^\circ$ is a sch\"on hypersurface with Newton polytope $Q_{\sigma}$ and  convex graph of an integral height function $\nu|_{Q_\sigma}$.
 The theorem below is proved in the non-equivariant setting in \cite[Theorem~6.1]{Geometry}, but the proof goes through unchanged in our situation.


\begin{theorem}\cite[Theorem~6.1]{Geometry} \label{t:sumoverstrata}
Let $X^\circ \subseteq (\K^*)^n$ be a sch\"{o}n hypersurface, with associated Newton polytope and convex graph of an integral height function $(P,\nu)$  and $\dim P = n$.
Let $X$ denote the closure of $X^\circ$ in the projective toric variety over $\K$ corresponding to the normal fan of $P$. Then
the equivariant refined limit Hodge-Deligne polynomial obeys
\[E_{\inter}(X_\infty, \hat{\mu};u,v,w)=\sum_{\sigma\in\Delta_P} E((X_\sigma^\circ)_{\infty}, \hat{\mu};u,v,w)g([0,\sigma];uvw^2).
\]
\end{theorem}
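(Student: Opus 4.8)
The plan is to run the argument of \cite[Theorem~6.1]{Geometry} in the $\hat{\mu}$-equivariant, monodromy-refined setting; the guiding point is that all of the extra structure (the good $\hat{\mu}$-action, the nilpotent $N$, and the weight grading recorded by $w$) is carried along the base directions of the degeneration, while the transverse toric geometry that produces the $g$-polynomials does not vary with the parameter $t$. Concretely, I would work in the Grothendieck group of polarizable mixed Hodge modules equipped with a good $\hat{\mu}$-action and a compatible nilpotent operator, in which $IC_X$, its restrictions to strata, and its pushforwards are defined, and in which $E_{\inter}(X_\infty,\hat{\mu};u,v,w)$ is the realization of the suitably normalized class of $IC_X$.

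First I would record the local geometry of $X = \overline{X^\circ} \subseteq \P(\Delta_P)_\K$. Since $X^\circ$ is schön and $P$ spans $M_\R$, the closure $X$ meets the torus orbits transversally, so along each stratum $X_\sigma^\circ$ (with $\sigma \in \Delta_P$ the cone dual to the face $Q_\sigma$ of $P$) there is, analytically locally, a product description $X \cong X_\sigma^\circ \times U_\sigma$ in which $U_\sigma$ is the affine complex toric variety transverse to the orbit $O_\sigma$ (fan $\Star(\sigma)$). Moreover this description is compatible with the family: near $X_\sigma^\circ$ it intertwines $f\colon X \to \D$ with the product of the family $X_\sigma^\circ \to \D$ --- which, being the closure of $X^\circ$ intersected with a torus orbit, is itself a schön hypersurface with data $(Q_\sigma, \nu|_{Q_\sigma})$ --- and the constant family $U_\sigma$. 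Consequently the cohomology sheaves of $IC_X|_{X_\sigma^\circ}$ are locally constant, their stalks have dimension $g_i([0,\sigma])$ in even degree $2i$, vanish in odd degree, are of Hodge-Tate type $(i,i)$, and carry trivial monodromy and trivial $\hat{\mu}$-action: the first three facts come from Stanley's computation of the local intersection cohomology of a toric variety \cite{StaSubdivisions}, and the last from the $t$-independence of $U_\sigma$.

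Next I would run the sum-over-strata computation. The perverse (Leray) filtration attached to the stratification $X = \bigsqcup_\sigma X_\sigma^\circ$, together with the local product description, expresses $[IC_X]$ as an alternating sum over $\sigma$ of the class of $IC$ of the base times the local transverse contribution; passing to refined equivariant Hodge-Deligne polynomials and collecting each transverse Tate contribution into a $g$-polynomial (the substitution $t \mapsto uvw^2$ absorbing the degree, Hodge-type and weight shifts) gives
\[
E_{\inter}(X_\infty,\hat{\mu};u,v,w) = \sum_{\sigma \in \Delta_P} E\bigl((X_\sigma^\circ)_\infty,\hat{\mu};u,v,w\bigr)\, g([0,\sigma];uvw^2),
\]
which is the assertion. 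Alternatively one can sidestep the perverse formalism: choose a projective smooth refinement $\tilde{\Delta} \to \Delta_P$, let $\tilde{X}$ be the smooth schön closure of $X^\circ$ in $\P(\tilde{\Delta})_\K$, compute $E(\tilde{X}_\infty,\hat{\mu};u,v,w)$ directly from Theorem~\ref{t:comp}, and extract $E_{\inter}(X_\infty,\hat{\mu};u,v,w)$ by applying the $\hat{\mu}$-equivariant decomposition theorem to $\tilde{X} \to X$; all of the numerical bookkeeping is then governed by toric $g$- and $h$-polynomials and collapses to the stated sum via the inversion identities of Theorem~\ref{t:Eulerianinverse}.

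The main obstacle is precisely the place where the equivariant statement goes beyond \cite[Theorem~6.1]{Geometry}: one must check that the transverse contributions $g([0,\sigma];\,\cdot\,)$ stay of Hodge-Tate type with trivial monodromy and trivial $\hat{\mu}$-action, so that they factor out of the sum with coefficients in $\Z$ rather than being twisted by elements of $\Z[\Q/\Z]$ or by extra powers of $w$. This reduces to realizing the product description $X \cong X_\sigma^\circ \times U_\sigma$ $\hat{\mu}$-equivariantly and compatibly with the family, and then verifying multiplicativity of the refined equivariant Hodge-Deligne polynomial under the product of a degeneration with a constant toric factor. This is light work --- the monodromy and $\hat{\mu}$-action act through the base factor alone --- but it is where the genuine checking lies; granting it, the equivariant formula follows from the non-equivariant one with no further change.
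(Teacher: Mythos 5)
Your argument is correct and matches the paper's approach. The paper's own proof is simply a citation to \cite[Theorem~6.1]{Geometry} together with the assertion that ``the proof goes through unchanged'' in the equivariant refined setting; your reconstruction --- local transverse product along each torus orbit, $g$-polynomials arising as Hodge--Tate stalk contributions with the substitution $t \mapsto uvw^2$, and the observation that the transverse toric slice $U_\sigma$ is independent of the parameter so monodromy and the $\hat\mu$-action act only through the base factor --- is exactly the content that assertion is invoking, and you correctly isolate the one genuine check (untwisted, integer-coefficient transverse contributions with no extra $w$-powers) that makes the non-equivariant argument carry over verbatim.
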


Our goal is to prove the following theorem.

 \begin{theorem}\label{t:mainintersection}
Let $X^\circ \subseteq (\K^*)^n$ be a sch\"{o}n hypersurface, with associated Newton polytope and convex graph of an integral height function $(P,\nu)$  and $\dim P = n$.
Let $X$ denote the closure of $X^\circ$ in the projective toric variety over $\K$ corresponding to the normal fan of $P$.
Then the equivariant  refined limit Hodge-Deligne polynomial $E_{\inter}(X_\infty, \hat{\mu};u,v,w)  \in \Z[\Q/\Z][u,v,w]$ associated to the intersection cohomology of $X$ is given by
\[
uvw^2E_{\inter}(X_\infty, \hat{\mu};u,v,w) = uvw^2E_{\inter,\Lef}(P;uvw^2) + (-1)^{\dim P + 1}w^{\dim P + 1} \lc(P,\nu;u,v),
\]
where
\begin{equation}\label{e:interlef}
(t - 1)E_{\inter,\Lef}(P;t) = t^{\dim P}g([\emptyset,P]^*;t^{-1})- g([\emptyset,P]^*;t)
\end{equation}
is defined in terms of Stanley's g-polynomial (see Definition~\ref{d:g}).
\end{theorem}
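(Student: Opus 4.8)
The plan is to deduce this from the sum-over-strata formula (Theorem~\ref{t:sumoverstrata}), the hypersurface formula (Theorem~\ref{t:mainhyper}), and the combinatorial inversion relations of Theorem~\ref{t:combinatorics}, reorganizing the resulting double sum with the $g$-polynomial recursion of Definition~\ref{d:g}. First I would reindex Theorem~\ref{t:sumoverstrata} by the faces of $P$: the cones $\sigma\in\Delta_P$ correspond inclusion-reversingly to the nonempty faces $Q$ of $P$, the face poset $[0,\sigma]$ of the cone associated to $Q$ is isomorphic to $[Q,P]^*$, and $X_\sigma^\circ$ is a sch\"on hypersurface whose Newton polytope $Q$ is full-dimensional in its ambient torus, with height data $\nu|_Q$. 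Writing $t:=uvw^2$ for brevity and applying Theorem~\ref{t:mainhyper} to each stratum, the identity $uvw^2E((X_Q^\circ)_\infty,\hat\mu;u,v,w)=(t-1)^{\dim Q}+(-1)^{\dim Q+1}h^*(Q,\nu|_Q;u,v,w)$ lets me split $uvw^2E_{\inter}(X_\infty,\hat\mu;u,v,w)$ as $S_1+S_2$, where
\[
S_1=\sum_{\emptyset\ne Q\subseteq P}(t-1)^{\dim Q}g([Q,P]^*;t),\qquad S_2=\sum_{\emptyset\ne Q\subseteq P}(-1)^{\dim Q+1}h^*(Q,\nu|_Q;u,v,w)\,g([Q,P]^*;t).
\]

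Next I would evaluate $S_1$. The face poset $[\emptyset,P]$ is Eulerian of rank $\dim P+1$ (Example~\ref{e:polytope}), hence so is the dual $B:=[\emptyset,P]^*$; in $B$ the minimal element is $P$, the element $Q$ has rank $\dim P-\dim Q$, and the interval below $Q$ is $[Q,P]^*$. Feeding this into the defining recursion in Definition~\ref{d:g} for $g(B;t)$ and peeling off the $Q=\emptyset$ term gives
\[
(t-1)S_1=t^{\dim P+1}g([\emptyset,P]^*;t^{-1})-g([\emptyset,P]^*;t).
\]
Comparing with the relation \eqref{e:interlef} defining $E_{\inter,\Lef}$, multiplied through by $t$, I obtain $S_1=t\,E_{\inter,\Lef}(P;t)+g([\emptyset,P]^*;t)$.

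For $S_2$ I would adjoin the $Q=\emptyset$ term, which equals $g([\emptyset,P]^*;t)$ because $h^*(\emptyset,\,\cdot\,;u,v,w)=1$. Using $(-1)^{\dim Q+1}=(-1)^{\dim P+1}(-1)^{\dim P-\dim Q}$, the completed sum over all faces $Q\subseteq P$ is, up to the sign $(-1)^{\dim P+1}$, the right-hand side of the inversion formula Theorem~\ref{t:combinatorics}(\ref{prop111}), namely $w^{\dim P+1}\lc(P,\nu;u,v)$. Hence $S_2=(-1)^{\dim P+1}w^{\dim P+1}\lc(P,\nu;u,v)-g([\emptyset,P]^*;t)$. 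Adding $S_1+S_2$, the two copies of $g([\emptyset,P]^*;t)$ cancel, and substituting back $t=uvw^2$ yields the claimed identity.

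The bulk of this is bookkeeping, so the real care goes into two places. One is the identification of $g([0,\sigma];t)$ with $g([Q_\sigma,P]^*;t)$ together with the correct index set in Theorem~\ref{t:sumoverstrata} (cones of the normal fan, equivalently nonempty faces of $P$ — the empty face does not occur). The other, which I expect to be the genuine sleight of hand, is matching the normalization of $E_{\inter,\Lef}$: the $g$-recursion in the $S_1$ step naturally produces $t^{\dim P+1}g(\cdots;t^{-1})$ and a leftover term $g([\emptyset,P]^*;t)$, and one must see that this leftover is exactly what is needed to promote the $S_2$ sum from faces $Q\ne\emptyset$ to all faces so that Theorem~\ref{t:combinatorics}(\ref{prop111}) applies verbatim; a single slip in the power of $t$ or in a sign would break this cancellation.
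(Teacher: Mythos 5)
Your argument is correct, and it is an explicit write-out of the implication Theorem~\ref{t:mainhyper} $\Rightarrow$ Theorem~\ref{t:mainintersection} --- one direction of the equivalence that the paper asserts in Lemma~\ref{l:equiv} and delegates to \cite[Lemma~6.2]{Geometry} without reproducing the details. The bookkeeping checks out: the cone-face poset $[0,\sigma]$ is indeed $[Q_\sigma,P]^*$; applying the defining $g$-recursion to $B=[\emptyset,P]^*$ (Eulerian of rank $\dim P+1$, with $\rho_B(Q)=\dim P-\dim Q$ and $[\hat 0,Q]_B=[Q,P]^*$) and peeling off the $Q=\emptyset$ term gives $(t-1)S_1 = t^{\dim P+1}g([\emptyset,P]^*;t^{-1}) - g([\emptyset,P]^*;t)$, hence $S_1 = tE_{\inter,\Lef}(P;t)+g([\emptyset,P]^*;t)$; adjoining $Q=\emptyset$ to $S_2$ lets Theorem~\ref{t:combinatorics}(\ref{prop111}) apply verbatim, and the two stray copies of $g([\emptyset,P]^*;t)$ cancel exactly. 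The paper organizes the proof differently in that it proves both theorems simultaneously: it states the equivalence up front, reduces both (via purity of intersection cohomology and the weak Lefschetz argument) to the $w=1$ identity \eqref{e:mainhyper}, and then establishes \eqref{e:mainhyper} via Corollary~\ref{c:hyper} and Theorem~\ref{t:DKformula}. Your derivation is precisely the formal half of that equivalence that the paper cites; since Theorem~\ref{t:mainhyper} is available as a prior result, this is complete for the statement at hand.
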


The following lemma is a formal consequence of Theorem~\ref{t:sumoverstrata}. Details are provided in \cite[Lemma~6.2]{Geometry}.

\begin{lemma}\label{l:equiv}
Theorem~\ref{t:mainhyper} and Theorem~\ref{t:mainintersection} are equivalent.
\end{lemma}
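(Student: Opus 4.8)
The proof rests on a purely combinatorial identity which is transported through the sum-over-strata formula of Theorem~\ref{t:sumoverstrata}. For a face $Q$ of $P$ set $\nu_Q := \nu|_Q$ and
\[
A_Q := (uvw^2-1)^{\dim Q} + (-1)^{\dim Q+1}h^*(Q,\nu_Q;u,v,w), \qquad B_Q := uvw^2\,E_{\inter,\Lef}(Q;uvw^2) + (-1)^{\dim Q+1}w^{\dim Q+1}\lc(Q,\nu_Q;u,v).
\]
With this notation, Theorem~\ref{t:mainhyper} applied to the sch\"on hypersurface $X_Q^\circ$ asserts $uvw^2\,E((X_Q^\circ)_\infty,\hat{\mu};u,v,w) = A_Q$, and Theorem~\ref{t:mainintersection} applied to it asserts $uvw^2\,E_{\inter}((\overline{X_Q^\circ})_\infty,\hat{\mu};u,v,w) = B_Q$. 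The identity I would establish is purely a statement about the combinatorics of $(P,\nu)$, namely
\[
B_P = \sum_{\emptyset \ne Q \subseteq P} A_Q\, g([Q,P]^*;uvw^2).
\]

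To prove this, write $t = uvw^2$ and split the right-hand side according to the two summands of $A_Q$. For the $(t-1)^{\dim Q}$-part I would apply the defining recursion of the $g$-polynomial (Definition~\ref{d:g}) to the Eulerian poset $[\emptyset,P]^*$, which has rank $\dim P + 1$ and in which the interval below $Q$ is $[Q,P]^*$; this yields $t^{\dim P+1}g([\emptyset,P]^*;t^{-1}) = \sum_{Q\subseteq P}(t-1)^{\dim Q+1}g([Q,P]^*;t)$, whose $Q = \emptyset$ term is exactly $g([\emptyset,P]^*;t)$, so that after dividing by $t-1$ and comparing with \eqref{e:interlef} one obtains $\sum_{\emptyset\ne Q\subseteq P}(t-1)^{\dim Q}g([Q,P]^*;t) = t\,E_{\inter,\Lef}(P;t) + g([\emptyset,P]^*;t)$. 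For the $h^*$-part I would invoke the second identity of part~\eqref{prop111} of Theorem~\ref{t:combinatorics}, isolate its $Q = \emptyset$ contribution $(-1)^{\dim P+1}g([\emptyset,P]^*;t)$, and use $(-1)^{\dim Q+1} = (-1)^{\dim P+1}(-1)^{\dim P-\dim Q}$ to get $\sum_{\emptyset\ne Q\subseteq P}(-1)^{\dim Q+1}h^*(Q,\nu_Q;u,v,w)g([Q,P]^*;t) = (-1)^{\dim P+1}w^{\dim P+1}\lc(P,\nu;u,v) - g([\emptyset,P]^*;t)$. Adding the two contributions, the two copies of $g([\emptyset,P]^*;t)$ cancel and leave exactly $B_P$.

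Granting the identity (and the same identity for every face of $P$ in place of $P$, which has the identical proof), the equivalence is formal. Reindexing the sum in Theorem~\ref{t:sumoverstrata} over the nonempty faces $Q$ of $P$ — using the anti-isomorphism between $\Delta_P$ and the face poset of $P$, under which $[0,\sigma]\cong[Q_\sigma,P]^*$ — gives, for any sch\"on hypersurface $X^\circ$ with Newton polytope $P$,
\[
uvw^2\,E_{\inter}(X_\infty,\hat{\mu};u,v,w) = \sum_{\emptyset\ne Q\subseteq P} uvw^2\,E((X_Q^\circ)_\infty,\hat{\mu};u,v,w)\, g([Q,P]^*;uvw^2).
\]
If Theorem~\ref{t:mainhyper} holds for all sch\"on hypersurfaces, the right-hand side equals $\sum_{\emptyset\ne Q\subseteq P}A_Q\,g([Q,P]^*;uvw^2) = B_P$, which is Theorem~\ref{t:mainintersection} for $X^\circ$. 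Conversely, assume Theorem~\ref{t:mainintersection} and argue by induction on $\dim P$, the case $\dim P = 0$ being trivial as both sides vanish. The $Q = P$ summand has coefficient $g([P,P]^*;t) = 1$, and for each proper face $Q\subsetneq P$ the inductive hypothesis gives $uvw^2\,E((X_Q^\circ)_\infty,\hat{\mu};u,v,w) = A_Q$; combining the displayed formula, the equality $uvw^2\,E_{\inter}(X_\infty,\hat{\mu};u,v,w) = B_P$, and the combinatorial identity $B_P = A_P + \sum_{\emptyset\ne Q\subsetneq P}A_Q\,g([Q,P]^*;t)$ then forces $uvw^2\,E((X^\circ)_\infty,\hat{\mu};u,v,w) = A_P$, i.e.\ Theorem~\ref{t:mainhyper} for $X^\circ$.

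The one genuinely delicate step is the bookkeeping in the combinatorial identity: one must keep careful track of the $Q = \emptyset$ contributions coming from the $g$-polynomial recursion and from part~\eqref{prop111} of Theorem~\ref{t:combinatorics}, and check that the signs are arranged so that the two spurious copies of $g([\emptyset,P]^*;t)$ cancel exactly. Everything else reduces to the already-proved sum-over-strata formula, the recursion of Definition~\ref{d:g}, and the formal structure of the face poset.
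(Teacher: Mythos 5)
Your proof is correct and is precisely the kind of formal argument the paper is alluding to when it writes that the lemma ``is a formal consequence of Theorem~\ref{t:sumoverstrata}'' and defers the details to \cite[Lemma~6.2]{Geometry}. The key combinatorial identity $B_P = \sum_{\emptyset\ne Q\subseteq P}A_Q\,g([Q,P]^*;t)$ checks out: the $g$-polynomial recursion of Definition~\ref{d:g} applied to $[\emptyset,P]^*$ gives $(t-1)\sum_{\emptyset\ne Q}(t-1)^{\dim Q}g([Q,P]^*;t)=t^{\dim P+1}g([\emptyset,P]^*;t^{-1})-g([\emptyset,P]^*;t)$, which after comparison with \eqref{e:interlef} yields $\sum_{\emptyset\ne Q}(t-1)^{\dim Q}g([Q,P]^*;t)=tE_{\inter,\Lef}(P;t)+g([\emptyset,P]^*;t)$; and isolating the $Q=\emptyset$ term in the second formula of part~\eqref{prop111} of Theorem~\ref{t:combinatorics} and multiplying through by $(-1)^{\dim P+1}$ gives $\sum_{\emptyset\ne Q}(-1)^{\dim Q+1}h^*(Q,\nu_Q;u,v,w)g([Q,P]^*;t)=(-1)^{\dim P+1}w^{\dim P+1}\lc(P,\nu;u,v)-g([\emptyset,P]^*;t)$. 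The two copies of $g([\emptyset,P]^*;t)$ cancel as you say. The reindexing of Theorem~\ref{t:sumoverstrata} via the order-reversing bijection between cones of the normal fan and nonempty faces of $P$ (under which $[0,\sigma]\cong[Q_\sigma,P]^*$) is correct, and the forward implication and the backward implication by induction on $\dim P$ are both sound. This is the same route the paper intends.
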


It is well known that all interesting cohomology above occurs in middle dimension. We review the relevant facts below and refer the reader to \cite[Sections~5.3,6]{Geometry} for details.
We have weak Lefschetz theorems stating that the Gysin maps
$H^{m}_c(X^\circ_{\gen})\rightarrow H^{m+2}_c((\C^*)^n)$ and $IH^{m}(X_{\gen})\rightarrow IH^{m+2}(\P(\Delta_P)_\C)$ are surjective for $m\geq n - 1$ and an isomorphism if $m > n - 1$.
One may deduce from Theorem~\ref{t:sumoverstrata} that the monodromy maps associated to the cohomology with compacts supports and intersection cohomology of $(\K^*)^n$ and
$\P(\Delta_P)_\K$ respectively are trivial. Since $X^\circ_{\gen}$ is affine, $H^{m}_c(X^\circ_{\gen}) = 0$ for $m < n - 1$. Also, Poincar\'{e} duality for intersection cohomology means that
the Gysin isomorphism above determines $IH^{m}(X_{\gen})$ for $m < n - 1$.
We let $H^{n - 1}_{\prim,c}(X^\circ_{\infty})$ and $IH^{n - 1}_{\prim}(X_{\infty})$ denote the kernels of the above Gysin maps with induced filtrations.
These groups constitute the `interesting' cohomology. Consider the contributions of these groups (up to a sign) to the equivariant refined limit  Hodge-Deligne polynomial:
\[
E_{\prim}(X_\infty^\circ, \hat{\mu};u,v,w) := 
\sum_{p,q,r} \sum_{\alpha \in \Q/\Z} h^{p,q,r}(H^{n - 1}_{\prim,c}(X^\circ_{\infty}))_\alpha \alpha u^p v^q w^r,
\]
\[
E_{\inter, \prim}(X_\infty, \hat{\mu};u,v,w) := 
\sum_{p,q,r} \sum_{\alpha \in \Q/\Z} h^{p,q,r}(IH^{n - 1}_{\prim}(X_{\infty}))_\alpha \alpha u^p v^q w^r.
\]
Since primitive intersection cohomology is concentrated in $W$-degree equal to $\dim P-1$,
\begin{equation*}\label{e:pure}
E_{\inter, \prim}(X_\infty, \hat{\mu};u,v,w)  = w^{\dim P - 1} E_{\inter, \prim}(X_\infty, \hat{\mu};u,v,1).
\end{equation*}
One may then verify (see \cite[p.35]{Geometry}) that
\[
uvw^2E(X_\infty^\circ, \hat{\mu};u,v,w) = (uvw^2 - 1)^{\dim P} + (-1)^{\dim P + 1}(1  + uvw^2 E_{\prim}(X_\infty^\circ, \hat{\mu};u,v,w)), \]
\[
E_{\inter}(X_\infty, \hat{\mu};u,v,w)  = E_{\inter,\Lef}(P;uvw^2) + (-1)^{\dim P + 1}E_{\inter, \prim}(X_\infty, \hat{\mu};u,v,w),
\]
where $E_{\inter,\Lef}(P;t) $ is defined in \eqref{e:interlef}. We conclude that Theorem~\ref{t:mainintersection} is equivalent to the following:
\begin{equation}\label{e:rewrite}
uvE_{\inter, \prim}(X_\infty, \hat{\mu};u,v,1) = \lc(P,\nu;u,v).
\end{equation}
By Lemma~\ref{l:equiv}, it follows that 
Theorem~\ref{t:mainhyper} is equivalent to its specialization at $w = 1$:
\begin{equation}\label{e:mainhyper}
uvE(X_\infty^\circ, \hat{\mu};u,v) = (uv - 1)^{\dim P} + (-1)^{\dim P + 1} h^*(P,\nu;u,v).
\end{equation}

Before proceeding to the proof of Theorem~\ref{t:mainhyper} and Theorem~\ref{t:mainintersection}, we will need the following theorem. In fact, it is a consequence of the proof that the theorem below is a
special case of Theorem~\ref{t:mainhyper} and Theorem~\ref{t:mainintersection}.
An algorithm to compute the equivariant Hodge-Deligne polynomial below was given in
\cite[Section~2]{MTMonodromy}, extending an algorithm of Danilov and Khovanski{\u\i} in the non-equivariant setting \cite{DKAlgorithm}. 
The theorem below generalizes a formula of Borisov and Mavlyutov in the non-equivariant setting \cite{BMString}.

\begin{theorem}\label{t:DKformula}
Let $V^\circ =  \{ \sum_{u \in P \cap M} \lambda_u x^u = 0 \} \subseteq (\C^*)^n$ be a sch\"on complex hypersurface with Newton polytope $P$ and  $\dim P = n$.  Let $\nu$ be the convex graph of an integral height function on $P$,
and assume that $\nu$ is $\Q$-affine   
and $\nu(u) \in \Z$ whenever $\lambda_u \ne 0$.
Then
$V^\circ$ is invariant under multiplication by the associated element $\exp(-2\pi \sqrt{-1} \nu) \in (\C^*)^n$, and we have an induced good $\hat{\mu}$-action on $V^\circ$.
The corresponding equivariant Hodge-Deligne polynomial $E_{\hat{\mu}}(V^\circ;u,v) \in \Z[\Q/\Z][u,v]$ is given by:
\[
uvE_{\hat{\mu}}(V^\circ;u,v)  = (uv - 1)^{\dim P} + (-1)^{\dim P + 1} h^*(P,\nu;u,v).
\]
\end{theorem}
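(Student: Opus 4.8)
The plan is to reduce to the non-equivariant formula of Borisov--Mavlyutov and then track the action of the finite-order translation $g := \exp(-2\pi\sqrt{-1}\nu) \in (\C^*)^n$. First, realise $V^\circ$ as the (common) fibre of the family $X^\circ = \{\,\sum_u \lambda_u t^{\nu(u)} x^u = 0\,\}$ over $\C^*$ of Example~\ref{e:simpleaffine}. Since $\nu$ is $\Q$-affine, $\cS(\nu)$ is the trivial subdivision, so Corollary~\ref{c:hyper} gives $\psi_{X^\circ} = [V^\circ \circlearrowleft \hat{\mu}]$ with $\hat{\mu}$ acting by the action in the statement, and since that action has finite order the monodromy operator $T$ is semi-simple; thus $E(X^\circ_\infty,\hat{\mu};u,v) = E_{\hat{\mu}}(\psi_{X^\circ}) = E_{\hat{\mu}}(V^\circ;u,v)$. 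Now invoke the reduction to primitive middle cohomology recalled just before this theorem: because $g$ lies in the connected group $(\C^*)^n$ it acts trivially on $H^*((\C^*)^n)$, so weak Lefschetz shows that, $\hat{\mu}$-equivariantly, every $H^m_c(V^\circ)$ apart from $H^{n-1}_{\prim,c}(V^\circ)$ is a Tate-type summand pulled back from the torus, whence
\[
uv\, E_{\hat{\mu}}(V^\circ;u,v) = (uv-1)^{\dim P} + (-1)^{\dim P + 1}\bigl(1 + uv\, E_{\prim}(X^\circ_\infty,\hat{\mu};u,v)\bigr).
\]
Comparing with the claimed formula, it remains to prove $h^*(P,\nu;u,v) = 1 + uv\, E_{\prim}(X^\circ_\infty,\hat{\mu};u,v)$; equivalently, that $h^*(P,\nu;u,v)-1$ records the equivariant mixed Hodge numbers of $H^{n-1}_{\prim,c}(V^\circ)$ with its $g$-action.

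For this I would use the description of the mixed Hodge structure on the primitive middle cohomology of a non-degenerate affine torus hypersurface via residues and the Cox ring, due to Danilov--Khovanskii, Batyrev, Batyrev--Cox and (for the weight filtration) Borisov--Mavlyutov \cite{BMString}; the equivariant version of the underlying algorithm is \cite{MTMonodromy} (extending \cite{DKAlgorithm}). The crucial point is that in that description the Hodge--Deligne data of $H^{n-1}_{\prim,c}(V^\circ)$ is built, by an inclusion--exclusion over the faces $Q$ of $P$, from lattice points $u$ lying in relative interiors of dilates $mQ$, and $g$ acts on the corresponding basis vector through the monomial character $x^u \mapsto e^{-2\pi\sqrt{-1}\langle\nu,u\rangle} x^u$, i.e.\ with eigenvalue the weight $w(u) = [\nu(u)] \in \Q/\Z \cong \mathbb{S}^1_\Q$. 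Thus the non-equivariant face-sum of Danilov--Khovanskii/Borisov--Mavlyutov acquires exactly the $\Q/\Z$-weights of weighted Ehrhart theory, and it only remains to identify the resulting $\Z[\Q/\Z]$-valued, $(u,v)$-bigraded sum with $h^*(P,\nu;u,v)-1$ written through the local weighted $h^*$-polynomials $\lc(Q,\nu|_Q;u,v)$ of Definition~\ref{d:new}. This last identification is a purely combinatorial manipulation --- the equivariant lift of the argument of \cite[Section~5]{Geometry} and \cite{Combinatorics} --- governed by the identities in Theorem~\ref{t:combinatorics} and Section~\ref{subdivisions}. A convenient concrete model for the $g$-action is the finite \'etale cover $V^\circ \to W^\circ := V^\circ/\langle g\rangle$, which is the restriction of the torus isogeny $\Spec\C[M] \to \Spec\C[M']$ with $M' = \{u \in M \mid \langle \nu_0,u\rangle \in \Z\}$ (here $\nu = \langle\nu_0,\cdot\rangle + c$); $W^\circ$ is non-degenerate in $\Spec\C[M']$ with Newton polytope a translate of $P$ that is a lattice polytope over $M'$, one decomposes $H^*_c(V^\circ)$ over the characters of the cyclic deck group, and the nontrivial characters contribute no $(uv-1)^{\dim P}$ term since $H^*_c(\Spec\C[M'],\mathcal{L}_\chi) = 0$ for $\chi \ne 1$; reassembling over $\chi$ is exactly the coset-wise lattice-point bookkeeping of \cite{Weighted}.

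The main obstacle is this bigraded equivariant step: checking that $g$ acts on the residue/Cox-ring model of $H^{n-1}_{\prim,c}(V^\circ)$ through the monomial character $u \mapsto e^{-2\pi\sqrt{-1}\langle\nu,u\rangle}$ compatibly with both the Hodge and weight filtrations, and then matching the ensuing $\Z[\Q/\Z]$-weighted face-sum with Definition~\ref{d:new}. Once the $g$-action is pinned down on monomials, all Hodge-theoretic input is the non-equivariant one of \cite{Geometry, Combinatorics, BMString}, and what is left is the combinatorial identity furnished by Theorem~\ref{t:combinatorics} and the material of Section~\ref{s:weighted}.
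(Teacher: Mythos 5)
Your opening reductions (realising $V^\circ$ as the fibre of $X^\circ = \{\sum_u \lambda_u t^{\nu(u)} x^u = 0\}$ and isolating primitive middle cohomology via weak Lefschetz) match the paper's setup, and your intermediate target $h^*(P,\nu;u,v) = 1 + uv\,E_{\prim}(X^\circ_\infty,\hat{\mu};u,v)$ is exactly \eqref{e:mainhyper} specialised to this class of $X^\circ$. From there, however, you take a genuinely different route: you propose to prove this identity directly, by working out the $g$-action on an equivariant Batyrev--Cox/Borisov--Mavlyutov residue model, observing that monomials are eigenvectors with eigenvalue $w(u) = [\nu(u)]$, and then matching the resulting $\Z[\Q/\Z]$-weighted face-sum with $h^*(P,\nu;u,v)-1$. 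The paper is substantially more economical. It observes that the class of $X^\circ$ arising from such $V^\circ$ is closed under the stratification of Theorem~\ref{t:sumoverstrata}, so the chain of equivalences among Theorem~\ref{t:mainhyper}, Theorem~\ref{t:mainintersection}, \eqref{e:rewrite}, and \eqref{e:mainhyper} survives restriction to this class; it then imports only the \emph{one-variable} equivariant $\chi_y$-genus formula $uE_{\hat{\mu}}(V^\circ;u,1) = (u-1)^{\dim P} + (-1)^{\dim P+1}h^*(P,\nu;u)$ of \cite[Theorem~2.7]{MTMonodromy}, which settles everything at $v=1$; and finally upgrades to two variables by combining semi-simplicity of monodromy (so $E(X^\circ_\infty,\hat{\mu};u,v,w) = E_{\hat{\mu}}(V^\circ;uw,vw)$), the substitution $u \mapsto uw^{-1}$, and the identity $h^*(P,\nu;uw^{-1},1,w) = h^*(P,\nu;u,w)$ from Theorem~\ref{t:combinatorics}\eqref{prop7}. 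What the paper's route buys is that one never has to track a second Hodge index through an equivariant residue computation.

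The concern with your route is that the step you yourself flag as the main obstacle --- that $g$ acts through monomial characters compatibly with both the Hodge filtration \emph{and} the Borisov--Mavlyutov weight filtration, and that the resulting face-sum equals $h^*(P,\nu;u,v)-1$ --- is left as a sketch, and it is essentially the whole content of the theorem. Carrying it out would amount to re-deriving and strengthening (from $\chi_y$-genus to full Hodge--Deligne polynomial) the Matsui--Takeuchi result rather than citing it; that is legitimate but considerably harder, and without the equivariant compatibility of the weight filtration pinned down, the argument as written is incomplete. I would recommend the bootstrap: cite \cite[Theorem~2.7]{MTMonodromy} and use semi-simplicity plus Theorem~\ref{t:combinatorics}\eqref{prop7} to pass from $v=1$ to the bivariate statement.
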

\begin{proof}
Recall from  Example~\ref{e:simpleaffine} that the variety $X^\circ = \{ f = \sum_{u \in P \cap M} \lambda_u t^{\nu(u)} x^u = 0 \} \subseteq (\K^*)^n$ is sch\"on, and satisfies
$\psi_{X^\circ} = [V^\circ \circlearrowleft \hat{\mu}]$, and hence $E(X^\circ_\infty, \hat{\mu}; u,v) = E_{\hat{\mu}}(V^\circ;u,v)$. Moreover, the action of monodromy on cohomology is semi-simple and hence
\[
E(X^\circ_\infty, \hat{\mu}; u,v,w) = E(X^\circ_\infty, \hat{\mu}; uw,vw) = E_{\hat{\mu}}(V^\circ;uw,vw).
\]
If 
$X = \cup_{Q \subseteq P} X^\circ_Q$ denotes the stratification in the statement of Theorem~\ref{t:sumoverstrata}, then $X^\circ_Q$ is defined by an equation of the form
$\{ \sum_{u \in Q \cap M} \lambda_u t^{\nu(u)} x^u = 0 \} $. It follows that if we restrict Theorem~\ref{t:mainhyper} to all $X^\circ$ that appear from complex sch\"on hypersurfaces $V^\circ$ as above, then
 Theorem~\ref{t:mainhyper}, Theorem~\ref{t:mainintersection} together with \eqref{e:rewrite} and \eqref{e:mainhyper} are all still equivalent when restricted to this class of varieties over $\K$.

With the notation above, \cite[Theorem~2.7]{MTMonodromy} states that
\[
 uE_{\hat{\mu}}(V^\circ;u,1) = (u - 1)^{\dim P} + (-1)^{\dim P + 1} h^*(P,\nu;u).
\]
This establishes \eqref{e:rewrite} and \eqref{e:mainhyper} and hence Theorem~\ref{t:mainhyper} and Theorem~\ref{t:mainintersection} for $X^\circ$ under the specialization $v = 1$.
We conclude that
\[
uw E_{\hat{\mu}}(V^\circ;u,w) =  E(X^\circ_\infty, \hat{\mu}; uw^{-1},1,w)  = (uw - 1)^{\dim P} + (-1)^{\dim P + 1} h^*(P,\nu;uw^{-1},1,w).
\]
By \eqref{prop7} in Theorem~\ref{t:combinatorics},  $h^*(P,\nu;uw^{-1},1,w) = h^*(P,\nu;u,w)$. This completes the proof.

\end{proof}


We now complete the proofs of Theorem~\ref{t:mainhyper} and Theorem~\ref{t:mainintersection}, which we have established are both equivalent to 
 the equivalent statements \eqref{e:rewrite} and \eqref{e:mainhyper}.
\begin{proof}
By Corollary~\ref{c:hyper},
\[
E(X^\circ_\infty, \hat{\mu}; u,v) =  \sum_{\substack{F \in \cS(\nu) \\\ \sigma(F) = P}}   E_{\hat{\mu}}( V_{F}^\circ; u,v)(1 - uv)^{\dim P - \dim F}.
\]
Substituting in Theorem~\ref{t:DKformula} yields:
\begin{align*}
uvE(X^\circ_\infty, \hat{\mu}; u,v) &=  \sum_{\substack{F \in \cS(\nu) \\\ \sigma(F) = P}} \big[ (uv - 1)^{\dim F} + (-1)^{\dim F + 1} h^*(F,\nu|_F;u,v) \big] (1 - uv)^{\dim P - \dim F} \\
&= (uv - 1)^{\dim P} +  (-1)^{\dim P + 1}  \sum_{\substack{F \in \cS(\nu) \\\ \sigma(F) = P}} h^*(F,\nu|_F;u,v) (uv - 1)^{\dim P - \dim F} \\
&= (uv - 1)^{\dim P} +  (-1)^{\dim P + 1}  h^*(P,\nu;u,v).
\end{align*}
Here the last equality follows from \eqref{prop6} in Theorem~\ref{t:combinatorics}. This establishes \eqref{e:mainhyper} as desired.
\end{proof}








Using Remark~\ref{r:existence} and the above proof, we immediately deduce the following geometric description of the combinatorial invariants introduced in Section~\ref{s:weighted}.
As in Remark~\ref{r:forget}, the corresponding result for invariants with coefficients in $\Z$ can be found in \cite[Corollary~5.11,Corollary~6.3]{Geometry}.

 \begin{corollary}\label{c:explicit}
 Let $P$ be a full-dimensional lattice polytope in a lattice $M$ and let $\nu: P \rightarrow \R$ be the convex graph of an integral height function on $P$. Let $X^\circ =  \{ f = 0 \} \subseteq \Spec \K[M]$
 be a sch\"on hypersurface with Newton polytope $P$ such that $\nu_f = \nu$. Let $X$ denote the closure of $X^\circ$ in the projective toric variety over $\K$ corresponding to the normal fan of $P$.
Then
\[
h^*(P,\nu;u,v,w) = 1 + uvw^2 \sum_{p,q,r} \sum_{\alpha \in \Q/\Z} h^{p,q,r}(H_{\prim,c}^{\dim P - 1}(X^\circ_\infty))_\alpha \alpha  u^p v^q w^r .
\]
In particular, this specializes to:
\[
h^*(P,\nu;u,v) = 1 + uv \sum_{p,q} \sum_{\alpha \in \Q/\Z} h^{p,q}(H_{\prim,c}^{\dim P - 1}(X^\circ_\infty))_\alpha \alpha u^p v^q ,
\]
\[
h^*(P,\nu;u) = 1 + u \sum_{p} \sum_{\alpha \in \Q/\Z} \dim \Gr_F^p (H_{\prim,c}^{\dim P - 1}(X^\circ_\infty))_\alpha \alpha u^p,
\]
\[
h^*(P,\nu;1) = 1 +  \sum_{\alpha \in \Q/\Z} \dim  H_{\prim,c}^{\dim P - 1}(X^\circ_\infty)_\alpha [\alpha].
\]
Moreover,
\[
\lc(P, \nu;u,v) = uv \sum_{p,q} \sum_{\alpha \in \Q/\Z}  h^{p,q}(IH_{\prim}^{\dim P - 1}(X_\infty))_\alpha \alpha u^p v^q.
\]
This specializes to:
\[
\lc(P, \nu;u) = u \sum_{p} \sum_{\alpha \in \Q/\Z}   \dim \Gr_F^p(IH_{\prim}^{\dim P - 1}(X_\infty))_\alpha \alpha u^p,
\]
\[
\lc(P, \nu;1) =  \sum_{\alpha \in \Q/\Z}   \dim IH_{\prim}^{\dim P - 1}(X_\infty)_\alpha [\alpha].
\]
 \end{corollary}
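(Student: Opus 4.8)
The plan is to read off all of the stated formulas from Remark~\ref{r:existence} together with the identities already assembled in the course of proving Theorem~\ref{t:mainhyper} and Theorem~\ref{t:mainintersection}. Given the full-dimensional lattice polytope $P$ and the convex graph $\nu$, Remark~\ref{r:existence} furnishes a sch\"on hypersurface $X^\circ = \{ f = 0 \} \subseteq \Spec \K[M]$ with Newton polytope $P$ and $\nu_f = \nu$; let $X$ be its closure in the projective toric variety over $\K$ attached to the normal fan of $P$. Since the left-hand sides $h^*(P,\nu;u,v,w)$ and $\lc(P,\nu;u,v)$ depend only on the combinatorial data $(P,\nu)$, the resulting formulas will in particular show that the displayed (intersection) cohomology groups, together with their Hodge and monodromy filtrations, are independent of the choice of such $X^\circ$.

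For the hypersurface formula, I would combine Theorem~\ref{t:mainhyper} with the identity
\[
uvw^2E(X_\infty^\circ, \hat{\mu};u,v,w) = (uvw^2 - 1)^{\dim P} + (-1)^{\dim P + 1}\bigl(1 + uvw^2 E_{\prim}(X_\infty^\circ, \hat{\mu};u,v,w)\bigr)
\]
established in the paragraph preceding \eqref{e:rewrite}. Cancelling the common summand $(uvw^2 - 1)^{\dim P}$ and the sign $(-1)^{\dim P + 1}$ gives $h^*(P,\nu;u,v,w) = 1 + uvw^2 E_{\prim}(X_\infty^\circ, \hat{\mu};u,v,w)$, and unwinding the definition of $E_{\prim}$ yields the first asserted formula. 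The specialization at $w = 1$ then uses Theorem~\ref{t:combinatorics}\eqref{prop1} on the combinatorial side and $h^{p,q}(H^m_c(X_\infty))_\alpha = \sum_r h^{p,q,r}(H^m_c(X_\infty))_\alpha$ (Section~\ref{s:invariants}) on the geometric side; specializing further at $v = 1$ uses Theorem~\ref{t:combinatorics}\eqref{prop3} together with $\sum_q h^{p,q}(H^m_c(X_\infty))_\alpha = \dim \Gr_F^p H^m_c(X_\infty)_\alpha$; and setting $u = v = w = 1$ and summing also over $p$ gives the last formula.

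For the intersection-cohomology formula, I would combine Theorem~\ref{t:mainintersection} with the identities recorded just before \eqref{e:rewrite}, namely $E_{\inter}(X_\infty, \hat{\mu};u,v,w) = E_{\inter,\Lef}(P;uvw^2) + (-1)^{\dim P + 1}E_{\inter, \prim}(X_\infty, \hat{\mu};u,v,w)$ together with the purity relation $E_{\inter, \prim}(X_\infty, \hat{\mu};u,v,w) = w^{\dim P - 1}E_{\inter, \prim}(X_\infty, \hat{\mu};u,v,1)$. Cancelling $E_{\inter,\Lef}(P;uvw^2)$, the sign, and the factor $w^{\dim P - 1}/(uvw^2)$ reduces Theorem~\ref{t:mainintersection} precisely to the relation \eqref{e:rewrite}, that is, $uv E_{\inter, \prim}(X_\infty, \hat{\mu};u,v,1) = \lc(P,\nu;u,v)$. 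Expanding the definition of $E_{\inter,\prim}$ and using that primitive intersection cohomology is pure of weight $\dim P - 1$, so that only the term $r = \dim P - 1$ survives, yields the formula for $\lc(P,\nu;u,v)$; the two further specializations follow exactly as in the hypersurface case, using Theorem~\ref{t:combinatorics}\eqref{prop3'} and the analogous relations $\sum_q h^{p,q} = \dim \Gr_F^p$ and $\sum_p \dim \Gr_F^p = \dim$ for $IH^{\dim P - 1}_{\prim}(X_\infty)$.

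The entire content is carried by Theorem~\ref{t:mainhyper}, Theorem~\ref{t:mainintersection} (and hence ultimately Theorem~\ref{t:DKformula}) and Remark~\ref{r:existence}, so there is no genuine obstacle here; the only thing requiring care is the bookkeeping, i.e.\ matching the purely combinatorial specializations of Theorem~\ref{t:combinatorics} with the Hodge-theoretic ones, and keeping the normalizing factors $uvw^2$ versus $w^{\dim P \pm 1}$ and the signs $(-1)^{\dim P + 1}$ consistent throughout.
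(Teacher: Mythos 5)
Your proof is correct and follows exactly the paper's own approach: the paper simply states that the corollary follows "Using Remark~\ref{r:existence} and the above proof," where "the above proof" is precisely the chain of identities $uvw^2E(X_\infty^\circ, \hat{\mu};u,v,w) = (uvw^2-1)^{\dim P} + (-1)^{\dim P+1}(1 + uvw^2 E_{\prim})$, the purity statement $E_{\inter,\prim}(X_\infty,\hat\mu;u,v,w) = w^{\dim P - 1}E_{\inter,\prim}(X_\infty,\hat\mu;u,v,1)$, and the reductions \eqref{e:rewrite}, \eqref{e:mainhyper} that you invoke. Your spelled-out bookkeeping of the specializations via Theorem~\ref{t:combinatorics}\eqref{prop1},\eqref{prop3},\eqref{prop3'} and the corresponding Hodge-theoretic sums is exactly what the paper leaves implicit.
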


\begin{example}\label{e:orbifold}
Let $P$ be a lattice polytope in a lattice $N$ such that $P$ contains the origin in its relative interior, and let $\{ b_i \}$ denote the vertices of $P$. Let $\nu: P \rightarrow \R$ be the piecewise $\Q$-affine function with value $0$ at the origin and value $1$ on the boundary of $P$.  
For an explicit example, see Example~\ref{e:old}.
Let $X^\circ =  \{ f = 0 \} \subseteq \Spec \K[N]$
 be a sch\"on hypersurface with Newton polytope $P$ such that $\nu_f = \nu$. For example, if $g$ is a sch\"on complex polynomial with Newton polytope $P$, then take $f = tg - \lambda$ for a generic choice of $\lambda \in \C^*$. Define the \define{primitive spectrum} of $f$ to be:
\[
\Sp_{f}^{\prim}(t) := 1 + \sum_{\beta \in  (0,1] \cap \Q} \sum_{p,q} h^{p,q}(H^{n - 1}_{\prim,c}(X^\circ_{\infty}))_{\exp(2\pi \sqrt{-1} \beta)} t^{p + \beta}.
\]

Consider the weighted $h^*$-polynomial $h^*(P,\nu; u) = 1 + u \sum_{\beta \in (0,1]}  \sum_p h^*_{p,\beta} [\beta] u^p \in \Z[\Q/\Z][u]$, for some non-negative integers $h^*_{p,\beta}$.
We consider an alternative way of encoding this polynomial.
Specifically, for some sufficiently divisible positive integer $N$, define
\[
\tilde{h}(P; t) := 1 + \sum_{\beta \in (0,1]}  \sum_p h^*_{p,\beta} t^{p + \beta} \in \Z[t^{1/N}].
\]
This was the definition of the weighted $h^*$-polynomial given in \cite{Weighted}. 
Note that the $h^*$-polynomial $h^*(P;t)$ is obtained from $\tilde{h}(P; t)$ by rounding up the exponents of $t$ to the nearest integer.
By \cite[Corollary~2.12]{Weighted}, we have the symmetry:
\[
\tilde{h}(P; t)  = t^{\dim P} \tilde{h}(P; t^{-1}).
\]

Consider the complete fan $\Sigma'$ in $N_\R$ with cones given by the cones over the proper faces of $P$, and let $\Sigma$ be a simplicial fan refinement of $\Sigma'$ with the same rays. Then the
  the vertices $\{ b_i \}$  of $P$ constitute a choice of a non-zero lattice point on each ray of $\Sigma$. As in Remark~\ref{r:old}, the triple $\bSigma = (N,\Sigma,\{ b_i \})$ is a stacky fan and we may consider the
  corresponding Deligne-Mumford stack $\mathcal{X} = \mathcal{X}(\bSigma)$ with coarse moduli space
the toric variety associated to the fan $\Sigma$ \cite{BCSOrbifold}.
The theory of orbifold cohomology, developed by Chen and Ruan \cite{CROrbifold, CRNew},
associates to $\mathcal{X}$ a finite-dimensional $\mathbb{Q}$-algebra
$H_{\orb}^{*}(\mathcal{X}, \mathbb{Q})$, graded by $\mathbb{Q}$.





By \cite[Theorem~4.3]{Weighted}, together with Corollary~\ref{c:explicit}, we see that the primitive spectrum of $f$ coincides with the Betti polynomial of $\mathcal{X}$:
\[
\Sp_{f}^{\prim}(t) = \sum_{i \in \Q}  \dim H_{\orb}^{2i}(\mathcal{X}, \mathbb{Q}) t^i = \tilde{h}(P; t).
\]
For an explicit example (cf.  \cite[Example~1.1]{Weighted}), in
Example~\ref{e:old} we computed:
\[ h^*(P,\nu; u)  = 1+ 4u + u^{2}    + 2u(1 + u)[1/2] + u[2/3]  + u^2[1/3].      \]
Equivalently,
\[
\tilde{h}(P; t) = 1 + 2t^{1/2} + t^{2/3} + 4t + t^{4/3} + 2t^{3/2}+ t^2.
\]

\end{example}

\section{Applications to 
the monodromy of complex polynomials}\label{s:sing}

In this section, we apply Theorem~\ref{t:mainhyper} to deduce explicit combinatorial equations for some important invariants associated to  the monodromy of complex polynomials. 
Recall that we identify the group  $\Q/\Z$ with the group $\mathbb{S}^1_\Q$ of rational points on the circle $\{ z \in \C \mid |z| = 1 \}$, sending $[k] \in \Q/\Z$ to $\alpha = e^{2 \pi \sqrt{-1}k} \in \mathbb{S}^1_\Q$.
We will continue with the notation of Section~\ref{s:degenerations}, Section~\ref{s:weighted} and Section~\ref{s:hyper}, and set $M = \Z^n$.

\subsection{Degenerations of hypersurfaces in affine space} \label{s:general}

We first consider applications of the results of Section~\ref{s:hyper} for families of hypersurfaces of affine space.
Let $X^\circ \subseteq (\K^*)^n$ be a sch\"{o}n hypersurface, with associated Newton polytope and convex graph of an integral height function $(P,\nu)$  and $\dim P = n$.
Assume that $P \subseteq \R_{\ge 0}^n$,
 and for each (possibly empty) subset $S$ of $\{1,\ldots,n\}$, let
$\R^S = \{ (v_1,\dots,v_n) \mid v_i = 0 \textrm{ if } i \notin S \}$, $\Z^S = \R^S \cap  \Z^n$ and $P^S = P \cap \R^S$. We assume that $P$ is \define{convenient} in the sense that $\dim P^S$ equals the cardinality   $| S |$ of $S$ for every
subset  $S$ of $\{1,\ldots,n\}$. Equivalently, $P$ contains the origin and has non-zero intersection with each  ray through a coordinate vector.
We call a face (including the empty face) of $P$ that does not contain the origin, a \define{face at infinity}, and write $P_\infty$ for the union of faces at infinity of $P$.
Then the faces of $P$ are precisely the faces at infinity together
with $\{ P^S \mid S \subseteq \{ 1, \ldots, n\} \}$. Let $X$ denote the closure of $X^\circ$ in $\K^n$. Then $X_{\gen} \subseteq \C^n$ is a smooth hypersurface and, as in Section~\ref{s:hyper}, one
has
a weak Lefschetz result  (see, for example, \cite[Corollary~3.8]{DKAlgorithm}) stating that $H^{2(n - 1)}_c(X_{\gen}) = \C$ with trivial monodromy action and the only other non-zero cohomology is in middle-dimension
$H^{n - 1}_c(X_{\gen})$.
By Corollary~\ref{c:hyper},  the  motivic nearby fiber of $X$ is given by
\begin{equation}\label{e:mnf}
\psi_{X} =  \sum_{S \subseteq \{ 1, \ldots, n \} } \sum_{\substack{F \in \cS(\nu) \\\ \sigma(F) = P^S}}  [V_{F}^\circ \circlearrowleft \hat{\mu}](1 - \L)^{|S| - \dim F},
\end{equation}
where $\sigma(F)$ is the smallest face of $P$ containing $F$.
After rearranging terms, Theorem~\ref{t:mainhyper} implies that
\begin{equation}\label{e:refined}
uvw^2E(X_\infty, \hat{\mu};u,v,w) =  (uvw^2)^n + (-1)^{n - 1} \sum_{S \subseteq \{ 1, \ldots, n \} } (-1)^{n - |S|} h^*(P^S, \nu|_{P^S}; u,v,w).
\end{equation}
Specializing by setting $w = 1$ gives a formula for the equivariant limit Hodge-Deligne polynomial. We may specialize further by setting $v = 1$ and then $u = 1$. In particular,
by Example~\ref{e:volume},
we have  the following formula for the eigenvalues (with multiplicity) of the action of monodromy on the cohomology of $X_{\gen}$:
\begin{equation}\label{e:char}
\sum_{\alpha \in \Q/\Z} \dim  H_{c}^{n - 1}(X_\infty)_\alpha \alpha =  \sum_{S \subseteq \{ 1, \ldots, n \} }  \sum_{ \substack{F \in \cS(\nu)|_{P^S} \\ \dim F = \dim P^S} }   (-1)^{n - |S|} \frac{\Vol(F)}{m_F} \sum_{i = 0}^{m_F - 1} [i/m_F],
\end{equation}
where $m_F$ is the minimal positive integer such that  $m_F \nu|_{F}$ is an affine 
function with respect to $\Z^S$. Here $\Vol(F)$ is the normalized volume of $F$ and $\Vol(F)/m_F$ is a positive integer.

\begin{remark}\label{r:notconvenient}
With the setup above, do not assume that $P$ is convenient, and
instead
only assume that $P$ contains the origin.
Then we may not apply the same weak Lefschetz argument.
However, we still obtain the
following formulas for the motivic nearby fiber and
equivariant refined limit  Hodge-Deligne polynomial:
\[
\psi_{X} =
\sum_{  S \subseteq \{ 1, \ldots, n \} } (-1)^{|S| - \dim P^S} \sum_{\substack{F \in \cS(\nu) \\\ \sigma(F) = P^S}}  [V_{F}^\circ \circlearrowleft \hat{\mu}](1 - \L)^{|S| - \dim F}.
\]
\[
uvw^2E(X_\infty, \hat{\mu};u,v,w) = (uvw^2)^n + \sum_{ S \subseteq \{ 1, \ldots, n \}  }  (-1)^{\dim P^S - 1} (uvw^2 - 1)^{|S| - \dim P^S} h^*(P^S, \nu|_{P^S}; u,v,w).
\]
Specializing at $u = v = w =  1$ gives:
\begin{align*}
E(X_\infty, \hat{\mu};1,1,1) &= \sum_m \sum_{\alpha \in \Q/\Z} (-1)^m \dim  H_{c}^{m}(X_\infty)_\alpha \alpha \\
&= \sum_{ \substack{ \emptyset \ne S \subseteq \{ 1, \ldots, n \} \\ \dim P^S = |S|} } (-1)^{|S| - 1}
 \sum_{ \substack{F \in \cS(\nu)|_{P^S} \\ \dim F = \dim P^S} }   \frac{\Vol(F)}{m_F} \sum_{i = 0}^{m_F - 1} [i/m_F],
\end{align*}
where $m_F$ is the minimal positive integer such that  $m_F \nu|_{F}$ is an affine 
function with respect to $\Z^S$. 
\end{remark}

\emph{Assume further that the restriction of $\nu$ to $P_\infty$ 
is constant}.
Substitute in Definition~\ref{d:new} and observe that $g([P^S, P^{S'}]; t) = 1$ for any $S \subseteq S'$ by Example~\ref{e:boolean}.
By assumption, for every face $Q$ at infinity, $\lc(Q, \nu|_Q;u,v) = v^{\dim Q + 1} \lc(Q;uv^{-1})$.
 In this case,  we may rewrite \eqref{e:refined} as:
\[
uvw^2E(X_\infty, \hat{\mu};u,v,w)  = (uvw^2)^n + (-1)^{n - 1} \big[ F(uw,vw) +w^{n + 1} \lc(P,\nu;u,v) \big],
\]
where
\[
F(u,v)  :=  \sum_{ Q \subseteq P_\infty  } v^{\dim Q + 1} \lc(Q;uv^{-1}) G(Q;uv),
\]
\[
G(Q;t) := \sum_{ \substack{ S \subseteq \{1,\ldots,n\} \\  Q \subseteq   P^S} } (-1)^{n - |S|}g([Q,P^S]; t).
\]
Using Theorem~\ref{t:Eulerianinverse} and Example~\ref{e:trivialdecomp}, we have:
\begin{align*}
G(Q;t) &= \sum_{Q \subseteq Q' \subseteq P_\infty} (-1)^{n - 1 - \dim Q'} g([Q,Q']; t)g([Q',P]^*; t) \\
&= \big[ \sum_{ \substack{ S \subseteq \{1,\ldots,n\} \\  Q \subseteq   P^S} } (-1)^{n - |S|}  h(\lk_{\cS(\nu)|_{P^S}}(Q);t) \big] - l_P(\cS(\nu),Q;uv). 
\end{align*}

\begin{example}\label{e:G}
With the notation above, let $Q$ be a face at infinity and consider $G(Q;t)$.  By definition, $G(Q;t)$ has degree strictly less than $(\dim P - \dim Q)/2$.
If $\Int (Q) \subseteq \R_{> 0}^n$, then $G(Q;t) = g([Q,P];t)$ has constant term $1$. Otherwise, $G(Q;t)$ has no constant term.
Using Example~\ref{e:linear}, when $Q = \emptyset$, we compute that  the linear coefficient of $G(Q;t)$ is the number of vertices of $P$ contained in $\R_{> 0}^n$.

One then computes that $F(u,v)$ has the form
\[
F(u,v) = \big[  \sum_{ \substack{ Q \subseteq P_\infty \\ \dim Q \le 1 \\ \Int(Q) \subseteq \R_{> 0}^n}  }   \#( \Int(Q) \cap \Z^n)\big]  uv + \big[  \sum_{ \substack{ Q \subseteq P_\infty \\ \dim Q =2 \\ \Int(Q) \subseteq \R_{> 0}^n}  }
 \#( \Int(Q) \cap \Z^n)\big]  uv(u + v) + \beta(u,v),
\]
where every term in  $\beta(u,v)$ has combined degree in $u$ and $v$ at least $4$.
\end{example}

Observe that every term in $F(uw,vw)$ has combined degree in $u$ and $v$ equal to its degree in $w$. Hence, the only contribution above corresponding to a non-trivial action of monodromy
 is
$(-w)^{n+ 1} \lc(P,\nu;u,v)$. We conclude that we have the following corollary.


\begin{corollary}\label{c:affine}
Let $X^\circ \subseteq (\K^*)^n$ be a sch\"{o}n hypersurface, with associated Newton polytope and convex graph of an integral height function $(P,\nu)$  and $\dim P = n$.
Let $X \subseteq \K^n$ denote the closure of $X^\circ$ in $\K^n$.
Assume that $P \subseteq \R_{\ge 0}^n$ is convenient, and  the restriction of $\nu$ to every face at infinity is constant. Then the action of monodromy is trivial on the
graded pieces $\Gr^W_{r} H^{n - 1}_c (X_{\gen})$ of the Deligne weight filtration for $r \ne n - 1$, and we have the  following explicit combinatorial formula for the equivariant limit mixed Hodge numbers
of $\Gr^W_{ n - 1} H^{n - 1}_c (X_{\gen})$:
\[
uv \sum_{p,q} \sum_{\alpha \in \Q/\Z} h^{p,q}(\Gr^W_{n - 1} H^{n - 1}_c (X_{\infty}))_{\alpha} \alpha u^p v^q  =  \lc(P,\nu; u,v).
\]
\end{corollary}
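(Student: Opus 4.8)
The plan is to extract both assertions from the identity
\[
uvw^2E(X_\infty, \hat\mu;u,v,w) = (uvw^2)^n + (-1)^{n-1}\big[F(uw,vw)+w^{n+1}\lc(P,\nu;u,v)\big]
\]
displayed just before the statement (the rewriting of \eqref{e:refined} under the hypothesis that $\nu$ is constant on every face at infinity), once the part of $E$ coming from $H^{n-1}_c(X_{\gen})$ has been isolated. First I would invoke the weak Lefschetz statement recalled earlier in this section: $H^m_c(X_{\gen})=0$ for $m\notin\{n-1,2(n-1)\}$, and $H^{2(n-1)}_c(X_{\gen})\cong\C$ is pure of type $(n-1,n-1)$ with trivial monodromy, hence contributes exactly $(uvw^2)^{n-1}$ to $E(X_\infty,\hat\mu;u,v,w)$. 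Combining this with the identity above (the $(-1)^{n-1}$ from $(-1)^m$ with $m=n-1$ cancelling the one in the identity) yields
\[
\sum_{p,q,r}\sum_{\alpha\in\Q/\Z}h^{p,q}(\Gr^W_r H^{n-1}_c(X_\infty))_\alpha\,\alpha\,u^pv^qw^r = (uvw^2)^{-1}F(uw,vw)+w^{n-1}(uv)^{-1}\lc(P,\nu;u,v),
\]
so the whole corollary reduces to reading off the $w$-supports of the two summands on the right.

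The second summand is immediate: $\lc(P,\nu;u,v)$ is divisible by $uv$ (Example~\ref{e:weightedlocal}), so $w^{n-1}(uv)^{-1}\lc(P,\nu;u,v)$ is a polynomial every monomial of which has $w$-degree exactly $n-1$. For the first summand, the hypothesis that $\nu$ is constant on every face at infinity makes each term of $F(u,v)=\sum_{Q\subseteq P_\infty}v^{\dim Q+1}\lc(Q;uv^{-1})G(Q;uv)$ carry no nontrivial $\Q/\Z$-grading, so $F\in\Z[u,v]$; moreover $v^{\dim Q+1}\lc(Q;uv^{-1})$ is homogeneous of degree $\dim Q+1$ in $(u,v)$ and $G(Q;uv)$ has total $(u,v)$-degree $<n-\dim Q$ by the bound $\deg_t G(Q;t)<(\dim P-\dim Q)/2$ noted in Example~\ref{e:G}, so $F$ has total degree at most $n$ (and is divisible by $uv$). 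Since, as already observed, every monomial of $F(uw,vw)$ has $w$-degree equal to its $(u,v)$-degree, it follows that each monomial of $(uvw^2)^{-1}F(uw,vw)$ has the form $c\,u^pv^qw^r$ with $c\in\Z$, $r=p+q$, and $r\le n-2$.

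Putting the two summands together, for $r\ne n-1$ only the $F$-summand contributes to the coefficient of $w^r$, so $h^{p,q}(\Gr^W_r H^{n-1}_c(X_\infty))_\alpha=0$ unless $p+q=r$ and $\alpha=1$. The vanishing for $p+q\ne r$ means the monodromy weight filtration $M(r)_\bullet$ on $\Gr^W_r H^{n-1}_c(X_\infty)$ is concentrated in degree $r$; as it coincides with the $N(r)$-weight filtration centered at $r$, the condition $N(r)(M(r)_k)\subseteq M(r)_{k-2}$ of Definition~\ref{d:weight} gives $N(r)(\Gr^W_r)\subseteq M(r)_{r-2}=0$, so $N(r)=0$ and $T_u$ acts trivially on $\Gr^W_r$; the vanishing for $\alpha\ne1$ forces the semisimple part $T_s$ to act with only the eigenvalue $1$, hence trivially, and therefore $T$ is trivial on $\Gr^W_r H^{n-1}_c(X_{\gen})$ whenever $r\ne n-1$. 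Comparing coefficients of $w^{n-1}$, the $F$-summand contributes nothing (this is exactly where the bound $\deg F\le n<n+1$ is used), leaving $\sum_{p,q}\sum_\alpha h^{p,q}(\Gr^W_{n-1}H^{n-1}_c(X_\infty))_\alpha\,\alpha\,u^pv^q=(uv)^{-1}\lc(P,\nu;u,v)$, which is the asserted formula after clearing the denominator. The one part that needs genuine care is the degree-and-support bookkeeping for $F$ in the middle paragraph---the bound $\deg F\le n$ and the triviality of its $\Q/\Z$-grading---which is precisely where the hypotheses that $P$ is convenient and that $\nu$ is constant on the faces at infinity are used.
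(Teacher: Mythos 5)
Your argument is correct and follows the paper's approach: both start from the identity $uvw^2E(X_\infty,\hat\mu;u,v,w)=(uvw^2)^n+(-1)^{n-1}[F(uw,vw)+w^{n+1}\lc(P,\nu;u,v)]$ obtained from \eqref{e:refined}, use weak Lefschetz to isolate the $H^{n-1}_c$ part, note that all monomials of $F(uw,vw)$ have $w$-degree equal to $(u,v)$-degree (so they carry $r=p+q$ and $\alpha=1$), and conclude. You are more explicit than the paper where it matters: the paper's one-line "the only contribution corresponding to a non-trivial action of monodromy is $(-w)^{n+1}\lc(P,\nu;u,v)$" leaves the reader to supply both the degree bound $\deg F\le n$ (needed so that $F$ does not contribute to the coefficient of $w^{n-1}$, which you correctly derive from $\deg_t G(Q;t)<(\dim P-\dim Q)/2$ in Example~\ref{e:G}) and the translation from "$h^{p,q,r}$ supported on $p+q=r$, $\alpha=1$" to "$N(r)=0$ and $T_s=1$ on $\Gr^W_r$" via Definition~\ref{d:weight}, both of which you spell out correctly.
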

In particular, one may use Definition~\ref{d:weight} to give a formula for the Jordan block structure of the action of monodromy on $\Gr^W_{ n - 1} H^{n - 1}_c (X_{\gen})$.

\begin{example}\label{e:dim2}
With the notation of Corollary~\ref{c:affine}, let $n = 2$ and consider $X \subseteq \K^2$. Using Example~\ref{e:smallterms} and Example~\ref{e:G}, we compute
\[
E(X_\infty, \hat{\mu};u,v,w)  = uvw^2 - \big[ b +w[ h^*_{0,0,1}(P,\nu)(1 + uv) + h^*_{0,1,1}(P,\nu)v + \bar{h^*_{0,1,1}(P,\nu)}u]   \big],
\]
where $b = \#(\partial P \cap \Z^{2}_{> 0})$, $\partial P$ denotes the boundary of $P$, and 
\[
h^*_{0,0,1}(P,\nu) = \sum_{\substack{F \in \cS(\nu) \\\ \dim F \le 1 \\\ \sigma(F) = P}} \sum_{v \in \Int(F) \cap \Z^2} w(v), \; \;
h^*_{0,1,1}(P,\nu) =  \sum_{\substack{F \in \cS(\nu) \\\ \dim F = 2 \\\ \sigma(F) = P}} \sum_{v \in \Int(F) \cap \Z^2} w(v).
\]
In particular, the action of monodromy on $\Gr^W_{0} H^{1}_c (X_{\gen})$ is trivial and $\dim \Gr^W_{0} H^{1}_c (X_{\gen}) = b$.
The equivariant limit mixed Hodge numbers of  $\Gr^W_{1} H^{1}_c (X_{\gen})$ are given by
\[
\sum_{\alpha \in \Q/\Z} h^{0,0}(\Gr^W_{1} H^{1}_c (X_{\infty}))_\alpha \alpha = \sum_{\alpha \in \Q/\Z} h^{1,1}(\Gr^W_{1} H^{1}_c (X_{\infty}))_\alpha \alpha = h^*_{0,0,1}(P,\nu),
\]
\[
\sum_{\alpha \in \Q/\Z} h^{0,1}(\Gr^W_{1} H^{1}_c (X_{\infty}))_\alpha \alpha = \sum_{\alpha \in \Q/\Z} h^{1,0}(\Gr^W_{1} H^{1}_c (X_{\infty}))_\alpha \alpha^{-1} = h^*_{0,1,1}(P,\nu).
\]
The Jordan normal form of the action of monodromy on $\Gr^W_{1} H^{1}_c (X_{\gen})$ has
\[ h^{0,1}(\Gr^W_{1} H^{1}_c (X_{\infty}))_\alpha + h^{1,0}(\Gr^W_{1} H^{1}_c (X_{\infty}))_\alpha\]
Jordan blocks of size $1$ with eigenvalue $\alpha$, and
$h^{0,0}(\Gr^W_{1} H^{1}_c (X_{\infty}))_\alpha $
Jordan blocks of size $2$ with eigenvalue $\alpha$.
\end{example}

\begin{remark}\label{r:limit}
Specializing by setting $w = 1$ above gives a formula for the equivariant limit mixed Hodge numbers:
\[
uv \sum_{p,q} \sum_{\alpha \in \Q/\Z} h^{p,q}( H^{n - 1}_c (X_{\infty}))_{\alpha} \alpha u^p v^q  = F(u,v) +  \lc(P,\nu; u,v).
\]
Substituting in the definitions and simplifying, 
the right hand side may be rewritten as follows:
\[
\sum_{\substack{ F \in \cS(\nu) \\  F \nsubseteq P_\infty 
} } v^{\dim F + 1}\lc(F, \nu|_F; uv^{-1})  l_P(\cS(\nu),F;uv) 
\]
\[
 + 
 \sum_{Q \subseteq P_\infty}
v^{\dim Q + 1} \lc(Q;uv^{-1}) \sum_{ \substack{ S \subseteq \{1,\ldots,n\} \\  Q \subseteq   P^S} } (-1)^{n - |S|} h(\lk_{\cS(\nu)|_{P^S}}(Q);uv).
\]
\end{remark}



We present the following two 
 examples of the above situation. Below we let $f \in \C[x_1, \ldots, x_n]$ be a complex polynomial with Newton polytope $\NP(f)$.  For every face $Q$ of $\NP(f)$, we let
$\Delta_Q$ denote the convex hull of $Q$ and the origin. We set $P = \Delta_{\NP(f)}$.
We assume that $f$ is \define{convenient} in the sense that $\NP(f)$  has non-zero intersection with each  ray through a coordinate vector \cite{Kou}.  Recall that for a complex hypersurface of a torus,
sch\"on is also called \emph{non-degenerate}. Let $\Gamma_+(f) = \NP(f) + \R_{\ge 0}^n$ denote the \define{Newton polyhedron} of $f$.
Observe that every bounded face of $\Gamma_+(f)$ is a  face of $NP(f)$. We let $\Gamma_f$ denote the union of the bounded faces of $\Gamma_+(f)$.

\begin{remark}\label{non-zero}
For any proper face $Q$ of $\NP(f)$ not containing the origin, consider a $\Q$-affine function $\nu$ on $\Delta_Q$ with value $m \pm 1$ at the origin, and value $m$ on $Q$, for some integer $m$. Consider $\lc(Q,\nu;u) \in \Z[\Q/\Z][u]$
as a sum of integer valued polynomials indexed by $\alpha \in \Q/\Z$. Then the coefficient of $\alpha = 1$ is zero. To see this, one may for example use Remark~\ref{r:decompose} to reduce to the case when
$Q$ is a simplex,
and then apply Example~\ref{e:simplex}.
\end{remark}

\begin{example}[Monodromy at $0$]\label{e:0}
Assume that $f \in \C[x_1, \ldots, x_n]$ has no constant term, $f$ is convenient and  $\{ f =  0 \} \subseteq (\C^*)^n$ defines a sch\"on hypersurface. Then for a general choice of $\lambda \in \C^*$,  $X^\circ = \{ f = \lambda t \} \subseteq (\K^*)^n$ defines a sch\"{o}n,
convenient hypersurface with Newton polytope $P$, and $\nu_0 := \nu$ is the piecewise $\Q$-affine function on $P$ with value $1$ at the origin and value identically $0$ on $NP(f)$.
The cells of $\cS(\nu_0)$ are given by the union of  $\{ Q \mid Q \subseteq \NP(f) \}$  and $\{ \Delta_Q \mid Q \subseteq \Gamma_f \}$. By \eqref{e:mnf},
we have the following equation for the motivic nearby fiber:
\begin{equation}\label{e:mnf0}
\psi_{X} =  \sum_{Q \subseteq \Gamma_f }  [V_{\Delta_Q}^\circ \circlearrowleft \hat{\mu}](1 - \L)^{\dim \sigma(\Delta_Q) - \dim \Delta_Q} +  \sum_{\substack{Q \subseteq \NP(f) \\\ Q \nsubseteq P_\infty}}  [V_{Q}^\circ](1 - \L)^{\dim \sigma(Q) - \dim Q}.
\end{equation}
By Corollary~\ref{c:affine}, the  action of monodromy is trivial on the
graded pieces $\Gr^W_{r} H^{n - 1}_c (X_{\gen})$ of the Deligne weight filtration for $r \ne n - 1$, and the equivariant limit mixed Hodge numbers
of $\Gr^W_{ n - 1} H^{n - 1}_c (X_{\gen})$ are given by:
\[
uv \sum_{p,q} \sum_{\alpha \in \Q/\Z} h^{p,q}(\Gr^W_{n - 1} H^{n - 1}_c (X_{\infty}))_{\alpha} \alpha u^p v^q  =  \lc(P,\nu_0; u,v).
\]
By Remark~\ref{r:limit} and Remark~\ref{non-zero}, we have the following formulas for the equivariant limit mixed Hodge numbers of $H^{n - 1}_c (X_{\gen})$:
\[
uv \sum_{p,q} \sum_{ \substack{ \alpha \in \Q/\Z \\ \alpha \ne 1} } h^{p,q}( H^{n - 1}_c (X_{\infty}))_{\alpha} \alpha u^p v^q  =  \sum_{Q \subseteq \Gamma_f } v^{\dim \Delta_Q + 1}\lc( \Delta_Q , \nu_0|_{ \Delta_Q }; uv^{-1})   l_P(\cS(\nu_0),\Delta_Q;uv), 
\]
\begin{align*}
uv \sum_{p,q} h^{p,q}( H^{n - 1}_c (X_{\infty}))_{1}  u^p v^q  &=  \sum_{\substack{Q \subseteq \NP(f) \\\ Q \nsubseteq P_\infty}}
v^{\dim Q + 1}\lc( Q , \nu_0|_{ Q }; uv^{-1})  
l_P(\cS(\nu_0), Q;uv),  \\ &+  \sum_{Q \subseteq P_\infty}
v^{\dim Q + 1} \lc(Q;uv^{-1}) \sum_{ \substack{ S \subseteq \{1,\ldots,n\} \\  Q \subseteq   P^S} } (-1)^{n - |S|} h(\lk_{\cS(\nu_0)|_{P^S}}(Q);uv).
\end{align*}
By \eqref{e:char}, we have  a formula for the eigenvalues (with multiplicity) of the action of monodromy on the cohomology of $X_{\gen}$. Explicitly,
$\sum_{\alpha \in \Q/\Z} \dim  H_{c}^{n - 1}(X_\infty)_\alpha \alpha$ is equal to
\begin{equation}\label{e:0eigen}
 \sum_{  \substack{ Q \subseteq \Gamma_f \\ \dim \sigma(\Delta_Q) = \dim \Delta_Q}} (-1)^{n - 1 - \dim Q} \Vol(Q) \sum_{i = 0}^{m(Q) - 1} [i/m(Q)] +
\sum_{\substack{Q \subseteq \NP(f) \\\ Q \nsubseteq P_\infty \\\ \dim \sigma(Q) = \dim Q}} (-1)^{n - \dim Q} \Vol(Q),
\end{equation}
where $\Vol(Q)$ is the normalized volume of $Q$ and $m(Q)$ is the \define{lattice distance} of $Q$ from the origin i.e. $m(Q)$ is
the minimal positive integer such that  $m(Q) \nu_0|_{\Delta_Q}$ is an affine 
function with respect to the lattice given by intersecting $M$ with the affine span of $\Delta_Q$. When $Q$ is empty,
$\Vol(Q) = m(Q) = 1$ and $\dim Q = -1$.

\end{example}


\begin{example}[Monodromy at infinity]\label{e:infty}
Assume that $f \in \C[x_1, \ldots, x_n]$ is convenient. Assume that $f$ is \define{sch\"on at infinity},  meaning that  $\{ f|_Q =  0 \} \subseteq (\C^*)^n$ defines a smooth hypersurface whenever $Q$ is a face of $P$ at infinity.
Then for a general choice of $\lambda \in \C^*$,  $X^\circ = \{ ft = \lambda \} \subseteq (\K^*)^n$ defines a sch\"{o}n,
convenient hypersurface with Newton polytope $P$, and $\nu_\infty := \nu$ is the piecewise $\Q$-affine function on $P$ with value $0$ at the origin and value identically $1$ on the faces at infinity of $P$.
The cells of $\cS(\nu_\infty)$ are given by the union of  $\{ Q \mid Q \subseteq P_\infty \}$  and $\{ \Delta_Q \mid Q \subseteq P_\infty \}$.
 By \eqref{e:mnf},  we have the following equation for the motivic nearby fiber:
\[
\psi_{X} =  \sum_{Q \subseteq P_\infty}  [V_{\Delta_Q}^\circ \circlearrowleft \hat{\mu}](1 - \L)^{\dim \sigma(\Delta_Q) - \dim \Delta_Q}.
\]
The \emph{motivic nearby fiber at infinity} of a convenient polynomial $f$ was introduced independently and from different perspectives  in \cite{MTMonodromy} and \cite{Rai}, and an explicit formula was given in \cite[Theorem~5.3]{MTMonodromy},
which agrees with our formula for the motivic nearby fiber above. In particular, the motivic nearby fiber at infinity is equal to the usual motivic nearby fiber of $X$.

Let $\Delta^\infty$ be the simplex $P \cap \{ x_1 + \cdots + x_n = \epsilon \}$ for
fixed $\epsilon > 0$ sufficiently small. Let $\cS_\infty$ 
 denote the regular, rational polyhedral subdivision obtained by intersecting $\cS(\nu_\infty)$ with $\Delta^\infty$. That is, the cells of $\cS_\infty$ are
$\{ Q_\infty := \Delta_Q \cap \Delta^\infty \mid Q \subseteq P_\infty \}$.
By Remark~\ref{r:limit} and Remark~\ref{non-zero}, we have the following formulas for the equivariant limit mixed Hodge numbers of $H^{n - 1}_c (X_{\gen})$:
\[
uv \sum_{p,q} \sum_{ \substack{ \alpha \in \Q/\Z \\ \alpha \ne 1} } h^{p,q}( H^{n - 1}_c (X_{\infty}))_{\alpha} \alpha u^p v^q  = \sum_{Q \subseteq P_\infty}  v^{\dim \Delta_Q + 1}\lc(\Delta_Q , \nu_\infty|_{\Delta_Q }; uv^{-1})  
l_{\Delta^\infty}(\cS_\infty,Q_\infty;uv),
\]
\[
uv \sum_{p,q} h^{p,q}( H^{n - 1}_c (X_{\infty}))_{1}  u^p v^q  =  \sum_{Q \subseteq P_\infty}
v^{\dim Q + 1} \lc(Q;uv^{-1}) l_{\Delta^\infty}(\cS_\infty,Q_\infty;uv). 
\]
 An algorithm to compute the equivariant limit mixed Hodge numbers above was given in
\cite[Section~5]{MTMonodromy}.
\begin{remark}\label{r:termsym}
Recall from Remark~\ref{r:non-negative} that $ l_{\Delta^\infty}(\cS_\infty,Q_\infty;t)  = t^{n - 1 - \dim Q}  l_{\Delta^\infty}(\cS_\infty,Q_\infty;t^{-1})$ has non-negative, symmetric, unimodal coefficients.
Also, by Remark~\ref{r:symmetry}, the coefficients of the local weighted $h^*$-polynomial are non-negative, and we have symmetries $\lc(Q;u) = u^{\dim Q + 1} \lc(Q;u^{-1})$
and $\lc(\Delta_Q ,\nu_\infty|_{\Delta_Q };u) = u^{\dim \Delta_Q  + 1} \bar{\lc(\Delta_Q ,\nu_\infty|_{\Delta_Q };u^{-1})}$.
\end{remark}

Recall also from \eqref{e:refined} that we have the following equivalent formula
for the equivariant refined limit Hodge-Deligne polynomial:
\[
uvw^2E(X_\infty, \hat{\mu};u,v,w) =  (uvw^2)^n + (-1)^{n - 1} \sum_{S \subseteq \{ 1, \ldots, n \} } (-1)^{n - |S|} h^*(P^S, \nu_\infty|_{P^S}; u,v,w).
\]
After specializing at $v = w = 1$, one obtains a formula for the \emph{spectrum at infinity} of $f$ \cite{SabMonodromy} that is equivalent to \cite[Theorem~5.11]{MTMonodromy}.
Recall from \eqref{e:char} that specialization at $u = v = w = 1$ yields
a formula for the eigenvalues (with multiplicity) of the action of monodromy:
\[
\sum_{\alpha \in \Q/\Z} \dim  H_{c}^{n - 1}(X_\infty)_\alpha \alpha = \sum_{  \substack{ Q \subseteq P_\infty \\ \dim \sigma(\Delta_Q) = \dim \Delta_Q}} (-1)^{n - 1 - \dim Q} \Vol(Q) \sum_{i = 0}^{m(Q) - 1} [i/m(Q)],
\]
where $\Vol(Q)$ is the normalized volume of $Q$ and $m(Q)$ is the lattice distance of $Q$ from the origin i.e. $m(Q)$ is
the minimal positive integer such that  $m(Q) \nu_\infty|_{\Delta_Q}$ is an affine 
function with respect to the lattice given by intersecting $M$ with the affine span of $\Delta_Q$. When $Q$ is empty,
$\Vol(Q) = m(Q) = 1$ and $\dim Q = -1$.
The above is equivalent  to a formula of  Libgober  and Sperber
\cite{LSZeta} for the \emph{zeta function at infinity} of $f$.

\end{example}


\begin{example}\label{e:simpleagain}
Following on with Example~\ref{e:simple}, Example~\ref{e:simple2} and Example~\ref{e:simple3}, let $f  = \sum_{i  = 1}^n 
x_i^{m_i} \in  \C[x_1,\ldots,x_n]$, for some  
$m_i \in \Z_{>0}$.
Let $f_1, \ldots, f_n$ denote
the standard basis vectors of $M_\R =  \R^n$. Then $P$ is the convex hull of the origin and $\{ m_i f_i \mid 1 \le i \le n \}$.
In this case, $f: \C^n \rightarrow \C$ is a locally trivial fibration away from the origin, and it follows that the actions of monodromy at $0$ and monodromy at infinity
on cohomology are inverse to each other. In both cases, $\nu$ is $\Q$-affine and $\cS(\nu)$ is trivial.
For $(i_1, \ldots i_n) \in [1,m_1 - 1] \times \cdots \times [1,m_n - 1]$, let
\[
\nu_\infty(i_1,\ldots,i_n) := i_1/m_1 + \cdots + i_n/m_n.
\]
The action of monodromy is semi-simple, and, using Example~\ref{e:simplex2}, we compute the equivariant refined limit Hodge-Deligne polynomial $uvw^2E(X_\infty, \hat{\mu};u,v,w)$ for monodromy at infinity:
\[
(uvw^2)^n + (-1)^{n - 1} \sum_{(i_1, \ldots i_n) \in [1,m_1 - 1] \times \cdots \times [1,m_n - 1]} [ \nu_\infty(i_1,\ldots,i_n)] (uw)^{\lceil  \nu_\infty(i_1,\ldots,i_n) \rceil}
(vw)^{\lceil  n - \nu_\infty(i_1,\ldots,i_n) \rceil}.
\]
For the corresponding invariant for monodromy at $0$, one simply reverses the roles of $u$ and $v$.


\end{example}

\begin{example}\label{e:double}
Let $f = a_0 x^4 + a_1 x^5 + a_2 x^4y^2 + a_3 xy^5 + a_4 y^5 + a_5 xy^2 + a_6 x^2y \in \C[x,y]$ for a general choice of  $a_0, \ldots, a_6 \in \C^*$.
Then the pairs $(P, \nu)$ with their corresponding subdivisions $\cS(\nu)$ in Example~\ref{e:0} and Example~\ref{e:infty} are shown below. We have labelled all interior lattice points with non-zero weight.
We may calculate the invariants above using Example~\ref{e:dim2}. In particular, using the notation of Example~\ref{e:dim2}, we have
  $\dim \Gr^W_{0} H^{1}_c (X_{\gen})  = b =  4$. Also, for monodromy at $0$, we have
\[
h^*_{0,0,1}(P,\nu_0) = 2, \; h^*_{0,1,1}(P,\nu_0) = 7 + [1/3].
\]
In this case, the action of monodromy on  $\Gr^W_{1} H^{1}_c (X_{\gen})$ has 16 Jordan blocks of size $1$, $14$ with eigenvalue $1$ and $1$ with
 eigenvalue $\exp(2\pi \sqrt{-1}/3)$ and  $\exp(4\pi \sqrt{-1}/3)$ respectively, as well as $2$ Jordan blocks of size $2$ with eigenvalue $1$.
 For monodromy at infinity, we have
 \[
 h^*_{0,0,1}(P,\nu_\infty) = [1/2], \; h^*_{0,1,1}(P,\nu_\infty) = [7/10] + [9/10] + [1/3] + [1/2]  + 2[2/3] + 3[5/6].
 \]
 In this case, the action of monodromy on  $\Gr^W_{1} H^{1}_c (X_{\gen})$ has 18 Jordan blocks of size $1$, $3$ with eigenvalues $\exp(2\pi \sqrt{-1}/6)$,
 $\exp(2\pi \sqrt{-1}/3)$, $\exp(4\pi \sqrt{-1}/3)$ and  $\exp(10\pi \sqrt{-1}/6)$ respectively, $2$ with eigenvalue $-1$ and $1$ with eigenvalues
  $\exp(\pi \sqrt{-1}/5)$,
 $\exp(3\pi \sqrt{-1}/5)$, $\exp(7\pi \sqrt{-1}/5)$ and  $\exp(9\pi \sqrt{-1}/5)$ respectively, as well as a single Jordan block of size $2$ with eigenvalue $-1$.


 \vspace{1cm}

\setlength{\unitlength}{1cm}
\begin{picture}(12,6.5)


\put(0,0){\circle*{0.1}}
\put(0,1){\circle*{0.1}}
\put(0,2){\circle*{0.1}}
\put(0,3){\circle*{0.1}}
\put(0,4){\circle*{0.1}}
\put(0,5){\circle*{0.1}}
\put(0,6){\circle*{0.1}}
\put(1,0){\circle*{0.1}}
\put(1,1){\circle*{0.1}}
\put(1,2){\circle*{0.1}}
\put(1,3){\circle*{0.1}}
\put(1,4){\circle*{0.1}}
\put(1,5){\circle*{0.1}}
\put(1,6){\circle*{0.1}}
\put(2,0){\circle*{0.1}}
\put(2,1){\circle*{0.1}}
\put(2,2){\circle*{0.1}}
\put(2,3){\circle*{0.1}}
\put(2,4){\circle*{0.1}}
\put(2,5){\circle*{0.1}}
\put(2,6){\circle*{0.1}}
\put(3,0){\circle*{0.1}}
\put(3,1){\circle*{0.1}}
\put(3,2){\circle*{0.1}}
\put(3,3){\circle*{0.1}}
\put(3,4){\circle*{0.1}}
\put(3,5){\circle*{0.1}}
\put(3,6){\circle*{0.1}}
\put(4,0){\circle*{0.1}}
\put(4,1){\circle*{0.1}}
\put(4,2){\circle*{0.1}}
\put(4,3){\circle*{0.1}}
\put(4,4){\circle*{0.1}}
\put(4,5){\circle*{0.1}}
\put(4,6){\circle*{0.1}}
\put(5,0){\circle*{0.1}}
\put(5,1){\circle*{0.1}}
\put(5,2){\circle*{0.1}}
\put(5,3){\circle*{0.1}}
\put(5,4){\circle*{0.1}}
\put(5,5){\circle*{0.1}}
\put(5,6){\circle*{0.1}}
\put(6,0){\circle*{0.1}}
\put(6,1){\circle*{0.1}}
\put(6,2){\circle*{0.1}}
\put(6,3){\circle*{0.1}}
\put(6,4){\circle*{0.1}}
\put(6,5){\circle*{0.1}}
\put(6,6){\circle*{0.1}}

\put(8,0){\circle*{0.1}}
\put(8,1){\circle*{0.1}}
\put(8,2){\circle*{0.1}}
\put(8,3){\circle*{0.1}}
\put(8,4){\circle*{0.1}}
\put(8,5){\circle*{0.1}}
\put(8,6){\circle*{0.1}}
\put(9,0){\circle*{0.1}}
\put(9,1){\circle*{0.1}}
\put(9,2){\circle*{0.1}}
\put(9,3){\circle*{0.1}}
\put(9,4){\circle*{0.1}}
\put(9,5){\circle*{0.1}}
\put(9,6){\circle*{0.1}}
\put(10,0){\circle*{0.1}}
\put(10,1){\circle*{0.1}}
\put(10,2){\circle*{0.1}}
\put(10,3){\circle*{0.1}}
\put(10,4){\circle*{0.1}}
\put(10,5){\circle*{0.1}}
\put(10,6){\circle*{0.1}}
\put(11,0){\circle*{0.1}}
\put(11,1){\circle*{0.1}}
\put(11,2){\circle*{0.1}}
\put(11,3){\circle*{0.1}}
\put(11,4){\circle*{0.1}}
\put(11,5){\circle*{0.1}}
\put(11,6){\circle*{0.1}}
\put(12,0){\circle*{0.1}}
\put(12,1){\circle*{0.1}}
\put(12,2){\circle*{0.1}}
\put(12,3){\circle*{0.1}}
\put(12,4){\circle*{0.1}}
\put(12,5){\circle*{0.1}}
\put(12,6){\circle*{0.1}}
\put(13,0){\circle*{0.1}}
\put(13,1){\circle*{0.1}}
\put(13,2){\circle*{0.1}}
\put(13,3){\circle*{0.1}}
\put(13,4){\circle*{0.1}}
\put(13,5){\circle*{0.1}}
\put(13,6){\circle*{0.1}}
\put(14,0){\circle*{0.1}}
\put(14,1){\circle*{0.1}}
\put(14,2){\circle*{0.1}}
\put(14,3){\circle*{0.1}}
\put(14,4){\circle*{0.1}}
\put(14,5){\circle*{0.1}}
\put(14,6){\circle*{0.1}}

\put(0.5,0.5){$[1/3]$}
\put(8.5,0.5){$[1/3]$}
\put(9.5,0.5){$[1/2]$}
\put(8.5,1.5){$[1/2]$}
\put(8.5,2.5){$[2/3]$}
\put(9.5,1.5){$[2/3]$}
\put(8.5,3.5){$[5/6]$}
\put(9.5,2.5){$[5/6]$}
\put(10.5,1.5){$[5/6]$}
\put(10.5,0.5){$[7/10]$}
\put(11.6,0.5){$[9/10]$}


\put(3.3,3.2){$P$}
\put(11.3,3.2){$P$}

\put(1.5,7){Monodromy at $0$}
\put(9.5,7){Monodromy at $\infty$}

\linethickness{0.075mm}
\put(0,5){\line(0,1){1}}
\put(5,0){\line(1,0){1}}
\put(1,2){\line(-1,3){1}}
\put(1,2){\line(1,-1){1}}
\put(2,1){\line(2,-1){2}}
\put(0,0){\line(2,1){2}}
\put(0,0){\line(1,2){1}}

\put(8,5){\line(0,1){1}}
\put(13,0){\line(1,0){1}}
\put(8,0){\line(1,5){1}}
\put(8,0){\line(2,1){4}}


\linethickness{0.2mm}
\put(0,0){\line(0,1){5}}
\put(0,0){\line(1,0){5}}
\put(5,0){\line(-1,2){1}}
\put(4,2){\line(-1,1){3}}
\put(0,5){\line(1,0){1}}

\put(8,0){\line(0,1){5}}
\put(8,0){\line(1,0){5}}
\put(13,0){\line(-1,2){1}}
\put(12,2){\line(-1,1){3}}
\put(8,5){\line(1,0){1}}


\end{picture}

\end{example}

\subsection{Jordan block structure of
monodromy at infinity}

We continue with the notation of Example~\ref{e:infty} above, and let  $f \in \C[x_1, \ldots, x_n]$ be convenient and sch\"on at infinity, and let $X = \{ ft = \lambda \} \subseteq \K^n$ for a general choice of $\lambda \in \C^*$.
Recall that
for every face $Q$ of the Newton polytope $\NP(f)$, we let
$\Delta_Q$ denote the convex hull of $Q$ and the origin. Recall that $P = \Delta_{\NP(f)}$, and $P_\infty$ denotes the union of the faces at infinity i.e. the faces of $P$ not containing the origin.
Recall that
  $\nu_\infty$ is the piecewise $\Q$-affine function on $P$ with value $0$ at the origin and value identically $1$ on the faces at infinity of $P$, and
the cells of $\cS(\nu_\infty)$ are given by the union of  $\{ Q \mid Q \subseteq P_\infty \}$  and $\{ \Delta_Q \mid Q \subseteq P_\infty \}$.
Recall that
$\Delta^\infty$ is the simplex $P \cap \{ x_1 + \cdots + x_n = \epsilon \}$ for
fixed $\epsilon > 0$ sufficiently small, and  $\cS_\infty$ denotes the regular, rational polyhedral subdivision of  $\Delta^\infty$ obtained by intersecting $\cS(\nu_\infty)$ with $\Delta^\infty$.

Following \cite{MTMonodromy}, one may use results of Broughton and Sabbah to read off the Jordan normal form of the action of monodromy on $H^{n - 1}_c (X_{\gen})$
from the equivariant limit Hodge-Deligne polynomial. We outline the argument below.
By Poincar\'{e} duality, we may work with usual cohomology $H^{n - 1} (X_{\gen})$ rather than cohomology with compact supports. We write $H^{n - 1} (X_{\gen}) = H^{n - 1} (X_{\gen})_{= 1} \oplus H^{n - 1} (X_{\gen})_{\ne 1}$, where $H^{n - 1} (X_{\gen})_{= 1}$ denotes the $1$-eigenspace of $H^{n - 1} (X_{\gen})$, and $H^{n - 1} (X_{\gen})_{\ne 1}$ denotes the sum of the $\alpha$-eigenspaces, for $\alpha \ne 1$.
The induced monodromy weight filtration $M_\bullet$ on $H^{n - 1} (X_{\gen})$
may be written
as $M_\bullet = (M_{=1})_\bullet \oplus (M_{\ne 1})_\bullet$, where $(M_{=1})_\bullet$ and  $(M_{\ne 1})_\bullet$ are the induced filtrations on $H^{n - 1} (X_{\gen})_{= 1}$ and $H^{n - 1} (X_{\gen})_{\ne 1}$
respectively. Let $N = \log T_u$ denote the nilpotent operator where $T_u$ is the unipotent part of monodromy acting on $H^{n - 1} (X_{\gen})$, and consider the induced action of $N$  on
$H^{n - 1} (X_{\gen})_{= 1}$ and $H^{n - 1} (X_{\gen})_{\ne 1}$.
As explained in detail in \cite{MTMonodromy}, using results of Broughton \cite{BroMilnor},
we have the following special case of a deep result of Sabbah. Recall the definition of the weight filtration of a nilpotent operator from Definition~\ref{d:weight}.

\begin{theorem} \cite[Theorem~13.1]{SabHyper} \cite[Proposition~A.1]{MTMonodromy} \label{t:sabbah}
Let  $f \in \C[x_1, \ldots, x_n]$ be convenient and sch\"on at infinity, and let $X = \{ ft = \lambda \} \subseteq \K^n$.
Then $(M_{=1})_\bullet$ (respectively   $(M_{\ne 1})_\bullet$) is equal to the $N$-weight filtration centered at $n$ (respectively $n - 1$).
\end{theorem}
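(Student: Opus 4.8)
The plan is to deduce Theorem~\ref{t:sabbah} as essentially a black-box citation, but I should think about what the genuine content would be if I were proving it from the ingredients available in this excerpt. The statement asserts that the monodromy weight filtration $M_\bullet$ on $H^{n-1}(X_{\gen})$ splits according to the eigenvalue decomposition $H^{n-1}(X_{\gen})_{=1} \oplus H^{n-1}(X_{\gen})_{\ne 1}$, and that on each summand it agrees with the abstract $N$-weight filtration (Definition~\ref{d:weight}) centered at $n$ and $n-1$ respectively. The splitting of $M_\bullet$ along eigenvalues of $T_s$ is automatic since $T_s$ preserves $M_\bullet$ (stated in Section~\ref{s:invariants}), so the real content is the \emph{centering}: one must show that on the unipotent part the symmetry center of $M_\bullet$ is $n$, while on the non-unipotent part it is $n-1$.

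First I would recall the general geometric property of the monodromy weight filtration stated in Section~\ref{s:invariants}: for each $r$, the induced filtration $M(r)_\bullet$ on $\Gr^W_r H^{n-1}_c(X_\infty)$ coincides with the $N(r)$-weight filtration centered at $r$. The point is to upgrade this graded statement to an ungraded one, which requires knowing that $H^{n-1}_c(X_{\gen})$ is concentrated in just one or two weights $r$, and that those weights are exactly $n-1$ (for the $\ne 1$ part) and $n$ (for the $=1$ part). This is where Broughton's results enter: Broughton \cite{BroMilnor} shows that for $f$ convenient and non-degenerate (tame), the generic fiber $X_{\gen}$ has the homotopy type of a bouquet of $(n-1)$-spheres, so $H^{n-1}(X_{\gen})$ carries a pure-ish structure, and more precisely Sabbah's theory \cite{SabHyper} identifies the weight filtration on it. Concretely, using Corollary~\ref{c:affine} and Example~\ref{e:infty} — which show that the action of monodromy is trivial on $\Gr^W_r H^{n-1}_c(X_{\gen})$ for $r \ne n-1$ and that the interesting (non-semisimple, non-unipotent) data all sits in weight $n-1$ via $\lc(P,\nu_\infty;u,v)$ — one sees that the non-unipotent monodromy lives entirely in $\Gr^W_{n-1}$, forcing $(M_{\ne 1})_\bullet$ to be centered at $n-1$. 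Meanwhile, the factor $F(uw,vw)$ in the formula contributes the unipotent-but-nontrivial part, whose $w$-degree (i.e. $M$-weight) is shifted by one relative to its $W$-weight, placing the center of $(M_{=1})_\bullet$ at $n$.

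So the key steps, in order, would be: (1) invoke Broughton's bouquet theorem to reduce to controlling $H^{n-1}(X_{\gen})$ and to know $X_{\gen}$ is $(n-1)$-connected up to the relevant degree; (2) use the splitting $M_\bullet = (M_{=1})_\bullet \oplus (M_{\ne 1})_\bullet$ coming from $T_s$-invariance; (3) on the $\ne 1$ summand, combine Corollary~\ref{c:affine}/Example~\ref{e:infty} with the graded property of $M_\bullet$ to conclude the non-unipotent part is concentrated in $W$-weight $n-1$, hence $(M_{\ne 1})_\bullet$ is the $N$-weight filtration centered at $n-1$; (4) on the $=1$ summand, track the $w$-grading of the $F(uw,vw)$ term (whose combined $u,v$-degree equals its $w$-degree, per the discussion after Example~\ref{e:G}) together with the $\lc$ term, to see the symmetry center shifts up by one to $n$; (5) appeal to \cite[Theorem~13.1]{SabHyper} and \cite[Proposition~A.1]{MTMonodromy} for the precise Hodge-theoretic statement that pins the filtration down uniquely.

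The main obstacle is step (4): making rigorous the claim that the unipotent-but-nontrivial monodromy on the $1$-eigenspace is "centered at $n$" rather than $n-1$. This is not a formal consequence of the graded property of $M_\bullet$ alone — it genuinely requires Sabbah's weight computation for tame polynomials, which distinguishes the behavior at infinity (giving an extra weight shift on the unipotent part) from the local behavior. In other words, the honest proof must cite \cite[Theorem~13.1]{SabHyper}; the contribution of the present paper is only to \emph{match} the combinatorial formulas of Section~\ref{s:hyper} against this filtration so that one can read off Jordan blocks, and I would present the proof of Theorem~\ref{t:sabbah} as a direct citation, deferring the combinatorial exploitation to the corollaries that follow.
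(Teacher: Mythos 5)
The paper offers no proof of this theorem: after setting up the eigenvalue decomposition and the splitting $M_\bullet = (M_{=1})_\bullet \oplus (M_{\ne 1})_\bullet$, it immediately cites \cite[Theorem~13.1]{SabHyper} and \cite[Proposition~A.1]{MTMonodromy}, which is exactly the conclusion you arrive at. Your preliminary reconstruction (steps (1)--(5)) is not part of the paper's exposition and is not needed; in particular, while your observation that $(M_{\ne 1})_\bullet$ can be read off from Corollary~\ref{c:affine} and the graded property of the relative monodromy filtration (because the $\ne 1$ eigenspace is pure of $W$-weight $n-1$) is in the right spirit, the identification of the centering on the $=1$ summand is precisely the non-formal Hodge-theoretic input from Sabbah's theory of tame polynomials, and the paper (correctly, as you conclude) treats the whole statement as a black-box citation rather than attempt to rederive it from the combinatorial formulas.
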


Using Poincar\'{e} duality together with the symmetries of
Remark~\ref{r:termsym},  the corollary below follows immediately from Example~\ref{e:infty}. 

\begin{corollary}\label{c:formula}
 Let  $f \in \C[x_1, \ldots, x_n]$ be convenient and sch\"on at infinity, and let $X = \{ ft = \lambda \} \subseteq \K^n$.
 Then we have following formulas for  the equivariant limit mixed Hodge numbers of $H^{n - 1} (X_{\gen})$:
\begin{equation*}\label{e:nonzero}
uv \sum_{p,q} \sum_{ \substack{ \alpha \in \Q/\Z \\ \alpha \ne 1} } h^{p,q}( H^{n - 1} (X_{\infty}))_{\alpha} \alpha u^p v^q  = \sum_{Q \subseteq P_\infty}  v^{\dim \Delta_Q + 1}\lc(\Delta_Q , \nu_\infty|_{\Delta_Q }; uv^{-1})   l_{\Delta^\infty}(\cS_\infty,Q_\infty;uv),
\end{equation*}
\begin{equation*}\label{e:zero}
\sum_{p,q} h^{p,q}( H^{n - 1} (X_{\infty}))_{1}  u^p v^q  =  \sum_{Q \subseteq P_\infty}
v^{\dim Q + 1} \lc(Q;uv^{-1})  l_{\Delta^\infty}(\cS_\infty,Q_\infty;uv),
\end{equation*}
where the sum runs over all (possibly empty) faces at infinity.
\end{corollary}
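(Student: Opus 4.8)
The strategy is to bootstrap the corollary out of the computation of $H^{n-1}_c(X_{\gen})$ already carried out in Example~\ref{e:infty}, using Poincar\'e duality to pass from cohomology with compact supports to ordinary cohomology. Since $X_{\gen}\subseteq\C^n$ is a smooth affine hypersurface of dimension $n-1$, Poincar\'e duality furnishes a perfect pairing $H^{n-1}_c(X_{\gen})\times H^{n-1}(X_{\gen})\to\C(-(n-1))$ that is compatible with the monodromy operator $T=T_sT_u$ and, after passing to the limit, with the triple of filtrations $(F^\bullet,W_\bullet,M_\bullet)$ on the limit mixed Hodge structure. Tracking how a graded piece transforms under the pairing --- the Tate twist shifts Hodge type by $(n-1,n-1)$ and Deligne weight by $2(n-1)$, dualization reverses all three gradings, the monodromy weight filtration is self-dual, and the $T_s$-eigenspaces for $\alpha$ and $\alpha^{-1}$ are interchanged --- yields $h^{p,q,r}(H^{n-1}_c(X_\infty))_\alpha=h^{n-1-p,n-1-q,2(n-1)-r}(H^{n-1}(X_\infty))_{\alpha^{-1}}$, and hence, after summing over the Deligne weight,
\[
h^{p,q}(H^{n-1}_c(X_\infty))_\alpha=h^{n-1-p,n-1-q}(H^{n-1}(X_\infty))_{\alpha^{-1}}.
\]

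Write $R(u,v)$ and $R_1(u,v)$ for the right-hand sides in the corollary (for $\alpha\neq1$ and $\alpha=1$ respectively), and set $S(u,v)=\sum_{p,q}\sum_{\alpha\neq1}h^{p,q}(H^{n-1}(X_\infty))_\alpha\,\alpha\,u^pv^q$ and $S_1(u,v)=\sum_{p,q}h^{p,q}(H^{n-1}(X_\infty))_1u^pv^q$, so that the corollary asserts $uvS=R$ and $S_1=R_1$. Substituting the duality relation above into the identities $uv\sum_{p,q}\sum_{\alpha\neq1}h^{p,q}(H^{n-1}_c(X_\infty))_\alpha\alpha u^pv^q=R$ and $uv\sum_{p,q}h^{p,q}(H^{n-1}_c(X_\infty))_1u^pv^q=R_1$ of Example~\ref{e:infty}, and reindexing $p\mapsto n-1-p$, $q\mapsto n-1-q$, $\alpha\mapsto\alpha^{-1}$, one gets $R(u,v)=u^nv^n\bar{S(u^{-1},v^{-1})}$ and $R_1(u,v)=u^nv^nS_1(u^{-1},v^{-1})$. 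It therefore suffices to prove that $R$ and $R_1$ themselves satisfy the reciprocity symmetries dictated by Poincar\'e duality, namely $R(u,v)=(uv)^{n+1}\bar{R(u^{-1},v^{-1})}$ and $R_1(u,v)=(uv)^nR_1(u^{-1},v^{-1})$; combining each of these with the preceding relation gives $uvS=R$ and $S_1=R_1$. Both symmetries follow by a term-by-term computation from the three identities recorded in Remark~\ref{r:termsym} --- $l_{\Delta^\infty}(\cS_\infty,Q_\infty;t)=t^{n-1-\dim Q}l_{\Delta^\infty}(\cS_\infty,Q_\infty;t^{-1})$, $\lc(Q;u)=u^{\dim Q+1}\lc(Q;u^{-1})$, and $\lc(\Delta_Q,\nu_\infty|_{\Delta_Q};u)=u^{\dim\Delta_Q+1}\bar{\lc(\Delta_Q,\nu_\infty|_{\Delta_Q};u^{-1})}$ --- applied to the summand indexed by a face $Q$ at infinity, after substituting $u\mapsto u^{-1}$, $v\mapsto v^{-1}$, conjugating, and restoring powers of $uv$; the powers collect correctly because $\dim Q_\infty=\dim Q$ and $\dim\Delta_Q=\dim Q+1$ for every face $Q$ at infinity (since $0\notin\aff(Q)$), so that each summand is returned to its original form.

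I expect the one point requiring genuine care, rather than bookkeeping, to be the first step: the assertion that Poincar\'e duality for $X_{\gen}$ upgrades to a duality of limit mixed Hodge structures compatible with the monodromy weight filtration $M_\bullet$ and the monodromy action, so that the weight index is reflected exactly as $r\mapsto 2(n-1)-r$ and eigenvalues are inverted. This is standard in the theory of limit mixed Hodge structures and is already used implicitly in Section~\ref{s:invariants} and the references cited there, so once it is invoked the remainder is the elementary exponent-chasing described above. It is worth noting explicitly that the extra factor of $uv$ on the left of the $\alpha=1$ formula in Example~\ref{e:infty}, which is absent here, is precisely accounted for by the Tate twist $(-(n-1))$ in the duality pairing.
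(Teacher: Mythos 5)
Your proposal is correct and follows essentially the same route as the paper, which disposes of the corollary in a single line (``Using Poincar\'e duality together with the symmetries of Remark~\ref{r:termsym}, the corollary below follows immediately from Example~\ref{e:infty}''); you have simply expanded that sentence into the explicit functional-equation bookkeeping, and your exponent-chasing (including the key use of $\dim\Delta_Q=\dim Q+1$ for a face at infinity) checks out.
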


Specializing the  formulas in Corollary~\ref{c:formula} by setting $u = v$ yields:
\[
u^2 \sum_{p,q} \sum_{ \substack{ \alpha \in \Q/\Z \\ \alpha \ne 1} } h^{p,q}( H^{n - 1} (X_{\infty}))_{\alpha} \alpha u^{p + q}  = \sum_{Q \subseteq P_\infty}  u^{\dim \Delta_Q + 1}\lc(\Delta_Q , \nu_\infty|_{\Delta_Q }; 1)   l_{\Delta^\infty}(\cS_\infty,Q_\infty;u^2),
\]
\[
\sum_{p,q} h^{p,q}( H^{n - 1} (X_{\infty}))_{1}  u^{p + q}  =  \sum_{Q \subseteq P_\infty}
u^{\dim Q + 1} \lc(Q;1) l_{\Delta^\infty}(\cS_\infty,Q_\infty;u^2).
\]
The following result now follows directly from Theorem~\ref{t:sabbah}. For every face at infinity $Q$ of $P$, it follows from Remark~\ref{r:non-negative} that we may write
\[ l_{\Delta^\infty}(\cS_\infty,Q_\infty;t) = \sum_{i = 0}^{\lfloor \frac{n - 1 - \dim Q }{2}\rfloor} \tilde{l}_{Q,i} t^i(1 + t + \cdots + t^{n - 1 - \dim Q - 2i}), \]
for some non-negative integers   $\{ \tilde{l}_{Q,i} \mid 0 \le i \le \lfloor \frac{n - 1 - \dim Q }{2}\rfloor \}$. We define
\[
\tilde{l}_{\Delta^\infty}(\cS_\infty,Q_\infty;t) 
:= \sum_{i = 0}^{\lfloor \frac{n - 1 - \dim Q }{2}\rfloor} \tilde{l}_{Q,i} t^i.
\]

\begin{corollary}\label{c:jordaninfty}
Let  $f \in \C[x_1, \ldots, x_n]$ be convenient and sch\"on at infinity, and let $J^\infty_{k,\alpha}$ be the number of Jordan blocks of size $k$ with eigenvalue $\alpha$ for the action of monodromy at infinity on
 $H^{n - 1} (f^{-1}(t))$ for $t$ fixed and sufficiently large. Then
 \[
\sum_{ \substack{ \alpha \in \Q/\Z \\ \alpha \ne 1} } \sum_k J^\infty_{n - k,\alpha} \alpha u^{k + 2} =  \sum_{Q \subseteq P_\infty} u^{\dim \Delta_Q + 1}\lc(\Delta_Q, \nu_\infty|_{\Delta_Q}; 1)
\tilde{l}_{\Delta^\infty}(\cS_\infty,Q_\infty;u^2), 
\]
\[
\sum_k J^\infty_{n - 1 - k,1} u^{k + 2} =   \sum_{Q \subseteq P_\infty} u^{\dim Q + 1}\lc(Q; 1)
\tilde{l}_{\Delta^\infty}(\cS_\infty,Q_\infty;u^2), 
\]
where the sum runs over all (possibly empty) faces at infinity.
\end{corollary}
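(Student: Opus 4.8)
The plan is to read the Jordan block numbers off the equivariant limit mixed Hodge numbers supplied by Corollary~\ref{c:formula}, using Theorem~\ref{t:sabbah} to pin down the monodromy weight filtration and Definition~\ref{d:weight} to convert graded dimensions into block multiplicities. By Poincar\'e duality one works with $H^{n-1}(X_\infty)$ and its splitting $H^{n-1}(X_\infty)_{=1}\oplus H^{n-1}(X_\infty)_{\ne1}$; by Theorem~\ref{t:sabbah} the monodromy weight filtration $(M_{=1})_\bullet$ is the $N$-weight filtration centered at $n$ and $(M_{\ne1})_\bullet$ is the $N$-weight filtration centered at $n-1$, so in particular $N^{n+1}=0$ on the first summand and $N^{n}=0$ on the second.

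First I would treat the eigenvalues $\alpha\ne1$. Writing $d^\alpha_j:=\dim\Gr_j^M H^{n-1}(X_\infty)_\alpha=\sum_{p+q=j}h^{p,q}(H^{n-1}(X_\infty))_\alpha$, Definition~\ref{d:weight} with $r=n-1$ gives that for $1\le m\le n$ the number of size-$m$ Jordan blocks with eigenvalue $\alpha$ is $d^\alpha_{n-m}-d^\alpha_{n-2-m}$, and there are no larger blocks. Reindexing so that the size-$(n+2-j)$ block contributes to $u^j$, the left-hand side $\sum_{\alpha\ne1}\sum_k J^\infty_{n-k,\alpha}\alpha u^{k+2}$ is therefore the polynomial of $u$-degree $\le n+1$ whose coefficient of $\alpha u^j$ equals $d^\alpha_{j-2}-d^\alpha_{j-4}$ for $2\le j\le n+1$; equivalently, it is obtained from $(1-u^2)\sum_{\alpha\ne1}\sum_j d^\alpha_j\alpha u^{j+2}$ by discarding the monomials of $u$-degree $>n+1$. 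Now I substitute the $u=v$ specialization of the first formula of Corollary~\ref{c:formula}, namely
\[
\sum_{\alpha\ne1}\sum_j d^\alpha_j\,\alpha\,u^{j+2}=\sum_{Q\subseteq P_\infty}u^{\dim\Delta_Q+1}\lc(\Delta_Q,\nu_\infty|_{\Delta_Q};1)\,l_{\Delta^\infty}(\cS_\infty,Q_\infty;u^2),
\]
multiply by $1-u^2$, and use $1+t+\cdots+t^{m}=(1-t^{m+1})/(1-t)$ in the definition of $\tilde{l}_{\Delta^\infty}(\cS_\infty,Q_\infty;t)$ to obtain, for each face $Q$ at infinity,
\[
(1-u^2)\,l_{\Delta^\infty}(\cS_\infty,Q_\infty;u^2)=\tilde{l}_{\Delta^\infty}(\cS_\infty,Q_\infty;u^2)-\sum_{i}\tilde{l}_{Q,i}\,u^{2(n-\dim Q-i)}.
\]
Because $\tilde{l}_{Q,i}=0$ for $i>\lfloor(n-1-\dim Q)/2\rfloor$, after multiplying through by $u^{\dim\Delta_Q+1}=u^{\dim Q+2}$ the first term has $u$-degree $\le n+1$ while every monomial of the second term has $u$-degree $\ge n+3$; discarding the monomials of degree $>n+1$ thus leaves precisely $\sum_{Q\subseteq P_\infty}u^{\dim\Delta_Q+1}\lc(\Delta_Q,\nu_\infty|_{\Delta_Q};1)\tilde{l}_{\Delta^\infty}(\cS_\infty,Q_\infty;u^2)$, which is the first claimed identity.

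The case $\alpha=1$ is carried out the same way, with the centering shifted by one. With $d^1_j:=\dim\Gr_j^M H^{n-1}(X_\infty)_1=\sum_{p+q=j}h^{p,q}(H^{n-1}(X_\infty))_1$, Definition~\ref{d:weight} with $r=n$ gives that for $1\le m\le n+1$ the number of size-$m$ blocks with eigenvalue $1$ is $d^1_{n+1-m}-d^1_{n-1-m}$, so $\sum_k J^\infty_{n-1-k,1}u^{k+2}$ has $u$-degree $\le n$ and is obtained from $(1-u^2)\sum_j d^1_j u^j$ by discarding monomials of degree $>n$. Substituting the $u=v$ specialization of the second formula of Corollary~\ref{c:formula}, $\sum_j d^1_j u^j=\sum_{Q\subseteq P_\infty}u^{\dim Q+1}\lc(Q;1)l_{\Delta^\infty}(\cS_\infty,Q_\infty;u^2)$, and repeating the computation above (the correction monomials now having $u$-degree $\ge n+2$) yields the second claimed identity.

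The step I expect to be the main obstacle is the degree bookkeeping at the end of each case: one must verify that the correction monomials appearing when $(1-t)l_{\Delta^\infty}(\cS_\infty,Q_\infty;t)$ is compared with $\tilde{l}_{\Delta^\infty}(\cS_\infty,Q_\infty;t)$ lie strictly above the top degree ($n+1$ for $\alpha\ne1$, $n$ for $\alpha=1$) supported by the Jordan-block generating function, so that passing from the weighted invariants to their ``$\tilde{\;}$'' versions is exact rather than merely an approximation, and that the reindexing between ``block of size $m$'' and the difference $d_{\ast}-d_{\ast-2}$ uses the correct centering in the two cases. Everything else is a direct substitution from Theorem~\ref{t:sabbah}, Corollary~\ref{c:formula}, Definition~\ref{d:weight}, and the definitions of $l_{\Delta^\infty}$ and $\tilde{l}_{\Delta^\infty}$.
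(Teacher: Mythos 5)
Your proof is correct and follows the same route as the paper: specialize Corollary~\ref{c:formula} at $u=v$, apply Theorem~\ref{t:sabbah} and Definition~\ref{d:weight} to translate graded $M$-dimensions into Jordan block counts with centering $n-1$ (resp.\ $n$) for $\alpha\ne1$ (resp.\ $\alpha=1$), and observe that multiplying by $1-u^2$ and truncating above degree $n+1$ (resp.\ $n$) kills exactly the ``upper half'' monomials coming from the symmetric unimodal decomposition $l_{\Delta^\infty}(\cS_\infty,Q_\infty;t)=\sum_i\tilde{l}_{Q,i}t^i(1+\cdots+t^{n-1-\dim Q-2i})$, leaving $\tilde{l}_{\Delta^\infty}$. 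The paper compresses all of this into ``follows directly from Theorem~\ref{t:sabbah}''; your version just spells out the degree bookkeeping, which checks out.
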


\begin{example}\label{e:firstterms}
Let $\partial \R^n_{\ge 0}$ denote the intersection of $\R^n_{\ge 0}$ with the union of the coordinate hyperplances.
Using either Example~\ref{e:relative} and Example~\ref{e:weightedlocal}, or Example~\ref{e:smallterms} and Example~\ref{e:G}, one computes the number of Jordan blocks with  the largest and second largest possible size:
\[
\sum_{ \substack{ \alpha \in \Q/\Z \\ \alpha \ne 1} } J^\infty_{n,\alpha} \alpha = h^*_{0,0,n - 1}(P,\nu_\infty) =  \sum_{\substack{Q \subseteq P_\infty, Q \nsubseteq \partial \R^n_{\ge 0}
\\ \dim Q = 0}}  \sum_{v \in \Int(\Delta_Q) \cap \Z^n} w(v),
\]
\[
\sum_{ \substack{ \alpha \in \Q/\Z \\ \alpha \ne 1} } J^\infty_{n - 1,\alpha} \alpha = h^*_{0,1,n - 1}(P,\nu_\infty) + h^*_{1,0,n - 1}(P,\nu_\infty) =  \sum_{\substack{Q \subseteq P_\infty, Q \nsubseteq \partial \R^n_{\ge 0}
\\ \dim Q = 1}}  \sum_{v \in \Int(\Delta_Q) \cap \Z^n} w(v) + \bar{w(v)},
\]
\[
J^\infty_{n - 1,1} = \sum_{ \substack{ Q \subseteq P_\infty, Q \nsubseteq \partial \R^n_{\ge 0}\\ \dim Q \le 1}  }   \#( \Int(Q) \cap \Z^n),
\]
\[
J^\infty_{n - 2,1} = 2 \cdot  \sum_{ \substack{ Q \subseteq P_\infty, Q \nsubseteq \partial \R^n_{\ge 0}\\ \dim Q = 2}  }   \#( \Int(Q) \cap \Z^n).
\]
This reproves the main results of \cite{MTMonodromy}. Namely,  the first two equations are equivalent to \cite[Theorem~1.1]{MTMonodromy}, and the second two equations are equivalent to  \cite[Theorem~1.2]{MTMonodromy}.
\end{example}

\begin{example}\label{e:infty2}
As in Example~\ref{e:dim2}, consider the case when $n = 2$. Then $J^\infty_{1,1} = \#(\partial P \cap \Z^{2}_{> 0})$, and
\[
\sum_{ \substack{ \alpha \in \Q/\Z \\ \alpha \ne 1} } J^\infty_{2,\alpha} \alpha =  \sum_{\substack{Q \subseteq P_\infty, Q \nsubseteq \partial \R^2_{\ge 0}
\\ \dim Q = 0}}  \sum_{v \in \Int(\Delta_Q) \cap \Z^2} w(v),
\]
\[
\sum_{ \substack{ \alpha \in \Q/\Z \\ \alpha \ne 1} } J^\infty_{1,\alpha} \alpha =  \sum_{\substack{Q \subseteq P_\infty
\\ \dim Q = 1}}  \sum_{v \in \Int(\Delta_Q) \cap \Z^2} w(v) + \bar{w(v)}.
\]
\end{example}

\begin{example}\label{e:simpleagain2}
Following on with Example~\ref{e:simpleagain}, if  $f  = \sum_{i  = 1}^n 
x_i^{m_i} \in  \C[x_1,\ldots,x_n]$, for some  
$m_i \in \Z_{>0}$, then the action of monodromy is semi-simple, and all Jordan blocks have size $1$. With the notation of Example~\ref{e:simpleagain},
\[
\sum_{ \alpha \in \Q/\Z } J^\infty_{1,\alpha} \alpha  = \sum_{(i_1, \ldots i_n) \in [1,m_1 - 1] \times \cdots \times [1,m_n - 1]} [ \nu_\infty(i_1,\ldots,i_n)] \in \Z[\Q/\Z].
\]
\end{example}

\begin{example}\label{e:running}
As in Example~\ref{e:double}, let $f = a_0 x^4 + a_1 x^5 + a_2 x^4y^2 + a_3 xy^5 + a_4 y^5 + a_5 xy^2 + a_6 x^2y \in \C[x,y]$ for a general choice of  $a_0, \ldots, a_6 \in \C^*$.
The action of monodromy on  $H^{1} (X_{\gen})$ has 22 Jordan blocks of size $1$, $4$ with eigenvalue $1$, $3$ with eigenvalues $\exp(2\pi \sqrt{-1}/6)$,
 $\exp(2\pi \sqrt{-1}/3)$, $\exp(4\pi \sqrt{-1}/3)$ and  $\exp(10\pi \sqrt{-1}/6)$ respectively, $2$ with eigenvalue $-1$ and $1$ with eigenvalues
  $\exp(\pi \sqrt{-1}/5)$,
 $\exp(3\pi \sqrt{-1}/5)$, $\exp(7\pi \sqrt{-1}/5)$ and  $\exp(9\pi \sqrt{-1}/5)$ respectively, as well as a single Jordan block of size $2$ with eigenvalue $-1$.
\end{example}

\subsection{
Monodromy of Milnor fibers}\label{s:Milnor}

Finally, we present some local versions of the formulas above (see Example~\ref{e:0}).

Consider a polynomial  $f \in \C[x_1,\ldots, x_n]$  and consider the polynomial map $f: \C^n \rightarrow \C$. Assume
that $f^{-1}(0) \subseteq \C^n$ has an isolated singularity at the origin.
Restricting $f : \C^n \rightarrow \C$
to a small ball about the origin in $\C^n$, and replacing $\C$ with a sufficiently small punctured disc about the origin, gives a locally trivial fibration called the \define{Milnor fibration}. Fix a fiber $F_0$, called the
\define{Milnor fiber}.
A fundamental result of Milnor  asserts that $F_0$ has the homotopy type of a wedge of $(n - 1)$-spheres \cite{Milnor}. In particular, $H^m(F_0) = 0$ unless $m = 0, n - 1$.  Here the monodromy action on
 $H^0(F_0) = \C$ is trivial. In \cite{SteMixed}, Steenbrink introduced a
mixed Hodge structure $(F^\bullet, M_\bullet)$ on  $H^{n - 1}(F_0)$.   The weight filtration has the following description in terms of the induced monodromy map $T = T_sT_u$ on
$H^{n - 1}(F_0)$ (cf. Theorem~\ref{t:sabbah}). We write
$H^{n - 1} (F_0) = H^{n - 1} (F_0)_{= 1} \oplus H^{n - 1} (F_0)_{\ne 1}$, where $H^{n - 1} (F_0)_{= 1}$ denotes the $1$-eigenspace of $H^{n - 1} (F_0)$, and $H^{n - 1} (F_0)_{\ne 1}$ denotes the sum of the $\alpha$-eigenspaces, for $\alpha \ne 1$.
Then 
 $M_\bullet = (M_{=1})_\bullet \oplus (M_{\ne 1})_\bullet$, where $(M_{=1})_\bullet$ and  $(M_{\ne 1})_\bullet$ are the induced filtrations on $H^{n - 1} (F_0)_{= 1}$ and $H^{n - 1} (F_0)_{\ne 1}$
respectively.
Let $N = \log T_u$ denote the nilpotent operator acting on $H^{n - 1} (F_0)$, and consider the induced action of $N$  on
$H^{n - 1} (F_0)_{= 1}$ and $H^{n - 1} (F_0)_{\ne 1}$.
Then $(M_{=1})_\bullet$ (respectively   $(M_{\ne 1})_\bullet$) is equal to the $N$-weight filtration centered at $n$ (respectively $n - 1$) (see Definition~\ref{d:weight}).
Denef and Loeser introduced the \define{motivic Milnor fiber} \cite{DLMotivic}, which specializes under the equivariant Hodge-Deligne map $E_{\hat{\mu}}$ to the equivariant Hodge-Deligne polynomial
$E(F_0, \hat{\mu}; u,v) \in \Z[\Q/\Z][u,v]$ associated to $F_0$ with the above mixed Hodge structure. Explicitly,
\[
E(F_0, \hat{\mu}; u,v) := \sum_{p,q} \sum_{\alpha \in \Q/\Z} \sum_m (-1)^m h^{p,q}(H^m(F_0))_\alpha \alpha u^p v^q.
\]

Recall that $\Gamma_+(f) = \NP(f) + \R_{\ge 0}^n$ denotes the \define{Newton polyhedron} of $f$, and $\Gamma_f$ denotes the union of the bounded faces of $\Gamma_+(f)$.
Recall that for every bounded face $Q$ of $\Gamma_+(f)$, $\Delta_Q$ denotes the convex hull of $Q$ and the origin. We assume that $f$ is convenient i.e.
$\Gamma_+(f)$ has non-zero intersection with each  ray through a coordinate vector. We assume that $f$ is \define{sch\"on at $0$},
meaning that  $\{ f|_Q =  0 \} \subseteq (\C^*)^n$ defines a smooth hypersurface whenever $Q$ is a bounded face of $\Gamma_+(f)$. Note that this is a weaker condition than the condition that $f$ is sch\"on in Example~\ref{e:0}.
Let $P_+$ be the union of $\{ \Delta_Q \mid Q \subseteq \Gamma_f \}$, and
let $\nu_0$ be the piecewise $\Q$-affine function on $P_+$ with value $1$ at the origin and value $0$ on $\Gamma_f$. Then the lattice polyhedral decomposition $\cS(\nu_0)$ of $P_+$ has cells
$\{ \Delta_Q \mid Q \subseteq \Gamma_f \}$ and $\{  Q \subseteq \Gamma_f \}$. Although we will not need this, we note that even though $P_+$ is not convex, one can apply all the combinatorial constructions and results of
Section~\ref{s:weighted}. With the notation of Example~\ref{e:0}, the following formula for the motivic Milnor fiber was given in \cite[Theorem~4.3]{MTMilnor} (cf. \eqref{e:mnf0}):
\[
\sum_{\emptyset \ne Q \subseteq \Gamma_f } (1 - \L)^{\dim \sigma(\Delta_Q) - \dim \Delta_Q} \big [ [V_{\Delta_Q}^\circ \circlearrowleft \hat{\mu}]  +  
 [V_{Q}^\circ](1 - \L) \big],
\]
where $\sigma(\Delta_Q)$ is the smallest face of $P_+$ containing $\Delta_Q$.
Specializing the above expression via the equivariant Hodge-Deligne map and applying Theorem~\ref{t:DKformula} yields the following formula for the equivariant Hodge-Deligne polynomial:
\[
uvE(F_0, \hat{\mu}; u,v)  = \sum_{\emptyset \ne  Q \subseteq \Gamma_f} (-1)^{\dim Q} (1 - uv)^{\dim \sigma(\Delta_Q) - \dim \Delta_Q}
\big[ h^*(\Delta_Q , \nu_0|_{\Delta_Q }; u,v) + (uv - 1) h^*(Q;u,v)  \big],
\]
where $h^*(Q;u,v)$ is the weighted limit mixed $h^*$-polynomial with respect to the convex graph that is identically zero.
This gives the following expression for $\sum_{p,q} \sum_{\alpha \in \Q/\Z}  h^{p,q}(H^{n - 1}(F_0))_\alpha \alpha u^p v^q $: 
\[
\sum_{Q \subseteq \Gamma_f} (-1)^{n - 1- \dim Q} (1 - uv)^{\dim \sigma(\Delta_Q) - \dim \Delta_Q}
\big[ h^*(\Delta_Q , \nu_0|_{\Delta_Q }; u,v) + (uv - 1) h^*(Q;u,v)  \big].
\]

\begin{remark}\label{r:var}
The formula below should be compared to \eqref{e:0eigen}.
Note that $E(F_0, \hat{\mu}; 1,1) =   \sum_{\alpha \in \Q/\Z} \sum_m (-1)^m \dim H^m(F_0)_\alpha \alpha$ determines the eigenvalues (with multiplicity)  of the action of monodromy.
Specializing the above formula at $u = v = 1$, and applying Example~\ref{e:volume}, gives the following formula:
\[
 \sum_{\alpha \in \Q/\Z} \dim H^{n - 1}(F_0)_\alpha \alpha = \sum_{  \substack{ Q \subseteq \Gamma_f \\ \dim \sigma(\Delta_Q) = \dim \Delta_Q}} (-1)^{n - 1 - \dim Q} \Vol(Q) \sum_{i = 0}^{m(Q) - 1} [i/m(Q)],
\]
where $\Vol(Q)$ is the normalized volume of $Q$ and $m(Q)$ is the lattice distance of $Q$ from the origin i.e. $m(Q)$ is
the minimal positive integer such that  $m(Q) \nu_0|_{\Delta_Q}$ is an affine 
function with respect to the lattice given by intersecting $M$ with the affine span of $\Delta_Q$. When $Q$ is empty,
$\Vol(Q) = m(Q) = 1$ and $\dim Q = -1$.
This is equivalent to  a famous  formula of Varchenko \cite{VarZeta} for the zeta function of monodromy of $F_0$. As in Remark~\ref{r:forget}, we obtain a  formula for $\dim H^{n - 1}(F_0)$ originally due to
Kouchnirenko \cite{Kou}.
\end{remark}

As in previous sections, expanding the definitions above, simplifying and using Remark~\ref{non-zero}, yields the following `non-negative' formulas for the equivariant mixed Hodge numbers of  $F_0$.
Let $\Delta^0$ be the simplex $P_+ \cap \{ x_1 + \cdots + x_n = \epsilon \}$ for
fixed $\epsilon > 0$ sufficiently small. Let $\cS_0$ denote the regular, rational polyhedral subdivision obtained by intersecting $\cS(\nu_0)$ with $\Delta^0$. That is, the cells of $\cS_0$ are
$\{ Q_0 := \Delta_Q \cap \Delta^0 \mid Q \subseteq \Gamma_f \}$.

\begin{theorem}\label{t:milnorformula}
Let  $f \in \C[x_1,\ldots, x_n]$  be a complex polynomial such that $f^{-1}(0)$ admits an isolated singularity at the origin. Assume further that $f$ is convenient and sch\"on at $0$. With the notation above, we have the following formulas for the
equivariant mixed Hodge numbers of the cohomology of the associated Milnor fiber $F_0$:
\begin{equation*}
uv \sum_{p,q} \sum_{ \substack{ \alpha \in \Q/\Z \\ \alpha \ne 1} } h^{p,q}( H^{n - 1} (F_0))_{\alpha} \alpha u^p v^q  = \sum_{Q \subseteq \Gamma_f}  v^{\dim \Delta_Q + 1}\lc(\Delta_Q , \nu_0|_{\Delta_Q }; uv^{-1})  l_{\Delta^0}(\cS_0,Q_0;uv), 
\end{equation*}
\begin{equation*}
\sum_{p,q} h^{p,q}( H^{n - 1} (F_0))_{1}  u^p v^q  =  \sum_{Q \subseteq \Gamma_f}
v^{\dim Q + 1} \lc(Q;uv^{-1}) l_{\Delta^0}(\cS_0,Q_0;uv), 
\end{equation*}
where the sum runs over all (possibly empty) bounded faces of the Newton polyhedron of $f$.
\end{theorem}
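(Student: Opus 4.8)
The plan is to run, in the local setting, the same argument that in Section~\ref{s:general} deduces Corollary~\ref{c:formula} from Theorem~\ref{t:mainhyper}: feed a motivic invariant through Theorem~\ref{t:DKformula} stratum by stratum, then carry out a purely combinatorial reorganization. The input here is not the motivic nearby fiber of a schön hypersurface over $\K$ (which we do not have, since $f$ is only assumed schön at $0$), but the Matsui--Takeuchi formula for the motivic Milnor fiber recorded just before the theorem. Its strata are the complex hypersurfaces $V_{\Delta_Q}^\circ$ and $V_Q^\circ$ indexed by bounded faces $Q$ of $\Gamma_+(f)$, and these are schön precisely because $f$ is schön at $0$, which is exactly what the hypothesis supplies for bounded faces. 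Applying the equivariant Hodge--Deligne map and Theorem~\ref{t:DKformula} to each stratum already yields the alternating-sum expression for $\sum_{p,q}\sum_{\alpha}h^{p,q}(H^{n-1}(F_0))_\alpha\alpha u^pv^q$ displayed immediately above the theorem; the content of Theorem~\ref{t:milnorformula} is to rewrite that expression in the asserted two-part form, split according to whether the eigenvalue $\alpha$ equals $1$.

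The first step is to separate off the eigenvalue $\alpha=1$. Since $h^*(Q;u,v)$ is the limit mixed $h^*$-polynomial for the trivial height function, its coefficients are all supported at $\alpha=1$; conversely, by Remark~\ref{non-zero} (applied with the $\Q$-affine function of value $1$ at the origin and $0$ on $Q$, which is exactly $\nu_0|_{\Delta_Q}$) the local weighted $h^*$-polynomial $\lc(\Delta_Q,\nu_0|_{\Delta_Q};u)$ has vanishing coefficient at $\alpha=1$, and $\lc(\Delta_\emptyset,\cdot\,;u)=0$. So after the reorganization described next, the cells $\Delta_Q$ will contribute only to the $\alpha\neq1$ part and the faces $Q\subseteq\Gamma_f$ only to the $\alpha=1$ part.

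To reach the $\lc$-polynomials I would feed the inversion machinery of Section~\ref{s:weighted} into the alternating sum: use Remark~\ref{r:decompose} and the mixed analogues in Theorem~\ref{t:combinatorics} to expand each $h^*(\Delta_Q,\nu_0|_{\Delta_Q};u,v)$ and $h^*(Q;u,v)$ over the cells of $\cS(\nu_0)$ they refine, and use the Eulerian-poset identity Theorem~\ref{t:Eulerianinverse} together with the $G(Q;t)$-type Euler relations on the coordinate-subspace faces of $\Gamma_+(f)$ to collapse the resulting double sums — these faces playing the role that the faces $P^S$ of $\Delta_{\NP(f)}$ play for the fibration at infinity. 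Finally, for each bounded face $Q$ one identifies the local $h$-polynomial attached to $\Delta_Q$ (a cone with apex the origin over $Q_0=\Delta_Q\cap\Delta^0$) and, after the collapse, the one attached to $Q$, with $l_{\Delta^0}(\cS_0,Q_0;uv)$, using the cone invariance of the local $h$-polynomial (the same cross-section identity used for $\cS_\infty$ in the passage from Remark~\ref{r:limit} to Example~\ref{e:infty}). Collecting the $\alpha\neq1$ contributions (those carrying $\lc(\Delta_Q,\nu_0|_{\Delta_Q};uv^{-1})$) and the $\alpha=1$ contributions (those carrying $\lc(Q;uv^{-1})$) then gives exactly the two displayed formulas.

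I expect the main obstacle to be carrying this through despite the non-convexity of $P_+$. Unlike the fibration at infinity, where the ambient polytope $\Delta_{\NP(f)}$ furnishes an Eulerian face lattice on which to run the inversion identities, here $P_+$ is merely the union of the cones $\Delta_Q$ over the bounded faces of $\Gamma_+(f)$ and is in general not convex, so there is no ambient polytope whose local $h$-polynomials $l_P(\cS,F;t)$ one can invoke directly, and the role of $\Delta^\infty$ must be taken over by $\Delta^0=P_+\cap\{x_1+\cdots+x_n=\epsilon\}$. The remedy is to observe that every combinatorial ingredient above — the inversion formulas of Section~\ref{s:weighted}, Theorem~\ref{t:Eulerianinverse}, the cone invariance of the local $h$-polynomial — is a statement about cells and links of the polyhedral complex $\cS(\nu_0)$ and about the polyhedron $\Gamma_+(f)$, hence survives the loss of convexity; one then checks that the Euler relations on the unbounded, coordinate-subspace faces of $\Gamma_+(f)$ collapse the auxiliary sums to precisely $l_{\Delta^0}(\cS_0,Q_0;uv)$, exactly as for $\Delta^\infty$ in Example~\ref{e:infty}.
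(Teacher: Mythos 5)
Your proposal is essentially correct and follows the same route as the paper. The paper's own proof is compressed into the two displayed formulas and the sentence immediately preceding the theorem (``As in previous sections, expanding the definitions above, simplifying and using Remark~\ref{non-zero}, yields the following\dots''); you have unpacked exactly what that sentence alludes to. In particular you correctly identify the starting point as the Matsui--Takeuchi motivic Milnor fiber formula (not Theorem~\ref{t:mainhyper} directly, since $f$ is only sch\"on at $0$), correctly note that sch\"onness at $0$ is precisely what makes the strata $V_{\Delta_Q}^\circ$, $V_Q^\circ$ sch\"on so that Theorem~\ref{t:DKformula} applies, and correctly flag Remark~\ref{non-zero} as what produces the split between $\alpha=1$ and $\alpha\ne 1$, with the $\lc(\Delta_Q,\nu_0|_{\Delta_Q};\cdot)$ terms carrying the $\alpha\ne1$ part and the $\lc(Q;\cdot)$ terms carrying the $\alpha=1$ part. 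The non-convexity caveat you raise is also one the paper itself raises (and dismisses on the same grounds: the relevant identities are local statements about cells and links of $\cS(\nu_0)$). The only place you are slightly more tentative than you need to be is the final collapse to $l_{\Delta^0}(\cS_0,Q_0;uv)$; but the paper offers no more detail there than you do, and your appeal to the cross-section identity already used for $\Delta^\infty$ in Example~\ref{e:infty} is the right one.
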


We note that an algorithm to compute the equivariant mixed Hodge numbers of the cohomology of the associated Milnor fiber $F_0$, as well as formulas in special cases,  were given by 
Matsui and
Takeuchi in  \cite{MTMilnor}, extending work of Danilov  \cite{DanNewton} and Tanab\'e \cite{TanCombinatorial}.

\begin{remark}
There is a striking symmetry between the formulas for monodromy at infinity and monodromy of the Milnor fiber in  Corollary~\ref{c:formula} and Theorem~\ref{t:milnorformula} respectively. The existence of
such a symmetry was first observed by Matsui and Takeuchi in \cite{MTMilnor}.
\end{remark}

The following corollary is immediate from the description of the weight filtration on the cohomology of $F_0$ and Definition~\ref{d:weight}, and should be compared to Corollary~\ref{c:jordaninfty}.
 For every bounded face $Q$ of $\Gamma_f$, it follows from Remark~\ref{r:non-negative} that we may write
\[ l_{\Delta^0}(\cS_0,Q_0;t) = \sum_{i = 0}^{\lfloor \frac{n - 1 - \dim Q }{2}\rfloor} \tilde{l}_{Q,i} t^i(1 + t + \cdots + t^{n - 1 - \dim Q - 2i}), \]
for some non-negative integers   $\{ \tilde{l}_{Q,i} \mid 0 \le i \le \lfloor \frac{n - 1 - \dim Q }{2}\rfloor \}$. We define
\[
\tilde{l}_{\Delta^0}(\cS_0,Q_0;t)  
:= \sum_{i = 0}^{\lfloor \frac{n - 1 - \dim Q }{2}\rfloor} \tilde{l}_{Q,i} t^i.
\]

\begin{corollary}\label{c:jordanMilnor}
Let  $f \in \C[x_1,\ldots, x_n]$  be a complex polynomial such that $f^{-1}(0)$ admits an isolated singularity at the origin. Assume further that $f$ is convenient and sch\"on at $0$.
Let $J^0_{k,\alpha}$ be the number of Jordan blocks of size $k$ with eigenvalue $\alpha$ for the action of monodromy on the cohomology of the Milnor fiber
 $H^{n - 1} (F_0)$. Then
 \[
\sum_{ \substack{ \alpha \in \Q/\Z \\ \alpha \ne 1} } \sum_k J^0_{n - k,\alpha} \alpha u^{k + 2} =  \sum_{Q \subseteq \Gamma_f} u^{\dim \Delta_Q + 1}\lc(\Delta_Q, \nu_0|_{\Delta_Q}; 1)
 \tilde{l}_{\Delta^0}(\cS_0,Q_0;u^2), 
\]
\[
\sum_k J^0_{n - 1 - k,1} u^{k + 2} =   \sum_{Q \subseteq \Gamma_f} u^{\dim Q + 1}\lc(Q; 1)
 \tilde{l}_{\Delta^0}(\cS_0,Q_0;u^2), 
\]
where the sum runs over all (possibly empty) bounded faces of the Newton polyhedron of $f$.
\end{corollary}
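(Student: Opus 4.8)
The plan is to obtain Corollary~\ref{c:jordanMilnor} as a formal consequence of Theorem~\ref{t:milnorformula}, the description of the monodromy weight filtration on $H^{n-1}(F_0)$ recalled in Section~\ref{s:Milnor} (that $(M_{=1})_\bullet$, resp.\ $(M_{\ne1})_\bullet$, is the $N$-weight filtration centered at $n$, resp.\ $n-1$), Definition~\ref{d:weight}, and the symmetry/unimodality statement of Remark~\ref{r:non-negative}. First I would set $v=u$ in the two formulas of Theorem~\ref{t:milnorformula}. Since $h^{p,q}(H^{n-1}(F_0))_\alpha = \dim \Gr_F^p \Gr^M_{p+q}(H^{n-1}(F_0))_\alpha$, the left-hand sides become, up to an explicit power of $u$, the equivariant Poincaré polynomials of the associated graded of the monodromy weight filtration: for $\alpha\ne1$ one gets $u^2\sum_w\big(\sum_{\alpha\ne1}\dim\Gr^M_w(H^{n-1}(F_0))_\alpha\,\alpha\big)u^w$, and for $\alpha=1$ one gets $\sum_w \dim\Gr^M_w(H^{n-1}(F_0))_{=1}\,u^w$.

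Next I would convert weight-graded dimensions into Jordan block counts. By the stated weight filtration description and the Lefschetz $\mathfrak{sl}_2$-decomposition underlying Definition~\ref{d:weight}, a Jordan block of $T_u$ of size $k$ acting on $H^{n-1}(F_0)_{\ne1}$ (resp.\ $H^{n-1}(F_0)_{=1}$) contributes exactly one dimension to each of $\Gr^M_{n-k},\Gr^M_{n-k+2},\dots,\Gr^M_{n+k-2}$ (resp.\ $\Gr^M_{n+1-k},\dots,\Gr^M_{n-1+k}$); equivalently it contributes the symmetric unimodal building block $u^{n-k}(1+u^2+\cdots+u^{2k-2})$ (resp.\ $u^{n+1-k}(1+u^2+\cdots+u^{2k-2})$), centered at $n-1$ (resp.\ $n$) of radius $k-1$. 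Thus the $v=u$ specializations of the left-hand sides of Theorem~\ref{t:milnorformula} equal $\sum_k\big(\sum_{\alpha\ne1}J^0_{k,\alpha}\,\alpha\big)u^{n-k+2}(1+u^2+\cdots+u^{2k-2})$ and $\sum_k J^0_{k,1}\,u^{n+1-k}(1+u^2+\cdots+u^{2k-2})$ respectively. On the combinatorial side, Remark~\ref{r:non-negative} says $l_{\Delta^0}(\cS_0,Q_0;t)$ is symmetric with non-negative, unimodal coefficients, so by the elementary fact that such a polynomial has a unique expansion $\sum_i \tilde l_{Q,i}\,t^i(1+t+\cdots+t^{n-1-\dim Q-2i})$ with all $\tilde l_{Q,i}\ge 0$, one recovers the polynomials $\tilde l_{\Delta^0}(\cS_0,Q_0;t)=\sum_i \tilde l_{Q,i}t^i$ of the statement. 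Substituting $t=u^2$ into the right-hand sides of Theorem~\ref{t:milnorformula} and expanding, these too become explicit non-negative combinations of the same building blocks, where one uses $\dim\Delta_Q=\dim Q+1$ (equivalently that $l_{\Delta^0}(\cS_0,Q_0;t)$ has degree $n-1-\dim Q$).

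Finally I would match coefficients. In each of the two formulas every building block occurring is centered at one fixed value ($n+1$ after the extra factor $u^2$ in the $\alpha\ne1$ case, $n$ in the $\alpha=1$ case), and these building blocks are linearly independent as polynomials (distinct top degrees), so the two expressions for each left-hand side must agree block by block. Tracking which pair $(Q,i)$ produces a block of given radius $b$ (one finds $b=n-1-\dim Q-2i$) and reindexing the Jordan-block sum by block size, the block-by-block identity becomes exactly the asserted formula once one collects the powers $u^{\dim\Delta_Q+1+2i}$ (resp.\ $u^{\dim Q+1+2i}$) against the matching power $u^{k+2}$. The main point needing care is this last bookkeeping of centers, radii and exponents, together with the elementary symmetric-unimodal decomposition lemma; all the substantive input — the computation of $H^{n-1}(F_0)$ through the motivic nearby fiber via Theorem~\ref{t:comp}, Theorem~\ref{t:mainhyper} and Theorem~\ref{t:DKformula} — has already been carried out, so the remaining argument is a formal unwinding.
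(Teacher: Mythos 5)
Your argument is correct and is exactly the paper's intended proof: the paper labels the corollary "immediate from the description of the weight filtration on the cohomology of $F_0$ and Definition~\ref{d:weight}," and what you wrote out is the explicit unwinding — specialize Theorem~\ref{t:milnorformula} at $v=u$, decompose the resulting weight-graded Poincar\'e polynomials into symmetric unimodal blocks centered at $n+1$ (resp.\ $n$), and match them against the unique decomposition of $l_{\Delta^0}(\cS_0,Q_0;u^2)$ into blocks $\tilde l_{Q,i}u^{2i}(1+u^2+\cdots)$ supplied by Remark~\ref{r:non-negative} — which is precisely the bookkeeping carried out for the parallel Corollary~\ref{c:jordaninfty}.
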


\begin{example}\label{e:firsttermsmilnor}
We have the following analogue of Example~\ref{e:firstterms}.
Let $\partial \R^n_{\ge 0}$ denote the intersection of $\R^n_{\ge 0}$ with the union of the coordinate hyperplances.
Then we have the following formulas for the number of Jordan blocks with  the largest and second largest possible size:
\[
\sum_{ \substack{ \alpha \in \Q/\Z \\ \alpha \ne 1} } J^0_{n,\alpha} \alpha = \sum_{\substack{Q \subseteq \Gamma_f, Q \nsubseteq \partial \R^n_{\ge 0}
\\ \dim Q = 0}}  \sum_{v \in \Int(\Delta_Q) \cap \Z^n} w(v),
\]
\[
\sum_{ \substack{ \alpha \in \Q/\Z \\ \alpha \ne 1} } J^0_{n - 1,\alpha} \alpha =  \sum_{\substack{Q \subseteq \Gamma_f, Q \nsubseteq \partial \R^n_{\ge 0}
\\ \dim Q = 1}}  \sum_{v \in \Int(\Delta_Q) \cap \Z^n} w(v) + \bar{w(v)},
\]
\[
J^0_{n - 1,1} = \sum_{ \substack{ Q \subseteq \Gamma_f, Q \nsubseteq \partial \R^n_{\ge 0}\\ \dim Q \le 1}  }   \#( \Int(Q) \cap \Z^n),
\]
\[
J^0_{n - 2,1} = 2 \cdot  \sum_{ \substack{ Q \subseteq  \Gamma_f, Q \nsubseteq \partial \R^n_{\ge 0}\\ \dim Q = 2}  }   \#( \Int(Q) \cap \Z^n).
\]
This reproves the main results of \cite{MTMilnor}. Namely,  the first two equations are equivalent to \cite[Theorem~1.1]{MTMilnor}, and the second two equations are equivalent to  \cite[Theorem~1.2]{MTMilnor}. Also, the third equation is originally due to van Doorn and Steenbrink \cite{DSSupplement}.
\end{example}

\begin{example}
We have the following analogue of Example~\ref{e:infty2}. Consider the case when $n = 2$.
 Then $J^0_{1,1} = \#(\Gamma_f \cap \Z^{2}_{> 0})$, and
\[
\sum_{ \substack{ \alpha \in \Q/\Z \\ \alpha \ne 1} } J^0_{2,\alpha} \alpha =  \sum_{\substack{Q \subseteq \Gamma_f, Q \nsubseteq \partial \R^2_{\ge 0}
\\ \dim Q = 0}}  \sum_{v \in \Int(\Delta_Q) \cap \Z^2} w(v),
\]
\[
\sum_{ \substack{ \alpha \in \Q/\Z \\ \alpha \ne 1} } J^0_{1,\alpha} \alpha =  \sum_{\substack{Q \subseteq \Gamma_f
\\ \dim Q = 1}}  \sum_{v \in \Int(\Delta_Q) \cap \Z^2} w(v) + \bar{w(v)}.
\]
\end{example}

\begin{example}
We have the following analogue of Example~\ref{e:simpleagain2}.
Following on with Example~\ref{e:simpleagain}, if  $f  = \sum_{i  = 1}^n 
x_i^{m_i} \in  \C[x_1,\ldots,x_n]$, for some  
$m_i \in \Z_{>0}$, then the action of monodromy is semi-simple, and all Jordan blocks have size $1$. With the notation of Example~\ref{e:simpleagain},
\[
\sum_{ \alpha \in \Q/\Z } J^0_{1,\alpha} \alpha  = \sum_{(i_1, \ldots i_n) \in [1,m_1 - 1] \times \cdots \times [1,m_n - 1]} [ \nu_\infty(i_1,\ldots,i_n)] \in \Z[\Q/\Z].
\]
\end{example}

\begin{example}
The following should be compared to  Example~\ref{e:running}.
As in Example~\ref{e:double}, let $f = a_0 x^4 + a_1 x^5 + a_2 x^4y^2 + a_3 xy^5 + a_4 y^5 + a_5 xy^2 + a_6 x^2y \in \C[x,y]$ for a general choice of  $a_0, \ldots, a_6 \in \C^*$.
The action of monodromy on  $H^{1} (F_0)$ is semi-simple and has $4$ Jordan blocks of size $1$, $2$ with eigenvalue $1$ and $1$ with eigenvalues
 $\exp(2\pi \sqrt{-1}/3)$ and $\exp(4\pi \sqrt{-1}/3)$ respectively.
\end{example}

\bibliographystyle{amsplain}
\def\cprime{$'$}
\providecommand{\bysame}{\leavevmode\hbox to3em{\hrulefill}\thinspace}
\providecommand{\MR}{\relax\ifhmode\unskip\space\fi MR }
\providecommand{\MRhref}[2]{%
  \href{http://www.ams.org/mathscinet-getitem?mr=#1}{#2}
}
\providecommand{\href}[2]{#2}

\end{document}